\newtheorem*{thma}{ \bf Theorem \ref{purity-br-loc}}
\begin{document}
% end header

\newcommand{\qr}[1]{\eqref{#1}} 
\newcommand{\var}[2]{\operatorname{var}_{#1}^{#2}}
\newcommand{\secref}[1]{\ref{#1}}
\newcommand{\varn}[1]{\var{#1}{}}

\newcommand{\ie}{{\it i.e.}\xspace} 
\newcommand{\eg}{{\it e.g.}\xspace}
\newcommand{\nc}{\newcommand}
\renewcommand{\frak}{\mathfrak}
\providecommand{\cal}{\mathcal}

\renewcommand{\bold}{\mathbf}
%
%\def\descriptionlabel#1{\hspace\labelsep \normalshape\bf #1}
%
%\renewcommand{\labelitemi}{---}
%\renewcommand{\labelenumi}{(\arabic{enumi})}
%%%%%
%
% Different parentheses
%
\newcommand{\gparens}[3]{{\left#1 #2 \right#3}}
\newcommand{\parens}[1]{\gparens({#1})} 
\newcommand{\brackets}[1]{\gparens\{{#1}\}} 
\newcommand{\hakparens}[1]{\gparens[{#1}]} 
\newcommand{\angleparens}[1]{\gparens\langle{#1}\rangle} 
\newcommand{\floor}[1]{\gparens\lfloor{#1}\rfloor} 
\newcommand{\ceil}[1]{\gparens\lceil{#1}\rceil}
\newcommand*{\Setof}[1]{\,\brackets{#1}\,} 
\newcommand*{\norm}[2][]{\gparens\|{#2}\|#1}
\newcommand*{\normsq}[2][]{\gparens\|{#2}\|^2#1}
%

%% omgivningar f{\"o}r satser o dyl
\numberwithin{equation}{section}

\newcommand{\theoremname}{Theorem.}
\newcommand{\corollaryname}{Corollary.}
\newcommand{\lemmaname}{Lemma.}
\newcommand{\propositionname}{Proposition.}
\newcommand{\conjecturename}{Conjecture.}
\newcommand{\definitionname}{Definition.}
\newcommand{\examplename}{Example.}
\newcommand{\remarkname}{Remark.}
\newcommand{\pfname}{Proof.}

\newenvironment{pf}{\vskip-\lastskip\vskip\medskipamount{\it\pfname}}%
                      {$\square$\vskip\medskipamount\par}

\newenvironment{pfof}[1]{\vskip-\lastskip\vskip\medskipamount{\it
    Proof of #1.}}%
                      {$\square$\vskip\medskipamount\par}

\newtheorem{thm}{Theorem}[section]
\newtheorem{theorem}[thm]{Theorem}%[subsection]

\newtheorem{thmnn}{Theorem}
\renewcommand{\thmnn}{}

\newtheorem{cor}[thm]{Corollary}%[subsection]
\newtheorem{corollary}[thm]{Corollary}%[subsection]
\newtheorem{cornn}{Corollary}
 \renewcommand{\thecornn}{}

\newtheorem{prop}[thm]{Proposition}%[subsection]
\newtheorem{proposition}[thm]{Proposition}%[subsection]

\newtheorem{propnn}{Proposition}
\renewcommand{\thepropnn}{}

\newtheorem{lemma}[thm]{Lemma} %[subsection]
\newtheorem{lemmat}[thm]{Lemma}%[subsection]
\newtheorem{lemmann}{Lemma}
\renewcommand{\thelemmann}{}

\theoremstyle{definition}
\newtheorem{defn}[thm]{Definition}%[subsection]
\newtheorem{definition}[thm]{Definition}%[subsection]
\newtheorem{defnn}{Definition}
\renewcommand{\thedefnn}{}
\newtheorem{conj}[thm]{Conjecture}
\newtheorem{axiom}{Axiom}

\theoremstyle{definition}
\newtheorem{exercise}{Exercise}[subsection]
\newtheorem{exercisenn}{Exercise}
\renewcommand{\theexercisenn}{}
\newtheorem{remark}[thm]{Remark}%[subsection]
\newtheorem{remarks}[thm]{Remarks}%[subsection]
\newtheorem{remarknn}{Remark}
\renewcommand{\theremarknn}{}
\newtheorem{example}[thm]{Example} %[subsection]
\newtheorem{examples}[thm]{Examples} %[subsection]
\newtheorem{examplenn}{Example}[subsection]
\newtheorem{blanktheorem}{}[subsection]
\newtheorem{question}[thm]{Question}%[subsection]
\renewcommand{\theexamplenn}{}

%kommandon f{\"o}r att referera till satser o.dyl

\nc{\Theorem}[1]{Theorem~{#1}}
\nc{\Th}[1]{({\sl Th.}~#1)}
\nc{\Thd}[2]{({\sl Th.}~{#1} {#2})}
\nc{\Theorems}[2]{Theorems~{#1} and ~{#2}}
\nc{\Thms}[2]{({\it Thms. ~{#1} and ~{#2}})}
\nc{\Lemmas}[2]{Lemma~{#1} and ~{#2}}

\nc{\manga}[6]{({\it Thms. ~ #1, ~ #2, ~ #3,\\ ~ #4, ~ #5, ~ #6})}

\nc{\Prop}[1]{({\sl Prop.}~{#1})}
\nc{\Proposition}[1]{Proposition~{#1}}
\nc{\Propositions}[2]{Propositions~{#1} and ~{#2}}
\nc{\Props}[2]{({\sl Props.}~{#1} and ~{#1})}
\nc{\Cor}[1]{({\sl Cor.}~{#1})}
\nc{\Corollary}[1]{Corollary~{#1}}
\nc{\Corollaries}[2]{Corollaries~{#1} and ~{#2}}
\nc{\Definition}[1]{Definition~{#1}}
\nc{\Defn}[1]{({\sl Def.}~{#1})}
\nc{\Lemma}[1]{Lemma~{#1}} 
\nc{\Lem}[1]{({\sl Lem.} ~{#1})} 
\nc{\Eq}[1]{equation~({#1})}
\nc{\Equation}[1]{Equation~({#1})}
\nc{\Section}[1]{Section~{#1}}
\nc{\Sections}[1]{Sections~{#1}}
\nc{\Sec}[1]{({\sl Sec.} ~{#1})} 
\nc{\Chapter}[1]{Chapter~{#1}}
\nc{\Chapt}[1]{({\sl Ch.}~{#1})}

\nc{\Ex}[1]{{\sl Ex.}~{#1}}
\nc{\Exa}[1]{{\sl Example}~{#1}}
\nc{\Example}[1]{{\sl Example}~{#1}}
\nc{\Examples}[1]{{\sl Examples}~{#1}}
\nc{\Exercise}[1]{{\sl Exercise}~{#1}}

\nc{\Rem}[1]{({\sl Rem.}~{#1})}
\nc{\Remark}[1]{{\sl Remark}~{#1}}
\nc{\Remarks}[1]{{\sl Remarks}~{#1}}
\nc{\Note}[1]{{\sl Note}~{#1}}

\nc{\Conjecture}[1]{Conjecture~{#1}}
\nc{\Claim}[1]{Claim~{#1}}

\nc \Proof{{  \it Proof. }}

%%Grekiska
\nc{\xmu}{\mu}
\nc{\w}{\omega}
%%Grekisk feta
\nc{\xv}{\mbox{\boldmath$x$}}
\nc{\uv}{\mbox{\boldmath$u$}}
\nc{\xiv}{\mbox{\boldmath$\xi$}}
\nc{\bbeta}{\mbox{\boldmath$\beta$}}
\nc{\balpha}{\mbox{\boldmath$\alpha$}}
\nc{\bgamma}{\mbox{\boldmath$\gamma$}}
\nc{\bdelta}{\mbox{\boldmath$\delta$}}
\nc{\bepsilon}{\mbox{\boldmath$\epsilon$}}
%%Feta

\newcommand{\ZZ}{{\mathbb Z}}
\newcommand{\RR}{{\mathbb R}} 

\nc \Ab{{\ensuremath{\bold A}}}
\nc \ab{{\ensuremath{\bold a}}}
\nc \bb{{\ensuremath{\bold b}}}
\nc \cb{{\ensuremath{\bold c}}}
\nc \Bb{{\ensuremath{\bold B}}}
\nc \Gb{{\ensuremath{\bold G}}}
\nc \Qb{{\ensuremath{\bold Q}}}
\nc \Rb{{\ensuremath{\bold R}}} \nc \Cb{{\ensuremath{\bold C}}} 
\nc \Eb{{\ensuremath{\bold E}}}
\nc \eb{{\ensuremath{\bold e}}}
\nc \Db{{\ensuremath{\bold D}}}
\nc \Fb{{\ensuremath{\bold F}}}
\nc \ib{{\ensuremath{\bold i}}}
\nc \jb{{\ensuremath{\bold j}}}
\nc \kb{{\ensuremath{\bold k}}}
\nc \nb{{\ensuremath{\bold n}}}
\nc \rb{{\ensuremath{\bold r}}}
\nc \Pb{{\ensuremath{\bold P}}}
\nc \pb{{\ensuremath{\bold p}}}
\nc \SPb{{\ensuremath{\bold {SP}}}}
\nc \Zb{{\ensuremath{\bold Z}}} 
\nc \zb{{\ensuremath{\bold z}}} 
\nc \gb{{\ensuremath{\bold g}}} 
\nc \fb{{\ensuremath{\bold f}}} 
\nc \ub{{\ensuremath{\bold u}}} 
\nc \vb{{\ensuremath{\bold v}}} 
\nc \yb{{\ensuremath{\bold y}}} 
\nc \xb{{\ensuremath{\bold x}}} 
\nc \xib{{\ensuremath{\bold \xi}}} 
\nc \Nb{{\ensuremath{\bold N}}} 
\nc \Hb{{\ensuremath{\bold H}}} 
\nc \wb{{\ensuremath{\bold w}}} 
\nc \Wb{{\ensuremath{\bold W}}} 
\nc \syz{{\mathbf {syz}}}
\nc \bnoll{{\ensuremath{\bold 0}}} 
%%frakturer:

\nc \mf{\frak m} \nc \mh{\hat{\m}} 
\nc \nf{\frak n}
\nc \Of{\frak O}
\nc \rf{\frak r}
\nc \mufr{{\mathbf \mu}}
\nc \hf{\frak h} 
\nc \qf{\frak q} 
\nc \bfr{\frak b} 
\nc \kfr{\frak k} 
\nc \pfr{\frak p} 
\nc \af{\frak a }
\nc \cf{\frak c }
\nc \sfr{\frak s} 
\nc \ufr{\frak u} 
\nc \g{\frak g} 
\nc \gA{\g_{\Ao}} 
\nc \lfr{\frak l}
\nc \afr{\frak a}
\nc \gfh{\hat {\frak g}}
\nc \gl{\frak { gl }}
\nc \Sl{\frak {sl}}
\nc \SU{\frak {SU}}
\nc{\Homf}{\frak{Hom}}

%%operatorer
%\newcommand{\on}{\mathrm}
\newcommand{\on}{\operatorname}
\nc\hankel{\on {Hankel}}
\nc\row{\on {row\ }}
\nc\nullity{\on {nullity }}
\nc\col{\on {col\ }}
\nc\rowm{\on {Row \ }}
\nc\loc{\on {lc \ }}
\nc\nullo{\on {null\ }}
\nc\Nul{\on {Nul\ }}
\nc \Ann {\on {Ann }}
\nc \Ass {\on {Ass \ }}
\nc \Coker {\on {Coker}}
\nc \Co{\on C}
\nc \Homo{\on {Hom}}
\nc \Ker {\on {Ker}}
\nc \omod{\on {mod}}
\nc \No {\on N}
\nc \NN {\on {NN}}
\nc \NGo {\on {NG}}
\nc \Oo {\on O}
\nc \ch {\on {ch}}
\nc \rko {\on {rk}}
\nc \Sing {\on {Sing\ }}
\nc \Reg {\on {Reg}}
\nc \CoI {\on {CI}}
\nc \CoM {\on {CM}}
\nc \Gor {\on {Gor}}
\nc \Type {\on {Type}}
\nc \can {\on {can}}
\nc \Top {\on {T}}
\nc \Tr {\on {Tr}}
\nc \rel {\on {rel}}
\nc \tr {\on {tr}}
\nc \sgn {\on {sgn }}
\nc \trdeg {\on {tr.deg}}
\nc \codim {\on {codim }}
\nc \coht {\on {coht}}
\nc \divo {\on {div \ }}
\nc \coh {\on {coh}}
\nc \Clo {\on {Cl}}
\nc \embdim{\on {embdim}}
\nc \ed{\on {ed}}
\nc \embcodim{\on {embcodim  }}
\nc \qcoh {\on {qcoh}}
\nc \grad {\on {grad}\ }
\nc \grade {\on {grade}}
\nc \hto {\on {ht}}
\nc \depth {\on {depth}}
\nc \prof {\on {prof}}
\nc \reso{\on {res}}
\nc \ind{\on {ind}}
\nc \prodo{\on {prod}}
\nc \coind{\on {coind}}
\nc \Con{\on {Con}}
\nc \Crit{\on {Crit}}
\nc \Der{\on {Der}}
\nc \Char{\on {Char}}
\nc \Ch{\on {Ch}}

\nc \Ext{\on {Ext}}
\nc \Eo{\on {E}}
\nc \End{\on {End}}
\nc \ad{\on {ad}}
\nc \Ad{\on {Ad}}
\nc \gr{\on {gr}}
\nc \Fo{\on {F}}
\nc \Gr{\on {Gr}}
\nc \Go{\on {G}}
\nc \GFo{\on {GF}}
\nc \Glo{\on {Gl}}
\nc \PGlo{\on {PGl}}
\nc \Ho{\on {H}}
\nc \CMo{\on {\CM}}
\nc \SCM{\on {SCM}}
\nc \hol{\on {hol}}
\nc{\sgd}{\on{sgd}}
\nc \supp{\on {supp}}
\nc \ssupp{\on {s-supp}}
\nc \singsupp{\on {singsupp}}
\nc \msupp{\on {msupp}}
%\nc \Spec{\on {Spec \ }}
\nc \spec{\on {spec}}
\nc \spano{\on {span }}
\nc \Span{\on {Span }}
\nc \Max{\on {Max}}
\nc \Mat{\on {Mat}}
\nc \Min{\on {Min}}
%\nc \max{\on {max}}
\nc \nil{\on {nil}}
\nc \Mod{\on {Mod}}
\nc \Rad {\on {Rad}}
\nc \rad {\on {rad}}
\nc \rank {\on {rank}}
\nc \range {\on {range}}
\nc \Slo{\on {SL}}
\nc \soc {\on {soc}}
\nc \Irr {\on {Irr}}
\nc \Reo {\on {Re}}
\nc \Imo {\on {Im}}
\nc \SSo{\on {SS}}
\nc \lub{\on {lub}}
%\nc \SS{\on {SS}}
%\nc \sup{\on {sup}}
\nc \gldim{\on {gl.d.}}
\nc \length{\on {length}}
\nc \pdo{\on {p.d.}} 
\nc \fdo{\on {f.d.}} 
\nc \ido{\on {i.d.}} 
\nc \dSSo{\dot {\SSo}}
\nc \So{\on S}
%\nc \I{\on I}
\nc \Io{\on I}
\nc \Jo{\on J}
\nc \jo{\on j}
\nc \Ko{\on K}
\nc \PBW{\Ac_{PBW}}
\nc \Ro{\on R}
\nc \To{\on T}
\nc \Ao{\on A}

\nc \Do{{\on D}}
\nc \Bo{\on B}
\nc \Po{\on P}
\nc \Qo{\on Q}
\nc \Zo{\on Z}
\nc \U{\on U}
\nc \wt{\on {wt}}
\nc \Uh{\hat {\U}}
\nc \T{\on T}
\nc \Lo{\on L}
\nc{\dop}{\on d}
\nc{\eo}{\on e}
\nc{\ado}{\on{ad}}
\nc{\Tot}{\on{Tot}}
\nc{\Aut}{\on{Aut}}
\nc{\sinc}{\on {sinc}}

%%Sammansatta
%
%\nc \ro#1{\rho_{{\O}_{#1}}}
%
%\nc \E{\hat\Cal E} \nc\Ea#1{\hat\Cal E^{#1}}
%\nc \Er#1{\hat\Cal E_{#1}}
%\nc\Et#1#2{\hat\Cal E_{{#1} \to {#2}}} 
%\nc\Di#1#2{\Cal D_{{#1} \to {#2}}} 
%\nc \bt{\text{\tilde {\text{\fr b}}}}
\nc{\overrightleftarrows}[2]{\overset{#1}{\underset{#2}{\rightleftarrows}}}
%%%%%

%\nc{\boxtimes}{\fbox{$\times$}}
\nc{\CCF}{\cal{CF}}
%%\nc{\CDer}{\cal{D}er}
\nc{\CDF}{\cal{DF}}
\nc{\CHC}{\check{\cal C}}

\nc{\Cone}{\on{Cone}}
\nc{\dec}{\on{dec}}
%\nc{\Der}{\cal{D}er}
\nc{\Diff}{\on{Diff}}
%\nc{\Diff}{\cal{D}\mbox{\it iff}}
\nc{\dirlim}{\underset{\to}{\on{lim}}}
\nc{\dpar}{\partial}
\nc{\GL}{\on{GL}}
\nc{\CGr}{\cal{G}r}
%%\nc{\gr}{\on{gr}}
\nc{\pr}{\on{pr}}
\nc{\semid}{|\!\!\!\times}
\nc{\Hom}{\on{Hom}}
\nc \RHom{\on {RHom}}

\nc \Proj{\mathrm {Proj\ }}
\nc \proj{\mathrm {proj}}
\nc{\Id}{\on{Id}}
\nc{\id}{\on{id}}
\nc{\Ima}{\on{Im}}
\nc{\invtimes}{\underset{\gets}{\otimes}}
\nc{\invlim}{\underset{\gets}{\on{lim}}}
\nc{\Lie}{\on{Lie}}
%\nc{\res}{\on{res}}
\nc{\re}{\on{Re }}
\nc{\Pic}{\on{Pic }}
\nc{\LPic}{\on{LPic }}
\nc{\Sch}{\on{Sch}}
\nc{\Sh}{\on{Sh}}
\nc{\Set}{\on{Set}}
\nc{\spo}{\on{sp\  }}
\nc{\Spec}{\on{Spec}}
\nc{\mSpec}{\on{mSpec}}
\nc{\Specb}{\bold {Spec}\ }
\nc{\Projb}{\bold {Proj}}
\nc{\Specan}{\on{Specan}}
\nc{\Spo}{\on{Sp}}
\nc{\Spf}{\on{Spf}}
\nc{\sym}{\on{sym}}
\nc{\symm}{\on{symm}}
\nc{\rop}{\on{r}}
\nc{\Td}{\on{Td}}
%\nc{\Th}{\mbox{\bf T}}
\nc{\Tor}{\on{Tor}}

% categories

\nc{\Artin}{\cal{A}rtin}
\nc{\Dgcoalg}{\cal{D}gcoalg}
\nc{\Dglie}{\cal{D}glie}
\nc{\Ens}{\cal{E}ns}
\nc{\Fsch}{\cal{F}sch}
%\nc{\Gr}{\cal{G}r}
\nc{\Groupoids}{\cal{G}roupoids}
\nc{\Holie}{\cal{H}olie}
%\nc{\Mod}{\cal{M}od}
\nc{\Mor}{\cal{M}or}
%\nc{\Sch}{\cal{S}ch}

% Kalligrafiska bokst{\"a}ver
\nc{\CF}{\ensuremath{\cal{F}}}
\nc \Kc{{\ensuremath{\cal K}}}
\nc \Lc{{\ensuremath{\cal L}}}
\nc \lcc{{\mathcal l}} 
\nc \CC{{\ensuremath{\cal C}}} 
\nc \Cc{{\ensuremath {\cal C}}}
\nc \Pc{{\ensuremath{\cal P}}}
%\nc{\Dc}[1]{{\ensuremath{{\mathcal D}_{#1}}}}
\nc \Dc{\ensuremath{\mathcal D}}
\nc \Ac{{\ensuremath{\cal A}}} 
\nc \Bc{{\ensuremath{\cal B}}}
\nc \Ec{{\ensuremath{\cal E}}}
\nc \Fc{{\ensuremath{\cal F}}}
\nc \Mcc{{\ensuremath{\cal M}}} 
\nc \hM{\hat{\Mcc}} 
\nc \bM{\bar {\Mcc}} 
\nc\hbM{\hat{\bar \Mcc}}  
\nc \Nc{{\ensuremath{\cal N}}}
\nc \Hc{{\ensuremath{\cal H}}} 
\nc \Ic{{\ensuremath{\cal I}}} 
%\nc{\Oc_}[1]{{\ensuremath{{\mathcal O}{#1}}}}
\nc \Oc{\ensuremath{{\cal O}}}
\nc \qc{\ensuremath{{\Cal q}}}
\nc \Och{\hat{\cal O}} 
\nc \Sc{{\ensuremath{{\cal S}}}}
\nc \Tc{\ensuremath{{\cal T}}} 
\nc \Vc{{\ensuremath{{\cal V}}}} 
\nc{\CA}{{\ensuremath{{\cal A}}}}
\nc{\CB}{{\ensuremath{{\cal B}}}}
%\nc{\CC}{{\ensuremath{\cal C}}}
\nc{\C}{{\ensuremath{{\cal F}}}}
\nc{\Gc}{{\ensuremath{{\cal G}}}}
\nc{\CH}{\ensuremath{\mathcal H}}
\nc{\CI}{{\ensuremath{{\cal I}}}}
\nc{\CM}{{\ensuremath{{\cal M}}}}
\nc{\CN}{{\ensuremath{{\cal N}}}}
\nc{\CO}{{\ensuremath{{\cal O}}}}
%\nc{\CP}{{\ensuremath{\cal P}}}
\nc{\Rc}{{\ensuremath{{\cal R}}}}
\nc{\CT}{{\ensuremath{\mathcal T}}}
\nc{\CU}{\ensuremath{{\cal U}}}
\nc{\CV}{\ensuremath{{\cal V}}}
\nc{\CZ}{\ensuremath{{\cal Z}}}
\nc{\Homc}{\ensuremath{{\cal {Hom}}}}

% gotiska boksta{\"a}ver

\nc{\fa}{\frak{a}}
\nc{\fA}{\frak{A}}
\nc{\fg}{\frak{g}}
\nc{\fh}{\frak{h}}
\nc{\fI}{\frak{I}}
\nc{\fK}{\frak{K}}
\nc{\fm}{\frak{m}}
\nc{\fP}{\frak{P}}
\nc{\fS}{\frak{S}}
\nc{\ft}{\frak{t}}
\nc{\fX}{\frak{X}}
\nc{\fY}{\frak{Y}}
% andra bokst{\"a}ver

\nc{\bF}{\bar{F}}
\nc{\bCP}{\bar{\cal{P}}}
\nc{\bm}{\mbox{\bf{m}}}
\nc{\bT}{\mbox{\bf{T}}}
\nc{\hB}{\hat{B}}
\nc{\hC}{\hat{C}}
\nc{\hP}{\hat{P}}
\nc{\htest}{\hat P}

% diverse symboler

\nc{\nen}{\newenvironment}
\nc{\ol}{\overline}
\nc{\ul}{\underline}
\nc{\ra}{\to}
\nc{\lla}{\longleftarrow}
\nc{\lra}{\longrightarrow}
\nc{\Lra}{\Longrightarrow}
\nc{\Lla}{\Longleftarrow}
\nc{\Llra}{\Longleftrightarrow}
\nc{\hra}{\hookrightarrow}
\nc{\iso}{\overset{\sim}{\lra}}

%stilar
\nc{\dsize}{\displaystyle}
\nc{\sst}{\scriptstyle}
\nc{\tsize}{\textstyle}
\nen{exa}[1]{\label{#1}{\bf Example.\ } }{}

% environments for rem,note, ack

\nen{rem}[1]{\label{#1}{\em Remark.\ } }{}
\nen{note}[1]{\label{#1}{\em Note.\ } }{}

\title{Purity of branch and critical locus} \author{Rolf
  K\"allstr\"om} \address{ Department of Mathematics,University of
  G\"avle, 100 78 G\"avle, Sweden} \date{\today} \email{rkm@hig.se}

\footnotetext[1]{2000 Mathematics Subject Classification: {Primary:
    14A10, 32C38; Secondary: 17B99 (Secondary)}} 
\maketitle
\begin{abstract} To a
  dominant morphism $X/S \to Y/S$ of N\oe therian integral $S$-schemes
  one has the inclusion $C_{X/Y}\subset B_{X/Y}$ of the critical locus
  in the branch locus of $X/Y$.  Starting from the notion of locally
  complete intersection morphisms, we give conditions on the modules
  of relative differentials $\Omega_{X/Y}$, $\Omega_{X/S}$, and
  $\Omega_{Y/S}$ that imply bounds on the codimensions of $ C_{X/Y}$
  and $ B_{X/Y}$.  These bounds generalise to a wider class of
  morphisms the classical purity results for finite morphisms by
  Zariski-Nagata-Auslander, and Faltings and Grothendieck, and van der
  Waerden's purity for birational morphisms.
\end{abstract}

\section{Introduction} In this paper\footnote{This is a small revision
  of the original publication, see \Remark{\ref{revised}} }
$\pi:X/S \to Y/S$ denotes a dominant morphism of N\oe therian integral
$S$-schemes, which is locally of finite type and of relative dimension
$d_{X/Y}$. Let $\Omega_{X/Y}$ be the sheaf of relative differentials, i.e.
\begin{displaymath}
   \Omega_{X/Y}=   \Coker (\pi^*(\Omega_{Y/S}) \to \Omega_{X/S}),
 \end{displaymath}
 and dually let $\Cc_{X/Y} =$ $ \Coker(d\pi: T_{X/S}\to T_{X/S\to
   Y/S})$ be the critical module of $\pi$, where $d\pi$ is the tangent
 morphism of $\pi$.  The {\it critical locus} \/of $\pi$ is the
 support $C_\pi$ of $\Cc_{X/Y}$, and the {\it branch locus} $B_\pi$ is
 the set of points $x$ where the stalk $\Omega_{X/Y,x}$ is not free;
 we abuse the terminology since $B_\pi$ is the set of ramification
 points as defined in \cite{EGA4:4} only when $\Omega_{X/Y}$ is
 torsion and hence $B_\pi = \supp \Omega_{X/Y}$.  These two subsets of
 $X$, satisfying $C_\pi\subset B_\pi$, exert much control on the
 morphism $\pi$.  If $B_\pi = \emptyset$, then $\pi$ is smooth (as
 defined in \cite[Def.  17.3.1]{EGA4:4}) if it is flat and generically
 smooth, and if moreover $\pi$ is finite and $Y$ is normal, then $\pi$
 is \'etale \cite[Sec.  4]{auslander-buchsbaum}; see
 \cite{kallstrom:zl} for a discussion of the relation between the
 branch locus and the non-smoothness locus.
% ;
%  \Theorems{\ref{smooth-flat}}{\ref{th-smooth}} contain conditions
%  under which smoothness is equivalent to $B_\pi = \emptyset$, see also
%  \cite[Cor.  18.10.3]{EGA4:4}).
 If $C_\pi = \emptyset$, and $\pi$ is either flat or $Y/S$ is smooth,
 then tangent vector fields on $Y$ lift (locally) to tangent vector
 fields on $X$, so according to Zariski's lemma ($\Char X=0$) the
 morphism $\pi$ is locally analytically trivial.  It is therefore a
 natural problem to find upper bounds on the codimensions of $B_\pi$
 and $C_\pi$, so that $B_\pi = \emptyset$ or $C_\pi = \emptyset$ can
 be controlled in low codimensions.  The best situation is when
 $\codim_X C_\pi \leq 1$ and $\codim_X B_\pi \leq 1$ (when nonempty),
 and we say that $C_\pi$ and $B_\pi$ are {\it pure} \/(of codimension
 $1$), respectively.  Let $F_i(M)$ denote the ith Fitting ideal of a
 module and the relative dimension $d_{X/Y}$ be defined as the Krull
 dimension of the generic fibre of $\pi$.  If $X/S$ and $Y/S$ are
 smooth there is a duality relation
 \begin{equation}\tag{$*$}\label{fitting-dual}
   F_i(\Cc_{X/Y})= F_{d_{X/Y}+i}(\Omega_{X/Y})
 \end{equation} \Prop{\ref{dual-crit}}, so in particular  $C_\pi=B_\pi$ in
 this case.   A simple notable fact is that $\codim_X C_\pi\leq 1$ if the image of
 the tangent morphism $ \Imo (d\pi)$ satisfies Serre's condition
 $(S_2)$, and that this holds in particular when $X$ satisfies $(S_2)$
 and $\pi$ is generically separably algebraic; hence  by
 \thetag{$*$}  $\codim_X B_\pi \leq 1$ when $X/S$ and $Y/S$ are smooth.
 In general we shall see that  
 $C_\pi$ is pure ``more often'' than $B_\pi$  \Th{\ref{purity of critical}}.     

 Our method of establishing purity results for $C_\pi$ and $B_\pi$ is
 by assuming a good behaviour of the modules $\Omega_{X/S}$ and
 $\Omega_{Y/S}$.  Say that $\pi$ is a {\it differentially complete
   intersection morphisms} (d.c.i.) at a point $x$ if the projective
 dimension $\pdo \Omega_{X/Y,x}\leq 1$.  This notion is inspired by a
 result due to Ferrand and Vasconcelos
 \cite{ferrand:suite,vasconcelos:normality} that in the case of
 generically smooth domains over a field, which, when extended to a
 relative situation, states that locally complete intersection
 morphisms $X/Y$ that are smooth at all the associated points in $X$
 are $d.c.i.$, and that the converse holds if $\pdo_{\Oc_{Y,\pi(x)}}
 \Oc_{X,x}<\infty $ for each point $x\in X$; we include a complete
 proof \Th{\ref{ferrand-vasconcelos}}.
 In \Section{\ref{diffcompletesection}} we work out some basic results
 for d.c.i. morphisms.  Not only is the notion of d.c.i. morphisms
 more general than that of locally complete intersection morphisms,
 but it is also in some respects easier to work with, since many
 proofs of basic properties are rather straightforward.  For example,
 we easily get a base-change theorem for locally complete intersection
 morphisms without flatness assumptions \Th{\ref{base-change:dci}},
 and also composition and decomposition properties for
 d.c.i. morphisms \Th{\ref{comp-dci}}. This can be compared to results
 by Avramov \cite[5.6, 5.7 ,5.11]{avramov:lci}, where composition and
 decomposition properties for locally complete intersection morphisms
 are established using deep characterisations of such morphisms in
 terms of the cotangent complex, and to get the corresponding base
 change theorem a type of flatness is required.

 Since the higher branch and critical loci $ B_\pi^{(i)}$ and $
 C^{(i)}_\pi$ ($B^{(0)}_\pi=B_\pi$ and $C^{(0)}_\pi= C_\pi$) we are
 interested in are defined by certain Fitting ideals, in order to
 achieve bounds on codimensions we relate heights of Fitting ideals to
 Euler characteristics and Betti numbers.  This is a general
 discussion that can be of some independent interest, based on bounds
 by Eagon and Northcott \cite{eagon-northcott:height}, and Eisenbud,
 Ulrich and Huneke \cite{eisenbud-huneke-ulrich:heights}.

 Let $\delta_{X/Y}$ be the relative smoothness defect, $\chi_2(M)$ a
 partial Euler characteristic, and $\beta_1(M)$ the first Betti
 number of a module (see \Section{\ref{sec-pur-general}} for precise
 definitions).  The following is an example of the type of bounds that
 we attain:
  \begin{thma}\label{intro-thm} Let $\pi : X/S\to Y/S$ be a
    generically smooth morphism of N\oe therian integral $S$-schemes
    such that $\Omega_{X/S}$ and $\Omega_{Y/S}$ are coherent. 
    \begin{enumerate}    \item 
  \begin{displaymath}  
    \codim_X^+B_\pi^{(i)}\leq (d_{X/Y}+i+1)(i+1+ \delta_{ Y/S}+ \chi_2(\Omega_{X/S})).
  \end{displaymath}
In particular, if  $X/S$ is d.c.i., then 
  \begin{displaymath}
    \codim_X^+B_\pi^{(i)}\leq (d_{X/Y}+i+1)(i+1+ \delta_{ Y/S}).
  \end{displaymath}
\item Assume that $X/S$ and $Y/S$ are d.c.i., then
      \begin{displaymath}
        \codim^+_X B^{(i)}_\pi\leq ( d_{X/Y}+i+1)(i+1 +\chi_2(\Omega_{X/Y})) \leq ( d_{X/Y}+i+1)(i+1 +\beta_1(\Omega_{X/Y})).
      \end{displaymath}
\item Assume that $X/S$ is smooth and that each restriction to fibres
  $X_s \to Y_s$, $s\in S$, is generically smooth, then
      \begin{displaymath}
        \codim_X^+ B_\pi \leq \delta_{X/Y} + d_{X/Y} -1.
      \end{displaymath}
    \end{enumerate}
 \end{thma}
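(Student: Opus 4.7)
The plan is to reduce each of the three bounds to the classical codimension estimate on determinantal loci: if $M$ admits a local presentation $R^p \xrightarrow{A} R^q \to M \to 0$, then $F_j(M)$ is generated by the $(q-j)$-minors of $A$ and the Macaulay--Eagon bound gives $\codim F_j(M) \leq (p-q+j+1)(j+1)$. Since $B_\pi^{(i)}$ is cut out by the $(d_{X/Y}+i)$-th Fitting ideal of $\Omega_{X/Y}$, the whole theorem amounts to producing, at each point $x \in X$, a local presentation of $\Omega_{X/Y,x}$ with a $p$ and $q$ that can be read off from the invariants of $\Omega_{X/S}$ and $\Omega_{Y/S}$ appearing on the right-hand sides of (1)--(3).

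For part (1), I would start with a minimal free presentation $R^{\beta_1} \to R^{\beta_0} \to \Omega_{X/S,x} \to 0$, where $\beta_0 = \mu(\Omega_{X/S,x})$ and $\beta_1 = \beta_1(\Omega_{X/S,x})$. The first fundamental exact sequence $\pi^*\Omega_{Y/S} \to \Omega_{X/S} \to \Omega_{X/Y} \to 0$ lets me lift generators of $\Omega_{Y/S,\pi(x)}$ (at most $d_{Y/S} + \delta_{Y/S}$ of them) to $R^{\beta_0}$; combined with the syzygies of $\Omega_{X/S,x}$ this gives a presentation
\[
R^{\beta_1 + \mu(\Omega_{Y/S,\pi(x)})} \longrightarrow R^{\beta_0} \longrightarrow \Omega_{X/Y,x} \longrightarrow 0.
\]
The Macaulay bound with $j = d_{X/Y}+i$ yields $(d_{X/Y}+i+1)(\beta_1-\beta_0+\mu(\Omega_{Y/S}) + d_{X/Y}+i+1)$; the bookkeeping step is to recognise $\beta_1 - \beta_0 + d_{X/Y}$ as $\chi_2(\Omega_{X/S}) - d_{Y/S}$ by the definition of the partial Euler characteristic (so that the d.c.i.\ case, where the resolution of $\Omega_{X/S}$ stops at length one, forces $\chi_2(\Omega_{X/S}) = 0$ and gives the sharper bound stated in the ``in particular'' clause).

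For part (2), I would feed the d.c.i.\ hypothesis for $X/S$ and $Y/S$ into the Jacobi--Zariski long exact sequence associated to $S \to Y \to X$, or work directly with the transitivity triangle of cotangent complexes. Because $\pdo \Omega_{X/S,x} \leq 1$ and $\pdo \Omega_{Y/S,\pi(x)} \leq 1$, one can re-assemble a two-term presentation of $\Omega_{X/Y,x}$ whose deficit $p-q$ is controlled by $\chi_2(\Omega_{X/Y})$ (or, more crudely, by $\beta_1(\Omega_{X/Y})$ alone); plugging this into the same codimension estimate gives the claimed bound. Part (3) is the easiest: smoothness of $X/S$ makes $\Omega_{X/S}$ locally free of rank $d_{X/S}$, so the presentation is already
\[
\Oc_{X,x}^{\,\mu(\Omega_{Y/S,\pi(x)})} \longrightarrow \Oc_{X,x}^{\,d_{X/S}} \longrightarrow \Omega_{X/Y,x} \longrightarrow 0,
\]
and the fibrewise generic smoothness ensures the generic rank of $\Omega_{X/Y}$ is $d_{X/Y}$ pointwise over $S$. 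Applying the Macaulay bound to the maximal minors that cut out $B_\pi = V(F_{d_{X/Y}}(\Omega_{X/Y}))$ directly gives $\codim B_\pi \leq (\mu(\Omega_{Y/S})-d_{Y/S}) + d_{X/Y}$, which I would then identify with $\delta_{X/Y} + d_{X/Y}$ using the first exact sequence and smoothness of $\Omega_{X/S}$.

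The main obstacle is the accounting step in part (1): the raw Macaulay bound produces a codimension estimate in terms of numbers of generators and first Betti numbers of $\Omega_{X/S}$ and $\Omega_{Y/S}$, whereas the theorem is stated in the cleaner invariants $\delta_{Y/S}$ and $\chi_2(\Omega_{X/S})$. Showing that the raw bound never exceeds the cleaner one requires a careful comparison using the rank of $\Omega_{X/S}$ (which is $d_{X/S} = d_{X/Y}+d_{Y/S}$ generically by generic smoothness) and the definition of $\chi_2$; once this identification is made, the implications ``d.c.i.\ $\Rightarrow \chi_2 = 0$'' and ``smooth $\Rightarrow$ no defect'' make parts (1) and (3) immediate specialisations, and part (2) follows from the analogous computation with the two-term resolutions guaranteed by Theorem~\ref{ferrand-vasconcelos} applied to both $X/S$ and $Y/S$.
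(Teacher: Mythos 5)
Parts (1) and (2) of your plan follow essentially the same route as the paper. For (1) you assemble a presentation of $\Omega_{X/Y,x}$ with $\beta_1(\Omega_{X/S,x})+\mu(\Omega_{Y/S,\pi(x)})$ relations and $\beta_0(\Omega_{X/S,x})$ generators from the first fundamental exact sequence, apply the Eagon--Northcott height bound, and convert $\beta_1-\beta_0$ into $-d_{X/S}+\chi_2(\Omega_{X/S})$ and $\mu(\Omega_{Y/S})$ into $d_{Y/S}+\delta_{Y/S}$; this is exactly the paper's Theorem \ref{fittingtheorem}(3)(a) fed into the sequence $\pi^*(\Omega_{Y/S})\to\Omega_{X/S}\to\Omega_{X/Y}\to 0$, and the bookkeeping you describe is the one the paper carries out. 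For (2) your plan is right in outline, though note that the first inequality is unconditional Eagon--Northcott on a minimal presentation of $\Omega_{X/Y}$ (Theorem \ref{purity of critical}(2)); the d.c.i.\ hypotheses on $X/S$ and $Y/S$ are used only to force $\pdo\Omega_{X/Y,x}\leq 2$ (the paper invokes Theorem \ref{comp-dci} for this), whence $\chi_2(\Omega_{X/Y,x})=\beta_2(\Omega_{X/Y,x})=\beta_1(\Omega_{X/Y,x})-\delta_{X/Y,x}\leq\beta_1(\Omega_{X/Y})$. Without finiteness of the projective dimension the quantity $\chi_2(\Omega_{X/Y})$ is not even defined, so this step cannot be skipped.

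Part (3) has a genuine gap. The tool you announce at the outset, $\codim F_j(M)\leq(p-q+j+1)(j+1)$, is multiplicative: applied to the presentation $\Oc_{X,x}^{\mu(\Omega_{Y/S,\pi(x)})}\to\Oc_{X,x}^{d_{X/S}}\to\Omega_{X/Y,x}\to 0$ with $j=d_{X/Y}$ it yields $(\delta_{Y/S}+1)(d_{X/Y}+1)$, not the sum $(\mu(\Omega_{Y/S})-d_{Y/S})+d_{X/Y}$ you write down (the relevant minors are not maximal unless $\delta_{Y/S}=0$ or $d_{X/Y}=0$, so neither factor collapses to $1$), and in any case $\delta_{Y/S}$ is not $\delta_{X/Y}$. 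The additive bound $\codim_X^+B_\pi\leq\delta_{X/Y}+d_{X/Y}=\beta_0(\Omega_{X/Y})$ genuinely requires the Eisenbud--Huneke--Ulrich refinement of Eagon--Northcott, which is valid only over regular rings: the paper passes to the fibres $X_s$, which are regular precisely because $X/S$ is smooth, applies Theorem \ref{fittingtheorem}(2) to $\Omega_{X_s/Y_s}$ at $i=\chi=d_{X_s/Y_s}$ (using the fibrewise generic smoothness to know this generic rank), and then takes the supremum over $s\in S$. This fibrewise reduction is the reason the hypothesis on the restrictions $X_s\to Y_s$ appears in the statement; your argument never uses it, which is a sign that the plain determinantal bound cannot produce the claimed inequality.
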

 Clearly, (1) is more interesting than (2) and (3) if we have more
 knowledge of the ramification of $X/S$ and $Y/S$ than the
 ramification of $X/Y$.

 % One may reflect a moment over the usefulness of our relative setting,
 % bearing in mind that the non-smoothness and branch loci are unions of
 % the corresponding loci for morphisms of fibres $X_s \to Y_s$, taking
 % one point $s$ in $S$ at a time \cite[Props 17.8.1, 17.8.2]{EGA4:4}.
 % It is nevertheless natural to consider the schemes over $S$, since,
 % for example, if $S$ is defined over a field $k$ it is not necessary
 % that $X/k$ and $Y/k$ satisfy very strong requirements, say necessary
 % to apply Zariski-Nagata-Auslander or Grothendieck purity (described
 % below), as long as the fibres $X_s$ and $Y_s$ are sufficiently nice.

 Let us consider the case $d_{X/Y}=0$. On the one hand, if
 $\pi: X\to Y$ is a {\it finite } morphism we have classical result by
 Zariski, Nagata and Auslander
 \cites{zariski:purity,nagata:purity,auslander}, stating that $B_\pi$ is
 pure when $\pi$ is finite, $X$ is normal, and $Y$ is regular. This was
 generalised by Grothendieck, Faltings, Griffith, Cutkosky, and
 Kantorovitz
 \cites{SGA2,algebraisation:faltings,griffith:ram,cutkosky,kantorovitz},
 allowing some singularities in $Y$. On the other hand, for birational
 morphisms van der Waerden's purity theorem states that $B_\pi$ is pure
 when $Y$ is normal and a certain condition $(\Wb)$ is satisfied (e.g.
 $Y$ is $\Qb$-factorial). It is therefore natural to ask as in \cite
 [Rem. 21.12.14, (v)]{EGA4:4} if the two types of purity, one for
 finite and another for birational morphisms, can be used together so
 that one gets purity for generically finite morphisms from the mere
 existence of a factorisation into a birational and a finite morphism
 $X \xrightarrow{f}Y'\xrightarrow{g}Y$, where $Y'$ is the
 normalisation of $Y$ in $X$. But there seems no be no reason that
 $Y'$ satisfies $(\Wb)$; more precisely, the complement of the branch
 locus $B_g$ of the finite morphism needs to be affine (see
 \Remark{\ref{bir-fin}}). However, a theorem of Gabber
 \Th{\ref{gabber}} proves purity for such maps $X/Y$ when $Y$ is
 regular. We work out necessary conditions in
 \Theorem{\ref{example-th}} to ensure that this phenomenon does not
 occur also when $Y$ need not be regular. Moreover, we get other quite
 general positive results when $\pi$ is only generically separably
 algebraic, which is thus less than what is required both in the
 Zariski-Nagata-Auslander purity theorem and van der Waerden's purity
 theorem. The first is (1) in \Theorem{\ref{purity-br-loc}}, implying
 $\codim_X B_\pi\leq 1$ when $X/S$ is d.c.i. and $Y/S$ is smooth. However,
 it is often important to allow that either $X/S$ or $Y/S$ not be
 d.c.i.. First in arbitrary characteristic, \Theorem{\ref{simult}}
 gives $\codim_X B_\pi \leq 1$, still assuming that $X/S$ be d.c.i.,
 together with a regularity condition at points of low height in $X$
 and $Y$, respectively, and that the canonical map
 $\pi^*(\Omega_{Y/S})\to \Omega_{X/S}$ be injective. When the base scheme
 $S$ is defined over the rational numbers we have
 \Theorem{\ref{zariski-rel}}, again ensuring $\codim_X B_\pi \leq 1$, which
 can be regarded as a generalised relative version of the
 Zariski-Nagata-Auslander purity theorem in the sense that the
 conditions on $X$ and $Y$ are of a similar type, namely that $Y/S$ is
 smooth and $X$ satisfies $(S_2)$, while $\pi$ is only generically
 algebraic and not necessarily finite. In general, even if $\pi$ is
 finite, $B_\pi$ need not be pure of codimension 1 when $Y/S$ is
 non-smooth and the bound on $\codim B_\pi$ will depend on the type of
 singularities. For example, we include a simple argument for
 Cutkosky's bound $\codim_X B_\pi \leq 2$ when $\pi$ is finite, $X$ is
 normal, and $Y$ is a local complete intersection
 \Prop{\ref{cutkosky}}; it is really a direct consequence of
 Grothendieck's purity theorem. Our rather general bounds that arise
 from \Theorems{\ref{purity of critical}}{\ref{purity-br-loc}} in
 terms of defect numbers of $X/S$ and $Y/S$ are interesting to compare
 to a bound by Faltings and Cutkosky in terms of the regularity defect
 of $Y$, where the latter is applicable only when $\pi$ is finite. Not
 only is our bound easier to get and more general in that $\pi$ need not
 be finite, in the finite case it improves the Faltings-Cutkosky bound
 when $\pi$ is defined over some base $S$ and the singularities of $X$
 and $Y$ are fibered over $S$ (see discussion after
 \Proposition{\ref{cutkosky}}).

 \begin{remark}\label{revised}
   ({\it Added after publication}) I was made aware by Aise Johan de
   Jong that Example 4.3 in the published version is nonsensical so it
   is now removed; he also made me aware of an important result by O.
   Gabber \Th{\ref{gabber}}, which is now included for reference.
   Also, Supravat Sarkar detected an error in
   \Corollary{\ref{cor-pure}} caused by faulty application of
   \Theorem{\ref{fittingtheorem}} (2); this is now corrected.
   \Theorem{\ref{purity-br-loc}(3)} is also corrected. I thank de Aise
   Johan de Jong and Supravat Sarkar for their feedback.
 \end{remark}

 \medskip{\it Generalities}: All schemes are assumed to be N\oe
 therian and we conform to the notation in EGA. The height $\hto (x)$
 of a point $x$ in $X$ is the same as the Krull dimension of the local
 ring $\Oc_{X,x}$, and the dimension of $X$ is $\dim X = \sup \{\ \hto
 (x)\ \vert \ x\in X\}$. A point $x$ is a {\it maximal point} in a
 subset $T$ of $X$ if for each point $y$ in the closure of $x$ in $T$
 we have $\hto(x)\leq \hto(y)$, i.e., if $x_1 \in T$ specialises to
 $x$ and $\hto (x_1)\leq \hto(x)$, then $x_1=x$. Denote by $\Max (T)$
 the set of maximal points of $T$, so $\Max(X)$ consists of points of
 height $0$. A property on $X$ is {\it generic} if it holds for all
 points in $\Max (X)$.  {\it We assume that $X$ is generically
   reduced}, so $k_{X,\xi}= \Oc_{X,\xi}$ when $\xi \in \Max (X)$ (so
 if the nilradical of $\Oc_X$ is non-zero, then all its associated
 points are embedded points in $X$).  Put \begin{align*}
   \codim^+_X T &= \sup \{\ \hto (x) \  \vert \ x \in \Max (T) \},\\
   \codim^-_X T &= \inf \{\ \hto (x) \ \vert \ x \in \Max (T)\},
 \end{align*}
 so $\codim^-_{X} T \leq \hto (x)\leq \codim^+_X T$ when $x\in \Max
 (T)$ (in the introduction we mean $\codim = \codim^+$); one may call
 $ \codim^+_X T$ and $\codim^-_X T$ the upper and lower codimension of
 $T$ in $X$, respectively. To make our formulas hold when $T$ is the
 empty set, put $\codim_X^{+}\emptyset =-1$, since we are interested
 in  upper bounds of $\codim_X ^+ T$; similarly, we are interested in
 lower bounds on $\codim^-_X T$, so we put $\codim_X^-
 \emptyset=\infty$.  For a coherent $\Oc_X$-module $M$ we put
 $\depth_T M = \inf \{\depth M_x \ \vert \ x\in T\}$.  We define the
 {\it relative dimension} of a morphism locally of finite type $\pi:
 X\to Y$ at a point $x\in X$ as the infimum of the dimension of the
 vector space of K{\"a}hler differentials at all maximal points $\xi$
 that specialise to $x$, i.e.
 \begin{displaymath}
   d_{X/Y,x}=\inf \{ \dim_{k_{X,\xi}}\Omega_{X/Y, \xi} \ \vert \ x\in \xi^-, \xi \in \Max (X)\}.
 \end{displaymath}
 Put $d_{X/Y}= \inf \{ \dim_{k_{X,\xi}}\Omega_{X/Y, \xi} \vert \
 \xi\in \Max (X)\} $.  If $X/Y$ is generically smooth $X$, and the
 numbers $d_{X/S, \xi}$ and $d_{Y/S, \pi(\xi)}$ do not depend on the
 choice of maximal point $\xi \in \Max (X)$ that specialises to $x$,
 then $d_{X/Y,x}= d_{X/S, x}- d_{Y/S,\pi(x)}$.  Recall that
 $\dim_{k_{X,\xi}}\Omega_{X/Y, \xi}=
 \dim_{k_{X,\xi}}\Omega_{k_\xi/k_{\pi(\xi)}} $ is the same as the
 transcendence degree and dimension of a $p$-basis of
 $k_{\xi}/k_{\pi(\xi)}$ in characteristic $0$ and $p$, respectively,
 and these numbers are equal when the field extension is separable and
 finitely generated.  Thus $d_{X/Y,x}=0$ when $x\in \xi^-$, $\xi \in
 \Max (X)$, $\Oc_{X, \xi}= k_{X,\xi}$ and $k_{X, \xi}/k_{\pi(\xi)}$ is
 separably algebraic, and if moreover $X$ is integral, then $d_{X/Y,x}
 = \trdeg k_{X,\xi}/k_{Y,\pi(\xi)}$.

 \section{Critical scheme and branch scheme }\label{pure-critical}
 Assume that $X$ is a connected scheme and $\pi: X\to Y$ be a dominant
 morphism, which is locally of finite type.  The first fundamental
 exact sequence of quasi-coherent $\Oc_X$-modules
  \begin{equation}\label{eq:can-ex}
  0 \to \Gamma_{X/Y/S} \to     \pi^*(\Omega_{Y/S}) \xrightarrow{p}
  \Omega_{X/S} \to \Omega_{X/Y} \to 0.
\end{equation} 
contains the kernel $\Gamma_{X/Y/S}$ of $p$, which is called the
imperfection module of $X/Y/S$.  Denoting the image by $\Vc_{X/Y/S}$
we have two short exact sequences
\begin{eqnarray}\label{eq:can-ex1}
  0\to \Gamma_{X/Y/S}\to \pi^*(\Omega_{Y/S}) \to \Vc_{X/Y/S}\to 0, \\
  0 \to \Vc_{X/Y/S}\to \Omega_{X/S}\to \Omega_{X/Y}\to 0.\label{eq:can-ex2}
\end{eqnarray}

Consider a chain of morphisms $X \xrightarrow{i} X_r \xrightarrow{p} Y
\to S$ and put $\pi= p\circ i$. There is an exact sequence \cite[Th.
20.6.17]{EGA4:1}
\begin{equation}
  \label{imperfection-exact}
  0 \to \Gamma^X_{X_r/Y/S}\to \Gamma_{X/Y/S} \to \Gamma_{X/X_r/S} \to i^*(\Omega_{X_r/Y})\to \Omega_{X/Y} \to \Omega_{X/X_r}\to 0,    
\end{equation}
where
\begin{displaymath}
  \Gamma^X_{X_r/Y/S} = \Ker (i^* (p^*(\Omega_{Y/S})))\to i^*(\Omega_{X_r/S})).
\end{displaymath}
We will study the Fitting ideals $F_{d_{X/Y}+i}(\Omega_{X/Y})$, $i\geq
0$, defining the $i$th branch scheme $B^{(i)}_\pi$ (K\"ahler
different, Jacobians), so there is a finite decreasing filtration of
$B_\pi$
\begin{displaymath}
\cdots  \subseteq B_\pi^{(i)}\subseteq B_\pi^{(i-1)}\subseteq  \cdots \subseteq B_\pi^{0} = B_\pi.
\end{displaymath}
Here $B_\pi$ is the {\it branch scheme}\/ and its underlying
topological space is the {\it branch locus}.
\begin{remark}
  \begin{enumerate}
  \item If $Y/S$ is smooth and $X/Y$ is generically smooth, then
    $\Gamma_{X/Y/S}=0$, but $\Gamma_{X/Y/S}$ is normally non-zero when
    $Y/S$ is non-smooth, also in characteristic $0$.  Example: $A=
    k[x,y]/(x^2+y^3), B= k[x',y']/(x'^2+y')$ and let $ A \to B$ be
    defined by $x'=xy$ and $y'=y$ (a chart of the strict transform
    with respect to the blow-up of the origin). The torsion submodule
    of $\Omega_{A/k}$ is $A(2ydx -3xdy)$ and $\Gamma_{B/A/k} = B (
    2ydx -3xdy )\subset B\otimes_A \Omega_{A/k}$.

    \item The two middle terms in (\ref{eq:can-ex1}) are
      quasi-coherent so $\Gamma_{X/Y/S}$ is quasi-coherent, and if
      $Y/S$ is of finite type, then $\Gamma_{X/Y/S}$ is coherent,
      since $X$ is a noetherian space. Also, if only $X/Y$ is locally
      of finite type, then $\Gamma_{X/Y/S}$ is coherent; this is
      proven using the sequence (\ref{imperfection-exact}).  Assume
      that $X,Y,S$ are integral and that the fraction field of $S$ is
      perfect. If $\gamma_{X/Y/S}$ is the generic rank of
      $\Gamma_{X/Y/S}$, then $d_{X/Y}= \trdeg
      k_{X,\xi}/k_{Y,\pi(\xi)}+ \gamma_{X/Y/S}$ (Cartier's equality),
      and if $X/Y$ is generically smooth, then $d_{X/Y}=\trdeg
      k_{X,\xi}/k_{Y,\pi(\xi)}$ (see \cite[\S 20.6]{EGA4:1}).
    \item We have $B_\pi^{(i)}=X$ when $i < 0$ and $B_\pi^{(i)}=
      \emptyset$ when $i \geq \sup_x \{\beta_0(\Omega_{X/Y,x})\}
      -d_{X/Y}$.  By our definition of $d_{X/Y}$ it follows that
      $\codim_X^{-} B_\pi \leq 1$, always, and if $X/Y$ is generically
      smooth, then $B_\pi = B_\pi^{(0)}$. If $d_{X/Y}=0$, then $B_\pi
      = \supp \Omega_{X/Y}$, and $B_\pi$ is the locus of points where
      $X/Y$ is not formally unramified, while if $d_{X/Y}>0$, then
      $B_\pi$ is a subset of the non-smoothness locus of $X/Y$ ; if
      $\pi$ is locally of finite type and flat, then $B_\pi$ is the
      non-smoothness locus.
  \end{enumerate}
\end{remark}

The dual of $p$ induces a homomorphism of $\Oc_Y$-modules, the {\it
  tangent morphism}\/ of $\pi$, $ d\pi:
T_{X/S}=Hom_{\Oc_X}(\Omega_{X/S}, \Oc_X) \to T_{X/S\to Y/S}$ where the
$\Oc_X$-module of `derivations from $\Oc_Y$ to $\Oc_X$' is
\begin{displaymath}
  T_{X/S\to Y/S}   =Hom_{\Oc_X}(\pi^*(\Omega_{Y/S}),\Oc_X)=Hom_{\pi ^{-1}(\Oc_Y)}(\pi
  ^{-1}(\Omega_{Y/S}),\Oc_X),
\end{displaymath}  
and is part of the exact sequence
\begin{equation}\label{fund-exact} 0 \to T_{X/Y} \to T_{X/S}
  \xrightarrow{d\pi }
  T_{X/S \to Y/S} \to \Cc_{X/Y} \to 0,
\end{equation} where $\Cc_{X/Y}$ is the {\it critical
  module}\/ of $\pi$. 
The critical set $C_\pi=\supp \Cc_{X/Y}$  is endowed with a structure of a scheme (still denoted $C_\pi$), defined by  the Fitting ideal $F_0(\Cc_{X/Y})$; we say that $\pi$ is 
submersive at a  point $x$ in $X$ if $x\notin C_\pi$. Note that 
$T_{X/S\to Y/S} = \pi^*(T_{Y/S})$ when either $\pi$ is flat or $\Omega_{Y/S}$ is locally 
free of finite rank. Let  $C_{\pi}^{(i)}$ be the scheme that  is defined by
the Fitting ideal $F_i(\Cc_{\pi})$, giving a finite decreasing filtration  of the critical scheme $C_\pi$
\begin{displaymath}
  \subset C_\pi^{(i)}\subset   \cdots \subset C_\pi^{(1)}\subset   C_\pi^{(0)} = C_\pi.
\end{displaymath}

\begin{remark}
  If $\Omega_{X/S}$ is locally free it is straightforward to see that
  the space of $B^{(i)}_\pi$ is given by a rank condition on the
  induced map of fibres of the map $p$, while if $T_{Y/S}$ is locally
  of finite rank, the space of $C^{(i)}_\pi$ is given by a rank
  condition on the induced map of fibres of the map $d\pi$.
\end{remark}

\begin{proposition}\label{dual-crit} If $X/S$ and $Y/S$ are
  generically smooth and $B_{X/S}= \emptyset$, $B_{Y/S}= \emptyset$,
  then
  \begin{displaymath}
    F_i(\Cc_{X/Y}) = F_{d_{X/Y}+i} (\Omega_{X/Y}).
  \end{displaymath}
\end{proposition}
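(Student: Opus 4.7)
The plan is to reduce the identity of Fitting ideals to a single matrix fact, exploiting that under the hypotheses every module in the first fundamental sequence is locally free.

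First, $B_{X/S}=\emptyset$ and $B_{Y/S}=\emptyset$ force $\Omega_{X/S}$ and $\pi^*(\Omega_{Y/S})$ to be locally free on $X$. By generic smoothness their (locally constant) ranks agree with the generic ranks, namely $d_{X/S}$ and $d_{Y/S}$ respectively. Hence, locally on $X$, the map $p$ in (\ref{eq:can-ex}) is represented by a $d_{X/S}\times d_{Y/S}$ matrix $M$. Dualising into $\Oc_X$, both $T_{X/S}$ and $T_{X/S\to Y/S}$ are locally free as duals of locally free sheaves, and the tangent morphism $d\pi$ in (\ref{fund-exact}) is represented by the transpose $M^{\top}$.

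Second, I invoke the elementary matrix fact: for a map $\phi$ of free modules of ranks $r_1,r_2$, the Fitting ideals of $\Coker\phi$ and $\Coker\phi^{\vee}$ are linked by
\begin{displaymath}
F_i(\Coker\phi^{\vee})=F_{r_2-r_1+i}(\Coker\phi),
\end{displaymath}
since both sides are generated by $(r_1-i)=(r_2-(r_2-r_1+i))$-minors of the representing matrix, minors being invariant under transposition. Applying this with $\phi=p$, so that $\Coker p=\Omega_{X/Y}$ and $\Coker d\pi=\Cc_{X/Y}$, gives
\begin{displaymath}
F_i(\Cc_{X/Y})=F_{d_{X/S}-d_{Y/S}+i}(\Omega_{X/Y}).
\end{displaymath}

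It remains to identify $d_{X/S}-d_{Y/S}$ with $d_{X/Y}$. This additivity is supplied by generic smoothness: the residue-field extensions $k_{X,\xi}/k_S$ and $k_{Y,\pi(\xi)}/k_S$ at the maximal points $\xi$ are separable, so their transcendence degrees match $d_{X/S}$ and $d_{Y/S}$, and additivity of transcendence degree in the tower $k_S\subset k_{Y,\pi(\xi)}\subset k_{X,\xi}$ closes the loop. The real work of the argument is this last step of index bookkeeping; the matrix-level identity is a formality once local freeness is in place, but one has to interpret $d_{X/Y}$ consistently (as the generic fibre dimension, equivalently the transcendence degree of the generic residue-field extension) for the index shift to line up, which in positive characteristic is where some care is needed.
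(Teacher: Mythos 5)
Your proposal is correct and follows essentially the same route as the paper: establish that all terms are locally free of ranks $d_{X/S}$ and $d_{Y/S}$, invoke the transposition-invariance of minors relating the Fitting ideals of $\Coker p$ and $\Coker p^{*}=\Cc_{X/Y}$ (the paper packages this as Lemma~\ref{dual-fitting}, phrased via exterior powers rather than matrices), and conclude with the index shift $d_{X/Y}=d_{X/S}-d_{Y/S}$. Your closing caveat about interpreting $d_{X/Y}$ via transcendence degrees in positive characteristic is a fair point that the paper glosses over, but it does not change the argument.
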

To be clear, note that the requirements in
\Proposition{\ref{dual-crit}} are satisfied when $X/S$ and $Y/S$ are
smooth, and thus in this situation $C_\pi^{(i)} = B_\pi^{(i)}$.  The
reason is that $\Cc_{X/Y}$ and $\Omega_{X/Y}$ are transposed modules
of one another, so for the proof one needs a relation between the
Fitting ideals of a module and its transpose.
\begin{lemma}\label{dual-fitting} Let $\phi : G_1 \to G_2$ be a
  homomorphism of locally free $\Oc_X$- modules of finite ranks $g_1$
  and $g_2$, respectively. Let $\phi^* : G_2^* \to G_1^* $ be the
  homomorphism of dual modules.  Then
  \begin{displaymath} F_i(\Coker \phi) = F_{g_1-g_2 +i}(\Coker
    \phi^*).
  \end{displaymath}
\end{lemma}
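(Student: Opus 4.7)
\medskip

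The plan is to reduce to a local, affine calculation and then exploit the fact that a matrix and its transpose have the same minors. Since Fitting ideals are compatible with localisation and independent of the chosen presentation, I would pass to an affine open on which both $G_1$ and $G_2$ are free, choose bases, and represent $\phi$ by a $g_2 \times g_1$ matrix $A$ (rows indexed by a basis of $G_2$, columns by a basis of $G_1$). Then $\phi^*$ is represented by the transpose $A^{T}$, a $g_1 \times g_2$ matrix.

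With the presentations
\begin{displaymath}
G_1 \xrightarrow{A} G_2 \to \Coker \phi \to 0, \qquad G_2^{*} \xrightarrow{A^{T}} G_1^{*} \to \Coker \phi^{*} \to 0,
\end{displaymath}
the definition of Fitting ideals gives that $F_i(\Coker \phi)$ is generated by the $(g_2 - i)$-minors of $A$, while $F_{g_1 - g_2 + i}(\Coker \phi^{*})$ is generated by the $(g_1 - (g_1 - g_2 + i))$-minors of $A^{T}$, i.e.\ by the $(g_2 - i)$-minors of $A^{T}$. Since the set of $k$-minors of a matrix coincides with the set of $k$-minors of its transpose, the two ideals are literally generated by the same elements, proving the equality locally.

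The conventions $F_i(M) = 0$ for $i < 0$ and $F_i(M) = \Oc_X$ for $i$ larger than the number of generators require a quick sanity check: if $i \geq g_2$ then the minor size $g_2 - i \leq 0$ and we get the unit ideal on the left, while $g_1 - g_2 + i \geq g_1$ gives the unit ideal on the right. Similarly at the other end, if $g_1 - g_2 + i < 0$, i.e.\ $i < g_2 - g_1$, then both sides are zero because the matrix cannot have minors of size greater than $\min(g_1, g_2)$. These boundary cases cause no trouble.

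The only subtle point is really bookkeeping: because two natural indexing conventions for Fitting ideals of a module with $n$ generators differ by the shift $n$, one must verify that the claimed shift $g_1 - g_2$ is consistent with the convention that $F_i$ is generated by the $(\text{rank of free cover} - i)$-minors of the presentation matrix. This is the place where a sign or off-by-one error is most likely, so I would double-check it with the trivial example $\phi = 0$ (where $\Coker \phi = G_2$, $\Coker \phi^{*} = G_1^{*}$, and both $F_i(G_2)$ and $F_{g_1 - g_2 + i}(G_1^{*})$ should be the unit ideal for $i \geq g_2$ and zero otherwise). Once the indexing is verified, the proof is immediate from the minors-of-transpose identity.
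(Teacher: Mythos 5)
Your proof is correct, and the index bookkeeping checks out: with the paper's convention $F_i(M)=I_{n-i}$ for a presentation with $n$ generators, $F_i(\Coker\phi)$ is generated by the $(g_2-i)$-minors of $A$ and $F_{g_1-g_2+i}(\Coker\phi^*)$ by the $(g_2-i)$-minors of $A^{T}$, so the transpose-minors identity finishes it, and your treatment of the degenerate ranges is consistent with the conventions $F_i=0$ for $i<0$ and $F_i=\Oc_X$ for $i$ at least the number of generators. The route is genuinely different in presentation from the paper's, though the underlying content is the same fact in two languages. The paper stays coordinate-free: it describes $F_i(\Coker\phi)$ as the image of the map $\land^{g_2-i}G_1\otimes(\land^{g_2-i}G_2)^*\to\Oc_X$ induced by $\land^{g_2-i}\phi$, does the same for $\phi^*$, and then compares the two via the canonical isomorphisms $\land^{k}(G^*)\cong(\land^{k}G)^*$ for locally free $G$, assembled into a commutative square. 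Your argument localises, chooses bases, and invokes $\det A=\det A^{T}$; this is more elementary and arguably more transparent, and is fully justified because Fitting ideals are independent of the presentation and compatible with localisation. What the paper's version buys is that it never appeals to a choice of basis, so the identification of the two ideals is exhibited by a canonical map rather than verified generator by generator; what yours buys is brevity and the explicit identification of where an off-by-one error could hide, which you rightly flag and check.
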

\begin{proof} $F_i(\Coker \phi)$ is the image of the map
  $\land^{g_2-i}G_1 \otimes_{\Oc_Y}(\land^{g_2-i}G_2)^* \to \Oc_X$
  induced by the map $\land^{g_2-i}\phi : \land ^{g_2-i}G_1 \to
  \land^{g_2-i}G_2$ and $F_r(\Coker \phi^*)$ is the image of the map
  $\land^{g_1-r}G_2^* \otimes_{\Oc_Y}(\land^{g_1-r}G_1^*)^* \to \Oc_X$
  induced by the map $\land^{g_1-r}\phi^* : \land ^{g_1-r}G_2^* \to
  \land^{g_1-r}G_1^*$. When $g_2-i=g_1-r$, i.e.  $r= g_1-g_2+i$ we get
  a commutative diagram
  \begin{displaymath} \xymatrix{\land^{g_2-i}G_1
      \otimes_{\Oc_Y}(\land^{g_2-i}G_2)^* \ar[r]\ar[d]&
      \Oc_X\ar@{=}[d]\\ (\land^{g_2-i}G_1^*)^*
      \otimes_{\Oc_Y}\land^{g_2-i}G_2^* \ar[r]& \Oc_X }
  \end{displaymath} where the left vertical homomorphism
  exists because there are canonical maps $\land^{g_2-i}G_1 \to
  (\land^{g_2-i}G_1^*)^*$ and $\land^{g_2-i}G_2^* \to
  (\land^{g_2-i}G_2)^*$, and the latter is an isomorphism because
  $G_2$ is locally free (they both are isomorphisms since $G_1$
  also is locally free).
\end{proof}

\begin{pfof}{\Proposition{\ref{dual-crit}}} The assumptions give that
  $\Omega_{X/S}$ and $\Omega_{Y/S}$ are locally free of ranks
  $d_{X/S}$ and $d_{Y/S}$, respectively, so $d \pi: G_1=T_{X/S}\to
  G_2= \pi^*(T_{Y/S})$ is a homomorphism of locally free
  $\Oc_X$-modules, where $\Coker (d \pi)= \Cc_{X/Y}$ and $\Coker (d
  \pi ^*)= \Omega_{X/Y}$, so the result follows from
  \Lemma{\ref{dual-fitting}}, noting that $d_{X/Y}= d_{X/S}-d_{Y/S}$.
\end{pfof}

\begin{remark}\label{remark:transpose}

  Recall that the kernel and cokernel of a biduality morphism $M\to
  M^{**}$ of a coherent $\Oc_X$-module $M$ can be expressed using the
  transposed module $D(M)$, locally defined up to local projective
  equivalence by $D(M)= \Coker (\phi^*)$, where $\phi$ is a local
  presentation $F_1 \xrightarrow{\phi} F_0 \to M\to 0$.  Then we have
  the exact sequence
  \begin{equation}\label{auslander-bridger}
    0 \to Ext^1_{\Oc_X}(D(M), \Oc_X)\to M \to M^{**}\to Ext^2_{\Oc_X}(D(M), \Oc_X)\to 0;
  \end{equation}
  see \cite{auslander-bridger}.  Note also that when the projective
  dimension $\pdo M_x \leq 1$ for each point $x$, then $D(M)$ is
  locally projectively equivalent to $Ext^1_{\Oc_X}(M,$ $ \Oc_X)$.
\end{remark}

\section{Differentially complete
  intersections}\label{diffcompletesection} 
Ferrand and Vasconcelos \cites{ferrand:suite,vasconcelos:normality}
have shown that if $X/k$ is a reduced scheme locally of finite type
and generically smooth (i.e. the residue fields at all maximal points
are separable over $k$), then $X/k$ is a locally complete intersection
if and only if the projective dimension $\pdo \Omega_{X/k,x}\leq 1$ at
each point $x$; see also \cite[\S9]{kunz:kahler}. Because of this
result there are two natural notions of ``locally complete
intersection morphisms''. The first is well-known in the case when
$\pi: X\to Y$ is ``smoothable'': there exists a locally defined
factorisation $X\to Z \to Y$, where $X/Z$ is a regular immersion and
$Z/Y$ is formally smooth, i.e.  the ideal of $X$ in $Z$ is locally
defined by a regular sequence; this was further developed in
\cite{avramov:lci} to general morphisms employing
``Cohen-factorisations'', proving that there is an alternative
definition by the vanishing of certain Andr\'e-Quillen homology
groups.  We continue to call such morphisms locally complete
intersection morphisms (l.c.i.), but we shall however have more use
for a second possibility.  Say that a dominant morphism $\pi : X\to Y$
is a {\it differential complete intersection} (d.c.i.) at a point $x$
if $\pdo \Omega_{X/Y,x}\leq 1$, and that $\pi$ is a d.c.i.  if it is
d.c.i.  at each point $x$; we then also write $\pdo \Omega_{X/Y}\leq
1$.  Let $x $ be a specialisation of a point $\xi$ in $X$.  Since
$\Omega_{X/Y,\xi}= \Oc_{X,\xi}\otimes_{\Oc_{X,x}}\Omega_{X/Y,x}$, it
is evident that a morphism is a d.c.i. at $\xi$ if it is d.c.i.  at
$x$, hence it suffices to check the closed points in $X$ to see if a
morphism is d.c.i. If the first syzygy of the quasi-coherent module
$\Omega_{X/Y}$ is coherent it is clear that the set $\{x\in X \ \vert
\ \pi \text{ is a d.c.i.  at } x\}$ is open. Recall also that
$\Omega_{X/Y,x}= \Omega_{\Oc_{X,x}/\Oc_{Y,\pi(x)}}$ (see
\cite{kallstrom:preserve} for a proof not using the fact that
$\Oc_{X,x}\to \Oc_{X,\xi}$ is etale), so d.c.i. is a property of the
morphism of local rings $\Oc_{Y,\pi(x)}\to \Oc_{X,x}$.  If $\pi$ is
smoothable and l.c.i. at $x$ then it is l.c.i at $\xi$, but the proof
of this assertion is not as immediate as for the d.c.i.  property; for
non-smoothable morphisms this localisation property for
l.c.i. morphisms need not hold, see \cite[5.3, 5.12]{avramov:lci}.

\begin{theorem}(Ferrand, Vasconcelos)\label{ferrand-vasconcelos} Let
  $\pi : X/S\to Y/S $ be morphism which is locally of finite type, and
  consider the following properties  of a point $x$ in $X$:
  \begin{enumerate}[label=(\theenumi),ref=(\theenumi)]
  \item $\pi$ is l.c.i. at $x$.
  \item $\pi$ is d.c.i. at $x$.
  \end{enumerate}
  If $X/Y$ is smooth at all associated points in $X$, then $(1)\Rightarrow
  (2)$. If $X/Y$ is generically smooth and $\pdo_{\Oc_{Y,\pi(x)}}
  \Oc_{X,x} <\infty$,  then $(2)\Rightarrow (1)$.
\end{theorem}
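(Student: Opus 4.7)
The plan is to prove both directions by reducing to a local presentation and exploiting the second fundamental exact sequence of differentials. Set $A = \Oc_{Y,\pi(x)}$, $B = \Oc_{X,x}$, and write $B = P/I$, where $P = A[t_1,\ldots,t_n]_{\mathfrak m}$ is a localisation of a polynomial ring at a maximal ideal containing $I$, so that $A\to P$ is essentially smooth with $\Omega_{P/A}$ free of rank $n$. Since both d.c.i.\ and l.c.i.\ are properties of the local homomorphism $A\to B$, this presentation suffices. The central tool is the conormal sequence for $A\to P\to B$,
\[
0 \to H \to I/I^2 \xrightarrow{d} \Omega_{P/A}\otimes_P B \to \Omega_{B/A} \to 0, \qquad (\star)
\]
where $H = \ker d$ is the first Andr\'e--Quillen homology $H_1(A,B;B)$.

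For $(1)\Rightarrow (2)$, the l.c.i.\ hypothesis lets us choose the presentation so that $I$ is generated by a regular sequence $f_1,\ldots,f_m$ in $P$, making $I/I^2$ a free $B$-module of rank $m$. As a submodule of a free module, $H$ has all its $B$-associated primes in $\Ass B$, hence corresponding to associated points of $X$. At each such associated point $\xi$, the smoothness hypothesis implies $(\star)$ is split exact, so $H_\xi=0$. Thus $H=0$, and $(\star)$ exhibits a length-one free resolution of $\Omega_{B/A}$, giving $\pdo \Omega_{X/Y,x}\leq 1$.

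For $(2)\Rightarrow (1)$, split $(\star)$ into
\[
0 \to \Ima d \to \Omega_{P/A}\otimes_P B \to \Omega_{B/A} \to 0 \quad\text{and}\quad 0 \to H \to I/I^2 \to \Ima d \to 0.
\]
The hypothesis $\pdo \Omega_{B/A}\leq 1$ forces $\Ima d$ to be $B$-projective, hence $B$-free of rank $c$; by generic smoothness and a rank computation at a maximal point, $c = n - d_{X/Y}$. The second sequence then splits: $I/I^2 \cong H \oplus B^c$, and generic smoothness gives $H_\xi=0$ at each $\xi\in \Max X$. To obtain $H=0$ globally and the regular sequence generating $I$, invoke the finiteness hypothesis: the factorisation $B = (B\otimes_A P)/(t_i - f_i)$, where $(t_i - f_i)$ is a $B\otimes_A P$-regular sequence, combined with the flatness of $A\to P$ and the change-of-rings bound $\pdo_P M \leq \pdo_P S + \pdo_S M$ for $S = B\otimes_A P$, yields $\pdo_P B \leq n + \pdo_A B < \infty$. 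With $B$ now of finite $P$-projective dimension, the Auslander--Buchsbaum formula on $P$ identifies $\grade_P(I) = c$, and the rank formula $\mu_P(I) = \mu_B(H) + c$ extracted from the splitting, together with $P$-perfection of $B$, forces $\mu_B(H)=0$, hence $H=0$. Then $\mu_P(I) = c = \grade_P(I)$, so $I$ is generated by a regular sequence and $\pi$ is l.c.i.\ at $x$.

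The crux is the step in $(2)\Rightarrow (1)$ that upgrades the local-at-maximal-points vanishing $H_\xi = 0$ to the global $H=0$, together with the extraction of the regular sequence; this is precisely where the finiteness hypothesis $\pdo_A B<\infty$ enters. In the absolute case $A=k$ of Ferrand--Vasconcelos the hypothesis is automatic and $P$ is already regular local, so the Auslander--Buchsbaum step and the reduction to a regular sequence are standard. The general relative setting requires the flat base change $A\to P$ to transport finite projective dimension, after which the classical argument applies.
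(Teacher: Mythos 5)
Your direction $(1)\Rightarrow(2)$ is correct and is essentially the paper's argument: both reduce to the conormal sequence of a local presentation, observe that the kernel $H$ is a submodule of the free module $I/I^2$ so that $\Ass_B H\subset \Ass B$, and kill $H$ at the associated points using smoothness there. (The paper routes the vanishing at associated points through Lemma~\ref{vasconcelos-lemma}; your direct appeal to the split exactness of the conormal sequence at smooth points is cleaner.) Your verification that $\pdo_P B\leq n+\pdo_A B<\infty$ via the diagonal presentation and flat base change is also correct, and it makes explicit a step the paper only asserts.

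The direction $(2)\Rightarrow(1)$ has a genuine gap at its crux. Up to the decomposition $I/I^2\cong H\oplus B^c$ (because $\Ima d$ is a projective first syzygy of $\Omega_{B/A}$) you agree with the paper, but your mechanism for concluding $H=0$ does not work. The Auslander--Buchsbaum formula gives $\pdo_P B=\depth P-\depth_P B$; it does not ``identify $\grade(I,P)=c$'', since there is no a priori relation between $\depth P-\depth_P B$ and $c=\dim P-\dim B$. And ``$P$-perfection of $B$'', i.e.\ $\grade(I,P)=\pdo_P B$, is neither a hypothesis nor automatic for modules of finite projective dimension (for $I=(x^2,xy)$ in $k[[x,y]]$ one has $\pdo=2$ but $\grade=1$); invoking it here is essentially circular, because perfection is a consequence of the complete-intersection conclusion you are trying to reach. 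What is actually needed, and what the paper supplies, is a two-sided estimate: (i) the lower bound from Lemma~\ref{vasconcelos-lemma}, namely that $\pdo_P I<\infty$ together with the free summand $B^c\subset I/I^2$ produces a regular sequence of length $c$ in $I$, and of length $c+1$ if $H\neq 0$; and (ii) the upper bound $\grade(I,P)\leq \hto_P(I)\leq \dim P-\dim B=c$ coming from the dimension formula. Together these force $H=0$, and the induction inside the proof of Lemma~\ref{vasconcelos-lemma} then shows that the $c$ minimal generators of $I$ form a regular sequence. Neither (i) nor (ii) appears in your write-up, and the statements you substitute for them ($\grade(I,P)=c$ from Auslander--Buchsbaum, $\mu_P(I)=\mu_B(H)+c$ plus perfection) do not yield $\mu_B(H)=0$. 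Note finally that the finiteness hypothesis $\pdo_A B<\infty$ enters precisely through (i), via $\pdo_P I<\infty$, not through any perfection statement, and that the vanishing $H_\xi=0$ at maximal points, which you record, only shows $H$ is torsion and is not used in closing the argument.
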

\begin{remark} Note that we do not require that our l.c.i. or d.c.i
  morphsisms be flat; however, if $\pi$ is l.c.i. then $\pi$ will have
  a finite flat dimension. Kunz \cite[Th. 9.2]{kunz:kahler} gives a
  part of the proof in the above relative situation, assuming $\pi$ is
  flat, but it seems to me as if the possibility of embedded points
  was overlooked.
\end{remark}
We record a situation where no embedded points are present in $X$, so
the above smoothness conditions  at associated points can be expressed more concretely as
$X$ being geometrically reduced over the maximal points in $Y$. The
proof is immediate.
\begin{lemma}
  If $Y$ is Cohen-Macaulay and $X/Y$ is l.c.i., then $X$ is
  Cohen-Macaulay, and hence contains no embedded points.
\end{lemma}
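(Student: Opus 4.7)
The plan is to check the Cohen--Macaulay property stalk by stalk, and then invoke the well-known fact that a Cohen--Macaulay local ring has no embedded associated primes (every associated prime is minimal, equal to the contraction of a minimal prime in a system of parameters quotient).

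Fix $x \in X$ with image $y = \pi(x)$ in $Y$. Since $X/Y$ is l.c.i., after passing to completions we may choose a Cohen factorisation
\[
\widehat{\Oc_{Y,y}} \longrightarrow R \longrightarrow \widehat{\Oc_{X,x}},
\]
where the first map is flat with regular closed fibre and the second is a surjection whose kernel is generated by a regular sequence $f_1,\dots,f_r$; in the smoothable case one can work directly with a local factorisation $X \hookrightarrow Z \to Y$ through a smooth morphism and a regular immersion, which is simpler. Since $Y$ is Cohen--Macaulay, $\widehat{\Oc_{Y,y}}$ is Cohen--Macaulay. A flat local extension with Cohen--Macaulay (in fact regular) closed fibre preserves the Cohen--Macaulay property, so $R$ is Cohen--Macaulay. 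Quotienting a Cohen--Macaulay local ring by a regular sequence is Cohen--Macaulay; hence $\widehat{\Oc_{X,x}} \cong R/(f_1,\dots,f_r)$ is Cohen--Macaulay, and therefore so is $\Oc_{X,x}$ by faithful flatness of completion.

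Since each local ring $\Oc_{X,x}$ is Cohen--Macaulay, all its associated primes are minimal; translating to the scheme, every associated point of $X$ is a maximal point, so $X$ contains no embedded points. The only step requiring any care is the stability of the Cohen--Macaulay property along the ``smooth'' part of the factorisation, and this is classical (cf.\ \cite[Ch.~21]{EGA4:1}); the rest is routine commutative algebra, which is why the proof is immediate.
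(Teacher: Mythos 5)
Your argument is correct and is precisely the standard chain of facts the paper has in mind when it declares the proof ``immediate'': smooth (or flat with regular closed fibre) over Cohen--Macaulay is Cohen--Macaulay, quotient by a regular sequence preserves Cohen--Macaulay, and a Cohen--Macaulay local ring is unmixed, hence has no embedded primes. The paper gives no written proof, so your write-up simply makes the intended routine verification explicit.
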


The proof of the following lemma in
\cite{vasconcelos:normality} is perhaps a little succinct, so we
include an argument.
\begin{lemma}(Vasconcelos)\label{vasconcelos-lemma}
Let $J$ be a proper ideal of a N{\oe}therian local ring $A$, such that $\pdo _A J < \infty$. If $J/J^2= \Gamma \oplus K$ where $\Gamma$ is free of rank $l$ over $A/J$, then $J$ contains a regular sequence of length $l$, and if $K\neq 0$ it contains a regular sequence of length $l+1$.
  \end{lemma}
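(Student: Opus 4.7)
The plan is to establish both conclusions by induction on $l$, producing a regular sequence one element at a time by reducing modulo a non-zero-divisor. The central technical ingredient, applied at each step, is that the hypothesis $\pdo_A J < \infty$ combined with the Auslander-Buchsbaum formula forces $J$ to contain a non-zero-divisor, provided $l \geq 1$ or $K \neq 0$.

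To produce such a non-zero-divisor I would suppose for contradiction that $J$ consists of zero-divisors; prime avoidance then places $J \subseteq \pfr$ for some $\pfr \in \Ass A$, whence $\depth A_\pfr = 0$. Since $\pdo_{A_\pfr} J_\pfr \leq \pdo_A J < \infty$, Auslander-Buchsbaum yields $\pdo J_\pfr = \depth J_\pfr = 0$, so a non-zero $J_\pfr$ would be free; but its basis elements lie in the maximal ideal $\pfr A_\pfr$, whose elements are zero-divisors, contradicting that a basis element of a free module is a non-zero-divisor. Hence $J_\pfr = 0$, giving $s \in A\setminus\pfr$ with $sJ = 0$. When $l \geq 1$, the free summand $\Gamma$ is faithful over $A/J$, so $\Ann_{A/J}(J/J^2) = 0$; since $sJ \subseteq J^2$, the image $\bar s$ of $s$ in $A/J$ annihilates $J/J^2$, forcing $\bar s = 0$ and hence $s \in J \subseteq \pfr$, contradicting $s\notin\pfr$. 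The remaining case $l=0, K\neq 0$ requires a finer argument that the finite projective dimension hypothesis alone rules out $J\subseteq\bigcup_{\pfr\in\Ass A}\pfr$ while $J\neq 0$.

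For the inductive step assume $l \geq 1$ and choose $x_1 \in J$ whose image in $J/J^2$ is a basis vector of $\Gamma$; by a coset form of the preceding argument applied to $x_1 + J^2$ (which cannot be contained in any associated prime of $A$, else $J$ itself would be), we may further arrange that $x_1$ is a non-zero-divisor on $A$. Set $\bar A = A/(x_1)$ and $\bar J = J/(x_1)$. Since $x_1$ is $A$-regular, the short exact sequence $0 \to A \xrightarrow{\cdot x_1} J \to \bar J \to 0$ gives $\pdo_A \bar J < \infty$, and the standard change-of-rings equality $\pdo_{\bar A}\bar J = \pdo_A\bar J - 1$ (valid since $x_1 \bar J = 0$) shows $\pdo_{\bar A}\bar J < \infty$. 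The conormal module decomposes as $\bar J/\bar J^2 \cong J/(J^2 + (x_1)) \cong \bar\Gamma \oplus K$, where $\bar\Gamma$ is free of rank $l-1$ over $\bar A/\bar J \cong A/J$. The induction hypothesis produces a regular sequence $\bar x_2,\ldots,\bar x_l$ in $\bar J$ (with one further term if $K\neq 0$); lifting to $A$ yields the desired $x_1,\ldots,x_l$ (resp.\ $x_1,\ldots,x_{l+1}$) in $J$. The main obstacle is the base case $l=0,\,K\neq 0$ of the non-zero-divisor claim, where the faithfulness trick is unavailable and the finite projective dimension hypothesis must be exploited more directly.
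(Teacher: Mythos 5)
Your inductive skeleton coincides with the paper's, but there are two genuine gaps, one of which you acknowledge. First, the case you leave open --- producing a non-zero-divisor in $J$ when the free summand is trivial --- is not a peripheral corner: it is exactly the ``$K\neq 0$ gives length $l+1$'' clause, which is the part of the lemma actually used (contrapositively, to force $K=0$) in both directions of \Theorem{\ref{ferrand-vasconcelos}}, and it also recurs as the terminal step of your own induction whenever $K\neq 0$. The paper closes it at the outset by quoting the Auslander--Buchsbaum theorem on finite free resolutions: since $\pdo_A(A/J)<\infty$ and $\Ann_A(A/J)=J\neq 0$ (note $J\neq 0$ follows from $J/J^2\neq 0$ by Nakayama), the ideal $J$ contains an $A$-regular element. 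Your localization argument gets as far as $J_\pfr=0$, i.e.\ $sJ=0$ for some $s\notin\pfr$, and then stalls; the missing ingredient is the rigidity of the Euler characteristic of $A/J$ (its local rank is the same at every associated prime of $A$), or equivalently that $\Ass(J)\subset\Ass(A)$ forces $J=0$ once $J_\qfr=0$ for all $\qfr\in\Ass(A)$. With that one theorem in hand the faithfulness trick for $l\geq 1$ becomes unnecessary, since the same statement covers all cases uniformly.

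Second, the change-of-rings step is wrong as stated: for an $A/(x_1)$-module $M$ with $x_1$ regular on $A$, the identity $\pdo_{A/(x_1)}M=\pdo_A M-1$ holds only when the left-hand side is already known to be finite; finiteness of $\pdo_A M$ does not descend (take $A=k[[t]]$, $x_1=t^2$, $M=k$: then $\pdo_A k=1$ but $\pdo_{A/(t^2)}k=\infty$). In your situation the conclusion $\pdo_{\bar A}\bar J<\infty$ is still true, but proving it requires the extra fact that $x_1\notin\mf_A J$ (which your choice of $x_1$ does guarantee, its class being a basis vector of $\Gamma$): then the surjection $J/x_1J\to J/x_1A=\bar J$ splits, so $\bar J$ is a direct summand of $J/x_1J$, and the first change-of-rings theorem ($x_1$ regular on $A$ and on $J$) gives $\pdo_{\bar A}(J/x_1J)=\pdo_A J<\infty$, hence $\pdo_{\bar A}\bar J<\infty$. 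This is exactly the route the paper takes, citing Matsumura; without it your induction does not advance even in the cases where you do produce the first regular element.
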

  \begin{proof} Put $B=A/J$. Since $\pdo_A J < \infty $, hence $\pdo_A
    B< \infty $, and since $A$ is local, $B$ has a finite free
    resolution as $A$-module.  Since $\Ann_A (B)= J\neq 0$, by
    Auslander-Buchsbaum's theorem \cite{auslander-buchsbaum} $J$
    contains an $A$-regular element, so $J \not \subset P$ for each
    associated prime $P$ of $A$. By prime avoidance there exists an
    element $x\in J$ such that $x\not \in P$ for all associated primes
    (so again $x$ is a regular element) and $x\notin \mf_A J$. The
    image $\bar x$ of $x$ for the projection $J/J^2 \to \Gamma$
    satisfies $\bar x \not\in \mf_B \Gamma $, so it can be
    complemented to a basis $\{\bar x_1= \bar x, \bar x_2, \dots ,
    \bar x_l \}$ of the free $B$-module $\Gamma$, hence $B \bar x $ is
    a free summand of $\Gamma\subset J/J^2$. Select $x_i \in J$ that
    project to $\bar x_i$, $i=2, \dots , l$.

    Now put $A^* =A/(x)$ and $J^*=J/(x)$. Since $x$ is $A$-regular it
    is also $J$-regular, so by \cite[Lem. 2, \S 18]{matsumura}
    $\pdo_{A^*}J/xJ < \infty$. Since $x\not\in \mf_A J$ it follows
    that the natural map $J/xJ \to J^*$ splits (see proof of [Th 19.2,
    loc cit]). Therefore $\pdo_{A^*} J^* \leq \pdo _{A^*}J/xJ <
    \infty $. Since $B\bar x$ is a free summand of $\Gamma$ and hence
    a free direct summand of $J/J^2 $, it follows that $J^*/(J^*)^2=
    J/(Ax + J^2) \cong \Gamma^* \oplus K^* $, where $\Gamma^*$ is a
    free module of rank $l-1$, generated by $x_i \omod ((x)+ J^2)$,
    $i=2, \dots , l$.  If $K=0$ we see by induction that $I$ is
    generated by a regular sequence of length $l$.  If $K\neq 0$ again
    by induction it follows that $I$ contains a regular sequence of
    length $l+1$.
  \end{proof}
\begin{pfof}{\Theorem{\ref{ferrand-vasconcelos}}}
  $(1)\Rightarrow (2)$: There exists locally a factorisation $X
  \xrightarrow{i} X_r \to Y$ where $X/X_r$ is a regular immersion and
  $X_r/Y$ is smooth. Letting $I$ be the ideal of $X$ in $X_r$ we get
the exact sequence
  \begin{displaymath}
0\to  K \to     I/I^2 \to i^*(\Omega_{X_r/Y})\to \Omega_{X/Y}\to 0.
  \end{displaymath}
  Thus $I$ is generated by a regular sequence at each point $x$, so
  the $\Oc_{X,x}$-module $I_x/I_x^2$ is free, and $\Omega_{X_r/Y,x}$
  is free, so it follows that $\pdo \Omega_{X/Y,x}\leq 1$ if we prove
  that $K=0$. Since $I/I^2$ is locally free this will follow if
  $K_x=0$ when $x$ is an associated point. By assumption
  $\Omega_{X/Y,x}$ is free of rank $d_{X/Y}$, hence $I_x/I_x^2
  =\Gamma\oplus K_x $, where $\Gamma$ is free of rank $l:= d_{X_r/Y}
  -d_{X/Y}= \codim_{X_r} X$, since $X/X_r$ is a regular immersion.
  Since $X= V(I)$ it follows that $I_x$ does not contain a regular
  sequence of length $l+1$, hence by \Lemma{\ref{vasconcelos-lemma}}
  $K_x=0$. Since $K$ is a submodule of a locally free $\Oc_X$-module
  which is $0$ at all the associated points of $X$ it follows that
  $K=0$.

  $(2)\Rightarrow (1)$: There exists locally a factorisation
  $X\xrightarrow{i} X_r \to Y$ where $X_r/Y$ is smooth and $X/X_r$ is
  a closed immersion.  Consider the exact sequence
  \begin{displaymath}
    0 \to     \Gamma_{X/X_r/Y}\to i^*(\Omega_{X_r/Y})\to \Omega_{X/Y}\to 0.
  \end{displaymath}
  Since $\pdo \Omega_{X/Y,x}\leq 1$ at each point $x$ and
  $\Omega_{X_r/Y,x}$ is free, it follows that $ \Gamma_{X/X_r/Y,x}$ is
  free.  Putting $l= d_{X_r/Y,x} - d_{X/Y,x}$, since $X/Y$ and $X_r/Y$
  are generically smooth, we have $\rank \Gamma_{X/X_r/Y}= l$.
  Combining with the exact sequence in the beginning of the proof we
  get the split exact sequence
  \begin{displaymath}
    0 \to K \to     I_x/I_x^2 \to \Gamma_{X/X_r/Y,x}\to 0,
  \end{displaymath}
  where $K$ is torsion.  Put $A = \Oc_{X_r,x}$, $J= I_x$ and $\Gamma =
  \Gamma_{X/X_r/Y,x}$, so $J/J^2 = \Gamma \oplus K$, where $\Gamma$ is
  $A$-free of rank $l$, and as the maximal length of an $A$- regular
  sequence in $J$ satisfies $\depth_J A \leq \dim A -\dim \Oc_{X,x}=l$,
  \Lemma{\ref{vasconcelos-lemma}} implies $K=0$; we only have to note
  that $\pdo_{\Oc_{Y,\pi(y)}}\Oc_{X,x} < \infty$ implies
  $\pdo_{A}\Oc_{X,x}< \infty$, and therefore $\pdo_{A} J < \infty$.
\end{pfof}
\begin{remark}
  \begin{enumerate}\item 
    If $\pi$ is not generically smooth, then $(1)$ does not imply (2)
    in \Theorem{\ref{ferrand-vasconcelos}}. Example: $A=k[x]/(x^2)$ is
    l.c.i. over $k$, but $\pdo_A \Omega_{A/k} = \pdo_A k = \infty$.
  \item For a regular base $Y$, generically smooth d.c.i. morphisms
    are the same as generically smooth locally complete intersection
    morphisms, while  if $\pdo_{\Oc_{Y,\pi(x)}} \Oc_{X,x} = \infty$ for
    some point $x$, then (2) does not imply (1) in
    \Theorem{\ref{ferrand-vasconcelos}}. Example: $A=k[t]/(t^2)$, $k$
    is a field, $R=A[x,y]$ and $I=(t+x,t+yx^2)$. Put $B=R/I$ and
    consider the natural map $A\to B$. Then
    \begin{enumerate}
    \item In the ring $R$ we have $txy(t+x)= t(x^2y+t)$ and $I$ cannot
      be generated by a regular sequence.
    \item The $B$-module $I/I^2$ is free of rank $2$, so by (a) and
      \cite{vasconcelos:reg-seq} $\pdo_R I =\infty$.
    \item The sequence $0\to I/I^2 \to B\otimes_R \Omega_{R/A} \to
      \Omega_{B/A} =\frac{k[t,x,y]}{(t^2,x^2,y)}\to 0$ is exact also
      to the left.
    \end{enumerate}
    Therefore $\pdo \Omega_{B/A}\leq 1$ while $A\to B$ is not l.c.i.
    See also \Theorem{\ref{comp-dci}}, (3-4).
  \end{enumerate}

\end{remark}

\begin{lemma}\label{drsci-smooth} 
  Let $\pi: X\to Y$ be a locally of finite type morphism that is
  smooth at all associated points in $X$. Assume either:
  \begin{enumerate}[label=(\theenumi),ref=(\theenumi)]
  \item \label{dolg-lemma} $X/S$ and $Y/S$ are smooth.
  \item\label{reg-complete} $X$ and $Y$ are regular schemes.
  \end{enumerate} Then $\pi$ is d.c.i.
\end{lemma}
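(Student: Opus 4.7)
The plan is to reduce, in both cases, to showing that $\pi$ is a locally complete intersection morphism (l.c.i.), and then invoke \Theorem{\ref{ferrand-vasconcelos}}, $(1)\Rightarrow (2)$; its smoothness hypothesis at associated points of $X$ is precisely the standing assumption of the lemma.

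For case \ref{dolg-lemma}, I would factor $\pi$ locally through the graph
\begin{equation*}
   X \xrightarrow{(\id,\pi)} X\times_S Y \xrightarrow{\pr_2} Y.
\end{equation*}
The first projection $X\times_S Y\to X$ is the base change of the smooth morphism $Y/S$, hence smooth, so its section $(\id,\pi)$ is a regular immersion (sections of smooth morphisms being regular immersions); the second projection is the base change of $X/S$, hence smooth. Thus $\pi$ is a regular immersion followed by a smooth morphism, i.e., l.c.i., and \Theorem{\ref{ferrand-vasconcelos}} finishes the job.

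For case \ref{reg-complete}, I would choose a local factorization $X\xrightarrow{i} X_r \xrightarrow{p} Y$ (for instance $X_r=\mathbb{A}^r_Y$ for a suitable $r$) with $p$ smooth and $i$ a closed immersion. Since $Y$ is regular and $p$ is smooth, $X_r$ is regular. Fix $x\in X$, put $R=\Oc_{X_r,i(x)}$, let $J$ be the ideal of $i$ at $x$, and $A=R/J=\Oc_{X,x}$. Both $R$ and $A$ are regular local, so $\hto(J)=\dim R -\dim A =: h$, and the image of $J$ in $\mf_R/\mf_R^2$ has dimension $h$. Lifting a basis of this image to elements $x_1,\dots,x_h\in J$, one extends these to a regular system of parameters for $R$, so they form an $R$-regular sequence. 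The induced surjection $R/(x_1,\dots,x_h)\twoheadrightarrow A$ is a map of regular local rings of the same dimension, hence an isomorphism, so $J=(x_1,\dots,x_h)$. Therefore $i$ is a regular immersion, $\pi$ is l.c.i., and \Theorem{\ref{ferrand-vasconcelos}} applies.

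The main technical point is the regular-sequence extraction in case \ref{reg-complete}; once that is secured the two cases merge. For case \ref{dolg-lemma} one can alternatively bypass Ferrand--Vasconcelos by arguing directly from the first fundamental sequence: generic smoothness of $X/Y$ forces the imperfection module $\Gamma_{X/Y/S}$ to have generic rank zero, and, being a subsheaf of the locally free (hence torsion-free) module $\pi^*(\Omega_{Y/S})$ on the integral scheme $X$, it must vanish. The sequence \qr{eq:can-ex} then collapses to a short exact sequence $0\to \pi^*(\Omega_{Y/S})\to \Omega_{X/S}\to \Omega_{X/Y}\to 0$ whose first two terms are locally free, giving $\pdo \Omega_{X/Y,x}\leq 1$ at every $x$.
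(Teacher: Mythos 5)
Your proof is correct. For case (2) it is essentially the paper's argument: the paper likewise factors $\pi$ locally as $X\xrightarrow{i} X_r\xrightarrow{p} Y$ with $p$ smooth and $i$ a closed immersion, notes that $X_r$ is regular, asserts as a standard fact (which you supply in detail via regular systems of parameters) that a closed immersion of regular schemes is a regular immersion, and then invokes \Theorem{\ref{ferrand-vasconcelos}}, $(1)\Rightarrow(2)$. For case (1) your primary route is genuinely different: the paper does \emph{not} pass through the l.c.i.\ property there, but argues directly that $\Gamma_{X/Y/S}$, being a coherent submodule of the locally free module $\pi^*(\Omega_{Y/S})$ that vanishes at every associated point of $X$, must be zero, so that \qr{eq:can-ex} collapses to a short exact sequence with locally free first two terms and $\pdo \Omega_{X/Y,x}\leq 1$ follows at once --- this is exactly the ``alternative'' you sketch at the end (your phrasing via generic rank and torsion-freeness on integral $X$ is the special case of the associated-points formulation). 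Your graph factorization $X\to X\times_S Y\to Y$ is a legitimate way to exhibit $\pi$ as a regular immersion followed by a smooth morphism, and Ferrand--Vasconcelos then applies because its smoothness hypothesis at associated points is the lemma's standing assumption; what it buys is a uniform treatment of the two cases, at the cost of routing a soft statement through the heavier machinery of \Theorem{\ref{ferrand-vasconcelos}} (hence \Lemma{\ref{vasconcelos-lemma}}), whereas the paper's direct argument for (1) also produces the vanishing $\Gamma_{X/Y/S}=0$ explicitly, a fact it reuses elsewhere (cf.\ \Proposition{\ref{gamma0}}).
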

\ref{dolg-lemma} was first observed by Dolgachev
\cite{dolgachev:nonsmooth}; in \Theorem{\ref{comp-dci}} we will give
another necessary condition for $X/Y$ to be d.c.i.  Clearly,
$(1)\Rightarrow (2)$ when $X$ and $Y$ are geometrically regular over a
field.
\begin{proof}
  (1): Since $\pi^*(\Omega_{Y/S})$ is locally free and $\pi$ is smooth
  all associated points, implying that the imperfection module
  $\Gamma_{X/Y/S}$ is $0$ at all associated points, hence
  $\Gamma_{X/Y/S}=0$ in the exact sequence (\ref{eq:can-ex}). Since
  moreover $\Omega_{X/S}$ is locally free, it follows that $\pdo
  \Omega_{X/Y,x}\leq 1$ at each point $x$.  (2): There exists locally
  a factorisation $X \xrightarrow{i} X_r \xrightarrow{p} Y$, where $i$
  is a closed immersion and $p$ is smooth.  Since $p$ is smooth and
  $Y$ is regular, it follows that $X_r$ is regular. Since $X$ is
  regular and $i$ is a closed immersion, it must be a regular
  immersion. Therefore $\pi$ is l.c.i., hence by
  \Theorem{\ref{ferrand-vasconcelos}} $\pi$ is d.c.i.
\end{proof}
We give necessary conditions to conclude that a morphism is d.c.i.
when all its fibres are d.c.i.  

\begin{proposition}\label{drci-flat} Let $\pi: X\to Y$ be a flat
  dominant morphism locally of finite type of N\oe therian schemes,
  which is smooth at all associated points of $X$.  Assume either of
  the conditions:
  \begin{enumerate}[label=(\theenumi),ref=(\theenumi)]
  \item $\Omega_{X/Y}$ is $Y$-flat.
  \item each fibre $X_y/k_{Y,y}, y\in Y$, is generically smooth
    (i.e. generically geometrically reduced).
  \end{enumerate}
  If the fibre $X_y/k_{Y,y}$ is d.c.i., then $\pi$ is d.c.i. at each
  point $x$ in $X_y \subset X$. Hence if each (closed) fibre $X_y$ is
  d.c.i., then $\pi$ is d.c.i.
\end{proposition}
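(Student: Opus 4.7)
\emph{Proof plan.} Fix $x \in X_y$ with $y = \pi(x)$, and set $A = \Oc_{X,x}$, $B = \Oc_{Y,y}$, $k = k_{Y,y}$, so $B \to A$ is flat local. Since $\pi$ is locally of finite type, choose a local factorisation $\Spec A \hookrightarrow \Spec R \to \Spec B$ with $R/B$ smooth (hence $R$ is $B$-flat, $\Omega_{R/B}$ is $R$-free), and $R \to A$ a surjection with ideal $I$. Because $A$ is $B$-flat, $\Tor_1^B(A,k)=0$, so tensoring $0 \to I \to R \to A \to 0$ with $k$ gives an injection $\bar I := I/\fm_B I \hookrightarrow \bar R := R/\fm_B R$ with $\Oc_{X_y,x} = \bar R/\bar I$; likewise $\Omega_{X_y/k,x} = \Omega_{A/B}\otimes_B k$.

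For \emph{case (1)}, my plan is to invoke the standard lemma on projective dimension over a flat local extension of Noetherian local rings. Since $\Omega_{A/B}$ is finitely generated over $A$ and $B$-flat, and $A$ is itself a flat local $B$-algebra, one has
\begin{displaymath}
\pdo_A \Omega_{A/B} \;=\; \pdo_{\bar A}\bigl(\Omega_{A/B}\otimes_B k\bigr) \;=\; \pdo_{\Oc_{X_y,x}} \Omega_{X_y/k,x} \;\leq\; 1,
\end{displaymath}
the last bound being the d.c.i.\ hypothesis on the fibre. This settles (1) in one step.

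For \emph{case (2)}, my plan is instead to upgrade the fibre from d.c.i.\ to l.c.i., lift this to $R$, and then feed the result back into \Theorem{\ref{ferrand-vasconcelos}}. On the fibre, $\bar R$ is smooth over the field $k$ hence regular, so $\pdo_{\bar R}\bar A<\infty$, while $X_y/k$ is both generically smooth (by hypothesis) and d.c.i.\ at $x$. The implication $(2)\Rightarrow (1)$ of \Theorem{\ref{ferrand-vasconcelos}} therefore upgrades the fibre to l.c.i.\ at $x$, producing a regular sequence $\bar f_1,\ldots,\bar f_l \in \bar R$ that generates $\bar I$. Choosing lifts $f_i \in I$, Nakayama applied to $I/\fm_R I \cong \bar I/\fm_{\bar R}\bar I$ forces $f_1,\ldots,f_l$ to generate $I$, and the standard criterion that a sequence in a flat local extension is $R$-regular if and only if its reduction is $\bar R$-regular promotes $(\bar f_i)$-regularity to $(f_i)$-regularity in $R$. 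Hence $X \to X_r$ is a regular immersion at $x$, so $\pi$ is l.c.i.\ at $x$; the standing hypothesis that $\pi$ is smooth at all associated points of $X$ now lets me invoke $(1)\Rightarrow (2)$ of \Theorem{\ref{ferrand-vasconcelos}} to conclude that $\pi$ is d.c.i.\ at $x$.

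The main obstacle is the regular-sequence lifting step of case (2): turning a regular sequence generating $\bar I$ in $\bar R$ into one generating $I$ in $R$. Nakayama handles generation and the local flatness criterion handles regularity, but both ingredients lean essentially on the Noetherian flat setup $B \to R$ with $A = R/I$ also $B$-flat; without these the bridge from fibre to total space collapses.
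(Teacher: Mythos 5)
Your proof is correct and follows essentially the same route as the paper: case (1) is exactly the paper's \Lemma{\ref{flatlemma}} (descent of $\pdo \leq 1$ along a flat morphism for a $Y$-flat module, applied to $M=\Omega_{X/Y}$), and case (2) likewise upgrades the fibre to l.c.i.\ via \Theorem{\ref{ferrand-vasconcelos}} and lifts the regular sequence generating $\bar I$ to one generating $I$ using flatness of $R$ and $A$ over $B$. The only cosmetic difference is the finish of case (2): you re-invoke the implication $(1)\Rightarrow(2)$ of \Theorem{\ref{ferrand-vasconcelos}} for $\pi$ itself, whereas the paper concludes directly from the conormal sequence $0 \to I_x/I_x^2 \to i^*(\Omega_{X^r/Y})_x \to \Omega_{X/Y,x}\to 0$ (injectivity of $I_x/I_x^2\to \Lambda_x$ checked at associated points), which is the same argument in substance.
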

Andr\'e \cite{andre:pseudoregular} has studied flat morphisms such
that all fibres $X_y/k_{Y,y}$ are geometrically reduced and l.c.i.,
characterising them in terms of the cotangent complex.

For the proof of \Proposition{\ref{drci-flat}} we need a standard lemma.
\begin{lemma}\label{flatlemma} Let $ \pi : X\to Y$ be a flat morphism
  of schemes and $M$ be a coherent $\Oc_X$-module, flat over $Y$. Let
  $x$ be a point in $X$, $X_y$ be the fibre over $y= \pi(x)$, and
  $M_{X_y}$ the restriction of $M$ to $X_y$. If $\pdo M_{X_y,x}\leq
  1$, then $\pdo M_x \leq 1$.
\end{lemma}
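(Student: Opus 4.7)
The plan is to pass to the local rings $A = \Oc_{Y,y}$ and $B = \Oc_{X,x}$ and to lift a short free resolution of $N = M_x$ from the closed fibre by exploiting $A$-flatness. Set $\mathfrak{m} = \mathfrak{m}_A$, $\bar B = B/\mathfrak{m} B = \Oc_{X_y,x}$, and $\bar N = N/\mathfrak{m} N = M_{X_y,x}$. By hypothesis $B$ is $A$-flat, $N$ is finitely generated over $B$ and $A$-flat, and $\pdo_{\bar B} \bar N \leq 1$. Choose a presentation $0 \to K \to F_0 \to N \to 0$ with $F_0$ a finitely generated free $B$-module; since $N$ is $A$-flat the sequence remains exact after $-\otimes_A A/\mathfrak{m}$, so $K$ is $A$-flat and $0 \to \bar K \to \bar F_0 \to \bar N \to 0$ is exact.

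The hypothesis $\pdo_{\bar B} \bar N \leq 1$ forces $\bar K$ to be $\bar B$-projective, hence (since $\bar B$ is local) $\bar B$-free of finite rank. Lift a basis of $\bar K$ to elements of $K$ to obtain a $B$-linear map $\psi : G \to K$ from a finitely generated free $B$-module $G$. By construction $\bar\psi : \bar G \to \bar K$ is surjective, and since $\mathfrak{m} B \subseteq \mathfrak{m}_B$ we obtain $\Ima \psi + \mathfrak{m}_B K = K$; Nakayama applied to the finitely generated $B$-module $K$ then yields surjectivity of $\psi$.

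Setting $L = \Ker \psi$, both $G$ and $K$ are $A$-flat, so $0 \to \bar L \to \bar G \to \bar K \to 0$ is exact; but $\bar G \to \bar K$ sends a basis to a basis, hence is an isomorphism, forcing $L/\mathfrak{m} L = 0$. Again using $\mathfrak{m} B \subseteq \mathfrak{m}_B$ together with finite generation of $L$ over $B$, Nakayama forces $L = 0$. Thus $0 \to G \to F_0 \to N \to 0$ is a length-$1$ free resolution of $N$, and $\pdo_B N \leq 1$.

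The only substantive point is the final Nakayama step, which depends on the inclusion $\mathfrak{m} B \subseteq \mathfrak{m}_B$; the rest is the standard method of lifting a minimal resolution from a fibre via flatness. Alternatively, one may argue by a direct $\Tor$ computation: the $A$-flatness of $B$ and $N$ yields $\Tor^B_i(\bar B, N) = 0$ for $i > 0$, which gives the change-of-rings identity $\Tor^B_i(k(x), N) = \Tor^{\bar B}_i(k(x), \bar N)$, and then $\Tor^B_2(k(x), N) = 0$ by hypothesis, so $\pdo_B N \leq 1$.
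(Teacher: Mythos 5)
Your proof is correct and follows essentially the same route as the paper's: present $M_x$ by a finite free module, use $Y$-flatness of $M$ to see that the syzygy stays the syzygy on the fibre and is therefore free there, lift a basis, and apply Nakayama. The only cosmetic difference is that you verify injectivity of the lifted map by an explicit kernel-and-Nakayama argument (and offer a Tor change-of-rings alternative), where the paper cites the local criterion \cite[Th.~22.5]{matsumura}; both are valid.
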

\begin{proof} Locally there exists a presentation $0 \to L \to F \to M
  \to 0$ where $F$ is locally free of finite rank.  It suffices to see
  that $L_x$ is free.  Applying $k_{Y,y}\otimes_{\Oc_{Y,y}}\cdot $ to
  the exact sequence, by assumption $k_{Y,y}\otimes_{\Oc_{Y,y}}L_{x}$
  is free over $\Oc_{X_y,x}$, since $M_x$ is flat over $\Oc_{Y,y}$ and
  $\pdo_{\Oc_{X_y,x}} k_{Y,y}\otimes_{\Oc_{Y,y}}M_x \leq 1$. Selecting
  a basis $k_{Y,y}\otimes_{\Oc_{Y,y}}\Oc_{X,x}^n \cong
  k_{Y,y}\otimes_{\Oc_{Y,y}}L_{x}$, arising from a homomorphism $u:
  \Oc_{X,x}^n \to L_x$ of $\Oc_{X,x}$-modules, so $u$ is surjective by
  Nakayama's lemma.  Since $M_x$ and $\Oc_{X,x}$ are flat, hence $L_x$
  is flat over $\Oc_{Y,y}$, we conclude that $u$ is an isomorphism
  \cite[Th.  22.5]{matsumura}.
\end{proof}
\begin{pfof}{\Proposition{\ref{drci-flat}}} (1): This follows
  immediately from \Lemma{\ref{flatlemma}}, noting that
  $k_{Y,y}\otimes_{\Oc_{Y,y}}\Omega_{X/Y,x} = \Omega_{X_y/k_{Y,y}}$.

  (2): We can assume that $X/Y$ is a subscheme of a smooth scheme
  $X^r/Y$ so there is the short exact sequence
  \begin{displaymath}
    0 \to \Lambda  \to \Oc_X \otimes_{\Oc_{X^r}}\Omega_{X^r/
      Y}\to \Omega_{X/Y}\to 0,
  \end{displaymath}
  and the assertion is that $\Lambda_x$ is free over $\Oc_{X,x}$ when
  $x\in \pi^ {-1}(y)$. Since $X^r/Y$ is smooth it follows that the two
  terms to the right are coherent, so $\Lambda_x$ is of finite type.
  Let $I$ be the defining ideal of $X$ in $X_r$ (defined locally). We
  have a surjective map $I_x/I_x^2 \xrightarrow{d_x} \Lambda_x\to 0$.
  Since $\pdo (\Omega_{X_y/k_{Y,y},x}) \leq 1$ for each point $x\in
  X_y$ if follows by \Theorem{\ref{ferrand-vasconcelos}} that
  $X_y/k_{Y,y}$ is l.c.i., since $X_y/k_{Y,y}$ is generically smooth;
  hence since $\pi$ is flat, $I_x$ is generated by a regular sequence
  so $I_x/I_x^2$ is free over $\Oc_{X,x}$.  Since $X/Y$ is smooth at
  each associated point $\xi$ in $X$, and therefore $d_\xi$ is
  injective, it follows that $d_x$ is injective; hence $\Lambda_x =
  I_x/I_x^2$ is free.  Therefore $\pdo \Omega_{X/Y,x}\leq 1$.
\end{pfof}

Consider a base change diagram over
some scheme $S$:
 \begin{displaymath}\tag{$BC $}
    \xymatrix{
 X_1 \ar[r] ^j\ar[d]^{\pi_1}  
& X\ar[d]^\pi\\
      Y_1 \ar[r] & Y,
    }
  \end{displaymath}
  where $X_1=X\times_{Y}Y_1$. The class of d.c.i. morphisms behaves
  almost as well under base-change as the class of smooth morphisms.
\begin{theorem}\label{base-change:dci}
  Let $\pi: X/S \to Y/S$ be a dominant morphism which is locally of
  finite type, where $X/S$ and $Y/S$ are N\oe therian, and assume that
  $\pi$ is smooth at all points in $j(X_1)\subset X$ that are images
  of associated points in $X_1$. If $\pi$ is d.c.i., then $\pi_1:X_1\to
  Y_1$ is d.c.i.
\end{theorem}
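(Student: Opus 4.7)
The plan is to work entirely at the level of modules of Kähler differentials, using the standard base-change identity
$$\Omega_{X_1/Y_1}=j^*(\Omega_{X/Y}),$$
so that for any $x_1\in X_1$ with image $x=j(x_1)\in X$ one has $\Omega_{X_1/Y_1,x_1}=\Oc_{X_1,x_1}\otimes_{\Oc_{X,x}}\Omega_{X/Y,x}$. Since $\pi$ is d.c.i.\ at $x$, the local ring being N\oe therian gives a two-term free resolution
$$0\to L\to F\to \Omega_{X/Y,x}\to 0$$
by finitely generated free $\Oc_{X,x}$-modules $L$ and $F$. To prove that $\pi_1$ is d.c.i.\ at $x_1$, I need to exhibit such a two-term resolution of $\Omega_{X_1/Y_1,x_1}$ over $\Oc_{X_1,x_1}$.

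The obvious candidate is obtained by applying $\Oc_{X_1,x_1}\otimes_{\Oc_{X,x}}\cdot$ to the above sequence; the resulting complex fits into
$$0\to K\to L\otimes_{\Oc_{X,x}}\Oc_{X_1,x_1}\to F\otimes_{\Oc_{X,x}}\Oc_{X_1,x_1}\to \Omega_{X_1/Y_1,x_1}\to 0,$$
where the two middle terms are free over $\Oc_{X_1,x_1}$ and the kernel is the Tor module
$$K=\Tor_1^{\Oc_{X,x}}(\Oc_{X_1,x_1},\Omega_{X/Y,x}).$$
Thus the theorem is equivalent to the vanishing $K=0$, after which the displayed sequence provides the desired length-one free resolution.

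To establish $K=0$, I would use the fact that $K$ embeds into the free $\Oc_{X_1,x_1}$-module $L\otimes\Oc_{X_1,x_1}$; any associated prime of a non-zero submodule of a free module over a N\oe therian ring is an associated prime of the ambient ring itself, and such primes correspond exactly to associated points $\xi_1$ of $X_1$ that specialise to $x_1$. So it suffices to check that $K_{\xi_1}=0$ for each such $\xi_1$. Setting $\xi=j(\xi_1)$, the hypothesis says that $\pi$ is smooth at $\xi$, so $\Omega_{X/Y,\xi}$ is free over $\Oc_{X,\xi}$, and by flat localisation of Tor one obtains
$$K_{\xi_1}=\Tor_1^{\Oc_{X,\xi}}(\Oc_{X_1,\xi_1},\Omega_{X/Y,\xi})=0.$$
Hence $K=0$ and $\pi_1$ is d.c.i.\ at $x_1$; since $x_1$ was arbitrary, $\pi_1$ is d.c.i.

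The only delicate point in this plan is the associated-primes reduction in the third paragraph: one must be careful that the relevant associated primes of $\Oc_{X_1,x_1}$ really do correspond to associated points $\xi_1$ of $X_1$ in the global scheme (so that the smoothness hypothesis at $j(\xi_1)$ applies). This is immediate because associated primes of $\Oc_{X_1,x_1}$ are precisely the generisations $\xi_1\in\Spec\Oc_{X_1,x_1}$ that are associated points of $X_1$. Everything else is a formal manipulation with base-change of Tor and the definition of d.c.i., which is exactly why the argument is simpler than the one via Cohen factorisations in \cite[5.11]{avramov:lci}.
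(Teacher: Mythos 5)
Your argument is correct and is essentially the paper's own proof: the paper isolates the same computation as Lemma~\ref{diffcomplete-lem} (tensor a length-one free resolution of $\Omega_{X/Y,j(x_1)}$ up to $\Oc_{X_1,x_1}$, identify the kernel with $\Tor^1$, embed it in a free module so that its associated primes lie among those of $\Oc_{X_1,x_1}$, and kill it at the associated points of $X_1$ using the smoothness hypothesis at their images). The only cosmetic difference is that the paper states the Tor-vanishing step as a standalone lemma for an arbitrary coherent module, whereas you run it directly on $\Omega_{X/Y}$.
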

Note that when $\pi$ is flat, then $\pi$ is smooth at the associated
points of $j(X_1)\subset X$ if and only if $\pi_1$ is smooth at the
associated points of $X_1$ \cite[Th 19.7.1]{EGA4:1}.

\begin{lemma}\label{diffcomplete-lem}
  Let $j : X\to Y$ be a morphism of schemes and $M$ a coherent
  $\Oc_Y$-module satisfying $\pdo M_y \leq 1$ at each point $y$ in the
  image $j(X)$.  Assume also that $M_y$ is flat over $\Oc_{Y,y}$ when
  $y$ is an associated point in $j(X)$.  Then $ \pdo j^*(M)_x \leq 1$
  at each point $x$ in $X$.
\end{lemma}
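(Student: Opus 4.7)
The plan is to reduce the claim to the vanishing of a single $\Tor$ module. Set $y=j(x)$. Since $\pdo M_y\leq 1$ and $\Oc_{Y,y}$ is Noetherian local, one can choose a finite free resolution $0\to L\to F\to M_y\to 0$ with $L,F$ finitely generated free $\Oc_{Y,y}$-modules. Applying $-\otimes_{\Oc_{Y,y}}\Oc_{X,x}$ yields an exact sequence
\begin{equation*}
  0\to T\to L\otimes_{\Oc_{Y,y}}\Oc_{X,x}\to F\otimes_{\Oc_{Y,y}}\Oc_{X,x}\to j^*(M)_x\to 0,
\end{equation*}
where $T=\Tor_1^{\Oc_{Y,y}}(\Oc_{X,x},M_y)$. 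Once $T=0$ is established, the two leftmost terms form a length-one free resolution of $j^*(M)_x$, giving $\pdo j^*(M)_x\leq 1$.

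The main step is to show $T=0$. The key observation is that $T$ embeds in the free $\Oc_{X,x}$-module $L\otimes_{\Oc_{Y,y}}\Oc_{X,x}$, so every associated prime of $T$ is an associated prime of $\Oc_{X,x}$. Assume for contradiction that $T\neq 0$ and pick an associated prime $\pfr$ of $T$. Then $\pfr$ corresponds to an associated point $\xi$ of $X$ that specialises to $x$; set $\eta=j(\xi)\in j(X)$. By continuity of $j$, $\eta$ is a generalisation of $y$, so $\Oc_{Y,\eta}$ is a localisation of $\Oc_{Y,y}$, and hence flat over it. Since $\xi$ is associated in $X$, the hypothesis provides that $M_\eta$ is flat over $\Oc_{Y,\eta}$.

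Localising the exact sequence above at $\pfr$ gives $T_{\pfr}=\Tor_1^{\Oc_{Y,y}}(\Oc_{X,\xi},M_y)$. Because $\Oc_{Y,y}\to \Oc_{Y,\eta}$ is flat, a change-of-rings argument identifies this with $\Tor_1^{\Oc_{Y,\eta}}(\Oc_{X,\xi},M_\eta)$, which vanishes by the flatness of $M_\eta$ over $\Oc_{Y,\eta}$. This contradicts the choice of $\pfr$ and forces $T=0$, as desired. The technical heart of the argument is the interplay of two facts: $T$ sits inside a free $\Oc_{X,x}$-module, so its associated primes are controlled by those of $\Oc_{X,x}$; and the change-of-rings reduces the Tor to one computed over the localisation $\Oc_{Y,\eta}$, where the flatness hypothesis at associated points of $j(X)$ applies directly.
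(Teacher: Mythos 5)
Your proof is correct and follows essentially the same route as the paper's: both tensor a length-one free resolution of $M_y$ to exhibit $\Tor_1^{\Oc_{Y,y}}(\Oc_{X,x},M_y)$ as a submodule of a free $\Oc_{X,x}$-module, and then kill it by observing that its associated primes must be associated primes of $\Oc_{X,x}$, where the flatness hypothesis at (images of) associated points applies. You merely make explicit the localisation and flat base-change step that the paper leaves implicit.
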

\begin{proof} Let $x$ be a point in $X$ and set $B= \Oc_{X,x}$ and $A=
  \Oc_{Y,\pi(x)}$.  If $0\to F^1 \to F^0 \to M_{j(x)}\to 0$ is exact,
  where $F_1, F_0$ are free, then $0 \to Tor^1_A(B, M_{j(x)}) \to
  B\otimes_A F^1 \to B\otimes_A F^0 \xrightarrow{h} j^*(M)_x \to 0$ is
  exact. By assumption the support of tha $B$-module $Tor^1_A(B,
  M_{j(x)})$ does not contain any associated point of $B$ and
  $B\otimes_A F^1$ is  free; therefore $Tor^1_A(B, M_{j(x)})=0$,
  implying $\pdo j^*(M)_x \leq 1$.
\end{proof}
\begin{pfof} {\Theorem{\ref{base-change:dci}}}
  This follows from \Lemma{\ref{diffcomplete-lem}}, noting that
  $\Omega_{X_1/Y_1} = j^*(\Omega_{X/Y})$.
\end{pfof}

\Theorem{\ref{ferrand-vasconcelos}} immediately implies the following
corollary to \Theorem{\ref{base-change:dci}}:
\begin{corollary}\label{gen-avramov} Make the same assumptions as in
  \Theorem{\ref{base-change:dci}}, and assume moreover that
  \begin{displaymath}
  \pdo_{\Oc_{Y_1,\pi(x)}}\Oc_{X_1,x} < \infty
\end{displaymath}
for all $x$ in $X_1$
  (e.g.  $Y_1$ is regular). Then $\pi_1$ is l.c.i..
\end{corollary}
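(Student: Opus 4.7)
The plan is to combine the base-change result for d.c.i.\ morphisms (\Theorem{\ref{base-change:dci}}) with the direction $(2)\Rightarrow(1)$ of the Ferrand--Vasconcelos criterion (\Theorem{\ref{ferrand-vasconcelos}}). Concretely, I will verify the three hypotheses needed to feed $\pi_1$ into that direction of Ferrand--Vasconcelos: (a) $\pi_1$ is d.c.i., (b) $\pi_1$ is generically smooth, and (c) the local homomorphism $\Oc_{Y_1,\pi_1(x)}\to \Oc_{X_1,x}$ has finite projective dimension at every $x\in X_1$.

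Step (a) is immediate from \Theorem{\ref{base-change:dci}}: by construction the assumptions of that theorem are precisely those we have imposed, so $\pi_1$ is d.c.i. Step (c) is given as a hypothesis of the corollary, which also explains the parenthetical remark: when $Y_1$ is regular, every local ring of $Y_1$ has finite global dimension, so $\pdo_{\Oc_{Y_1,\pi_1(x)}}\Oc_{X_1,x}<\infty$ is automatic for any finite-type scheme $X_1$ over $Y_1$.

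Step (b) is the point that requires a small argument. I would invoke the standard fact that smoothness is preserved under base change: if $\xi_1\in X_1$ is an associated (hence maximal) point, then by hypothesis $\pi$ is smooth at $j(\xi_1)\in X$, and therefore the base-changed morphism $\pi_1$ is smooth at $\xi_1$. Since maximal points of $X_1$ are associated points, $\pi_1$ is smooth at every maximal point of $X_1$, i.e.\ generically smooth.

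Having (a), (b), (c) in hand, the implication $(2)\Rightarrow(1)$ of \Theorem{\ref{ferrand-vasconcelos}} applies to $\pi_1$ and yields that $\pi_1$ is l.c.i. The only real subtlety in the plan is step (b), where one must be careful to distinguish associated points of $X_1$ from associated points of $X$; the hypothesis of \Theorem{\ref{base-change:dci}} was formulated exactly so that smoothness of $\pi$ at the images $j(\xi_1)$ gives smoothness of the base change $\pi_1$ at the associated points $\xi_1$ of $X_1$, which is all that is needed here.
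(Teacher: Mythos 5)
Your proof is correct and takes essentially the same route the paper intends: the paper states that the corollary follows immediately by feeding the conclusion of \Theorem{\ref{base-change:dci}} into the $(2)\Rightarrow(1)$ direction of \Theorem{\ref{ferrand-vasconcelos}}, which is exactly your steps (a) and (c). Your step (b), checking that $\pi_1$ is generically smooth via stability of smoothness under base change at the associated (hence maximal) points of $X_1$, correctly supplies the one hypothesis the paper leaves implicit.
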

\begin{remark}\label{base-change-dci}
  Assume that $\pi$ in the above diagram is l.c.i.. In
  \cite[5.11]{avramov:lci}, one {\it essentially} requires that either
  $Y_1/Y$ or $X/Y$ to be flat to infer that $\pi_1$ be l.c.i..
  \Corollary{\ref{gen-avramov}} implies that it suffices that $\pi$ be
  smooth at the associated points of $j(X_1) $ when $Y_1$ is regular
  \Th{\ref{ferrand-vasconcelos}}.  It is easy to see that $X_1/Y_1$ is
  d.c.i. when $X/Y$ is d.c.i. and flat.
\end{remark}

\begin{proposition}\label{gamma0} Let $X$ and $Y$ be N\oe
  therian schemes and $\pi : X/S \to Y/S$ be a morphism which is locally of
  finite type, and  smooth at all the  associated points in $X$.  Assume
  either of the following conditions:
  \begin{enumerate}[label=(\theenumi),ref=(\theenumi)]
  \item $\pi$ is l.c.i. (or d.c.i. and
    $\pdo_{\Oc_{Y,\pi(x)}}\Oc_{X,x}< \infty$, $x\in X$)
  \item $X/S$ is d.c.i.
  \end{enumerate}
  Then $\Gamma_{X/Y/S}=0$.
\end{proposition}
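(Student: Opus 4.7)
My plan is to realise $\Gamma_{X/Y/S}$ locally as a subsheaf of a locally free $\Oc_X$-module in both cases; combined with the fact that $\Gamma_{X/Y/S}$ vanishes at every associated point $\xi\in X$ (because smoothness of $\pi$ at $\xi$ makes the first fundamental sequence \eqref{eq:can-ex} left-exact there), this will force $\Ass(\Gamma_{X/Y/S})\subseteq \Ass(\Oc_X)$ and hence $\Gamma_{X/Y/S}=0$.

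For case (1) with $\pi$ l.c.i., I would work locally with a factorisation $X\xrightarrow{i}X_r\xrightarrow{p_r}Y$ where $p_r$ is smooth and $i$ is a regular immersion with ideal $I$. Three simplifications collapse the EGA sequence \eqref{imperfection-exact}: smoothness of $p_r$ gives $\Gamma_{X_r/Y/S}=0$, and local freeness of $\Omega_{X_r/Y}$ makes $i^*$ preserve the resulting injection, so $\Gamma^X_{X_r/Y/S}=0$; the closed immersion yields $\Omega_{X/X_r}=0$; regularity of $i$ identifies $\Gamma_{X/X_r/S}$ with the locally free sheaf $I/I^2$. The sequence then reads
\[
0\to\Gamma_{X/Y/S}\to I/I^2\to i^*(\Omega_{X_r/Y})\to\Omega_{X/Y}\to 0,
\]
giving the desired embedding into a locally free sheaf. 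The parenthetical alternative in (1) reduces to the l.c.i.\ case by \Theorem{\ref{ferrand-vasconcelos}}.

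For case (2), only $X/S$ is assumed d.c.i. I would keep the factorisation $X\xrightarrow{i}X_r\xrightarrow{p_r}Y$ with $p_r$ smooth, but $i$ now merely a closed immersion; the same first two simplifications still apply and yield
\[
0\to\Gamma_{X/Y/S}\to\Gamma_{X/X_r/S}\to i^*(\Omega_{X_r/Y})\to\Omega_{X/Y}\to 0,
\]
with $\Gamma_{X/X_r/S}=\ker(i^*\Omega_{X_r/S}\to\Omega_{X/S})$. The main obstacle is now to show that $\Gamma_{X/X_r/S}$ is locally free, since I have lost the regular immersion. My plan is to specialise the factorisation by taking $X_r=Y\times_S\mathbb{A}^n_S$, which splits $i^*\Omega_{X_r/S}\cong\pi^*(\Omega_{Y/S})\oplus i^*(\Omega_{X_r/Y})$ as $\Oc_X$-modules. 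Using $\pdo\Omega_{X/S}\leq 1$ (from $X/S$ d.c.i.) together with the observation that $\Omega_{X/S}$ is automatically free at every associated point of $X$ (by Auslander--Buchsbaum at the depth-zero local rings), a snake-lemma diagram chase against a local free resolution of $\Omega_{X/S}$ should extract local freeness of $\Gamma_{X/X_r/S}$ from the split component $i^*(\Omega_{X_r/Y})$, after which the argument of case (1) concludes. The delicate step is controlling the possibly wild behaviour of $\pi^*\Omega_{Y/S}$ inside this diagram chase; a cleaner route may be available via the decomposition machinery for d.c.i.\ morphisms of \Theorem{\ref{comp-dci}}.
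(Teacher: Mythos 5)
Your overall strategy --- exhibit $\Gamma_{X/Y/S}$ locally as a submodule of a locally free sheaf and combine this with its vanishing at the associated points of $X$ --- is exactly the paper's, which also works from the sequence \eqref{imperfection-exact} for a local factorisation $X\xrightarrow{i}X_r\to Y$ with $X_r/Y$ smooth. But in case (1) the step ``regularity of $i$ identifies $\Gamma_{X/X_r/S}$ with the locally free sheaf $I/I^2$'' is not justified as stated, and that identification is the crux of the whole argument. Regularity of the immersion gives local freeness of $I/I^2$ but says nothing about injectivity of the conormal map: for the regular immersion $\Spec k[x]/(x^2)\hookrightarrow \mathbb{A}^1_k$ the class of $x^2$ freely generates $I/I^2$ yet maps to $2x\,dx$, so $\overline{x^3}\neq 0$ lies in the kernel. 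Without that injectivity you only have a surjection $I/I^2\twoheadrightarrow\Gamma_{X/X_r/S}$, and $\Gamma_{X/Y/S}\subset\Gamma_{X/X_r/S}$ does not embed into $I/I^2$, so your displayed four-term sequence is unjustified. The repair uses a hypothesis you have but do not invoke here, and is precisely what the paper's proof of (1) labours over: the kernel of $\alpha:I/I^2\to\Gamma_{X/X_r/Y}$ vanishes at every associated point $\xi$ of $X$ (there $X/Y$ and $X_r/Y$ are both smooth, so $i$ is a split regular immersion of smooth $Y$-schemes at $\xi$), and since $\Ker\alpha$ sits inside the locally free $I/I^2$ this forces $\Ker\alpha=0$; as $\alpha$ factors through $I/I^2\to\Gamma_{X/X_r/S}$, the latter is then injective. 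In short, the ``vanishes at associated points and lives in a locally free module'' principle must be applied one step earlier than you apply it.

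Case (2) is a plan rather than a proof: you have correctly located the obstruction --- $i^*(\Omega_{X_r/S})\cong\pi^*(\Omega_{Y/S})\oplus(\text{free})$ carries the possibly ill-behaved summand $\pi^*(\Omega_{Y/S})$ --- but ``a snake-lemma diagram chase should extract local freeness of $\Gamma_{X/X_r/S}$'' is not an argument, and you concede as much. For comparison, the paper's proof of (2) is short: since $\Omega_{X/X_r}=0$ one has $0\to\Gamma_{X/X_r/S}\to i^*(\Omega_{X_r/S})\to\Omega_{X/S}\to 0$, and it deduces local freeness of $\Gamma_{X/X_r/S}$ from $\pdo\Omega_{X/S,x}\le 1$ together with the asserted local freeness of the middle term, then concludes from $\Gamma_{X/Y/S}\subset\Gamma_{X/X_r/S}$ exactly as in your final step. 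So the entire content of (2) is the point you flag as delicate --- arranging or circumventing local freeness of $i^*(\Omega_{X_r/S})$ when $Y/S$ is not smooth --- and as written your case (2) remains open.
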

\begin{proof}
  There exists locally a factorisation of $\pi$ of the form $X \to X_r
  \to Y$ where $X_r/Y$ is smooth and $X/X_r$ is closed immersion.
  Hence $\Omega_{X/X_r}=0$ and $\Gamma_{X_r/Y/S}=0$, and since
  $\Omega_{X_r/Y}$ is locally free we also get $\Gamma_{X_r/Y/S}^X=0$.
  Therefore the exact sequence (\ref{imperfection-exact}) gives the
  short exact sequence
  \begin{displaymath}
    0 \to \Gamma_{X/Y/S}\to \Gamma_{X/X_r/S}\xrightarrow{\delta } \Gamma_{X/X_r/Y}\to 0.
  \end{displaymath}
  (1): Since $X/Y$ is d.c.i, so $\pdo \Omega_{X/Y,x}\leq 1$ at each
  point $x\in X$, and $i^*(\Omega_{X_r/Y})$ is locally free, it
  follows that $\Gamma_{X/X_r/Y}$ is locally free.  Since $X/Y$ is
  generically smooth it follows that $\Gamma_{X/Y/S}$ is torsion, but
  we want $\Gamma_{X/Y/S}=0$.  Let $I$ be the ideal of $X$ in
  $X_r$. There exist surjections $ I/I^2\xrightarrow{\alpha}
  \Gamma_{X/X_r/Y}\to 0$ and $I/I^2\xrightarrow{\beta}
  \Gamma_{X/X_r/S}\to 0$, such that $\delta \circ \beta = \alpha$.  To
  conclude that $\Gamma_{X/Y/S}=0$ it suffices to see that $\alpha$ is
  injective, so $\delta$ is an isomorphism.  First, $X/Y$ is smooth at
  all associated points, implying $\Ker (\alpha)$ is $0$ at all such
  points. Second, since $X/Y$ is a l.c.i., so $I$ is locally generated
  by a regular sequence and $I/I^2$ is locally free, it follows that
  $\Ker (\alpha)=0$.

  (2): If $X/S$ is d.c.i. then since $i^*(\Omega_{X_r/S})$ is locally
  free, it follows that $\Gamma_{X/X_r/S}$ is locally free. Since
  $\pi$ is smooth at the associated points, it follows that
  $\Gamma_{X/Y/S}$ is $0$ at such points, and since $\Gamma_{X/Y/S}
  \subset \Gamma_{X/X_r/S} $ the assertion follows.
\end{proof}

We have the following composition and decomposition properties.
\begin{theorem}\label{comp-dci}
  Let $X\xrightarrow{f}Y \xrightarrow{g}S$ be a composition of
  dominant morphisms, locally of finite type.
  \begin{enumerate}[label=(\theenumi),ref=(\theenumi)]
  \item If $f$ is l.c.i., $g$ is d.c.i, and $f$ is smooth at all
    associated points of $X$, then $g\circ f$ is d.c.i.
\item Assume that $f$ is generically smooth and $g$ is smooth.  Then
    $g\circ f$ is d.c.i. if and only if $f$ is d.c.i.
\item Assume that $g$ and $g\circ f $ are l.c.i., $f$ is smooth at
  points that map to maximal points of $Y$, and $g$ is smooth at
  maximal points of $f(X)$. Then $\pdo \Omega_{X/Y,x}\leq 2$, $x\in
  X$.
\item Assume that $X$ and $Y$ are Cohen-Macaulay, $f$ is flat and
  generically smooth along each fibre.  Consider the conditions:
  \begin{enumerate}
  \item $g\circ f$ is d.c.i.
  \item $f $ is d.c.i., and $g$ is d.c.i. at all points in $f(X)$.
 \end{enumerate}
  Then $(a)\Rightarrow (b)$, and if $f$ is moreover l.c.i., then
  $(b)\Rightarrow (a)$.
\end{enumerate} 
\end{theorem}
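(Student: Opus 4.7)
My plan is to treat all four parts by analyzing the first fundamental exact sequence
\begin{displaymath}
0 \to \Gamma_{X/Y/S} \to f^{*}(\Omega_{Y/S}) \to \Omega_{X/S} \to \Omega_{X/Y} \to 0,
\end{displaymath}
combining \Theorem{\ref{ferrand-vasconcelos}} (to pass between l.c.i.\ and d.c.i.), \Proposition{\ref{gamma0}} (to force $\Gamma_{X/Y/S}=0$), \Lemma{\ref{diffcomplete-lem}} (for the projective dimension of a pullback), and the standard estimate $\pdo B \leq \max(\pdo A,\pdo C)$ in a short exact sequence $0\to A\to B\to C\to 0$. For (1), the hypotheses let \Proposition{\ref{gamma0}}(1) apply, so $\Gamma_{X/Y/S}=0$ and the four-term sequence collapses to a short exact sequence. \Theorem{\ref{ferrand-vasconcelos}} yields $\pdo\Omega_{X/Y,x}\leq 1$, and \Lemma{\ref{diffcomplete-lem}} applied to $\Omega_{Y/S}$ yields $\pdo f^{*}(\Omega_{Y/S})_x\leq 1$: its flatness hypothesis at images of associated points in $X$ is forced by smoothness of $f$ at such points $\xi$, since then $f(\xi)$ is a maximal (hence generic) point of the integral scheme $Y$, at which $\Omega_{Y/S}$ is a module over a field and hence flat. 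The short exact sequence then yields $\pdo\Omega_{X/S,x}\leq 1$. Part (2) is immediate from the same sequence: smoothness of $g$ makes both $\Gamma_{X/Y/S}=0$ and $f^{*}(\Omega_{Y/S})$ locally free, so $\pdo\Omega_{X/S,x}\leq 1$ if and only if $\pdo\Omega_{X/Y,x}\leq 1$.

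For (3), $\Gamma_{X/Y/S}$ is no longer guaranteed to vanish. The smoothness of $g$ at maximal points of $f(X)$ combined with $g$ l.c.i.\ lets \Theorem{\ref{ferrand-vasconcelos}} be invoked at these points, giving $\pdo\Omega_{Y/S,y}\leq 1$ for $y\in f(X)$; similarly $g\circ f$ l.c.i.\ together with smoothness of $f$ at preimages of maximal points of $Y$ gives $\pdo\Omega_{X/S,x}\leq 1$. Splitting the four-term sequence as $0\to \Gamma_{X/Y/S}\to f^{*}(\Omega_{Y/S})\to K\to 0$ and $0\to K\to \Omega_{X/S}\to \Omega_{X/Y}\to 0$ and chasing projective dimensions (the possibly non-zero $\Gamma_{X/Y/S}$ costs at most one extra unit), one arrives at $\pdo\Omega_{X/Y,x}\leq 2$.

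For (4), Cohen-Macaulayness of $X$ and $Y$ together with flatness of $f$ makes each fiber Cohen-Macaulay, and generic smoothness along fibers then forces $f$ to be smooth at every associated point of $X$ (which coincides with a maximal point in the CM case). For (a)$\Rightarrow$(b), this smoothness plus \Proposition{\ref{gamma0}}(2) applied to the d.c.i.\ morphism $X/S$ yields $\Gamma_{X/Y/S}=0$; restricting to fibers via \Lemma{\ref{flatlemma}} and invoking \Proposition{\ref{drci-flat}} (or its direct argument) then yields $f$ d.c.i., and the resulting fundamental short exact sequence combined with faithful flatness of the local maps $\Oc_{Y,y}\to\Oc_{X,x}$ for $x\in f^{-1}(y)$ transports the bound on $f^{*}(\Omega_{Y/S})_x$ down to $\pdo\Omega_{Y/S,y}\leq 1$. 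The converse (b)$\Rightarrow$(a) under the extra l.c.i.\ hypothesis on $f$ is exactly part (1). I expect the main obstacle to lie in part (3): without vanishing of $\Gamma_{X/Y/S}$ one must track two short exact sequences carefully, and verifying that the smoothness hypotheses at maximal points of $Y$ and $f(X)$ are strong enough to invoke \Theorem{\ref{ferrand-vasconcelos}} at the associated points where the projective-dimension bounds are actually needed requires delicate bookkeeping.
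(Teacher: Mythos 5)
Your treatment of (1) and (2) is correct and coincides with the paper's: \Proposition{\ref{gamma0}} kills $\Gamma_{X/Y/S}$, the four-term sequence becomes short exact, and $\pdo \Omega_{X/S,x}\leq \max(\pdo f^*(\Omega_{Y/S})_x, \pdo\Omega_{X/Y,x})$ does the rest (your justification of the flatness hypothesis in \Lemma{\ref{diffcomplete-lem}} via the image of an associated point is a harmless variant of the paper's direct $\Tor$-vanishing argument). The genuine gap is in (3). You assert that $\Gamma_{X/Y/S}$ ``is no longer guaranteed to vanish'' and propose to chase projective dimensions through the two sequences $0\to \Gamma_{X/Y/S}\to f^{*}(\Omega_{Y/S})\to K\to 0$ and $0\to K\to \Omega_{X/S}\to \Omega_{X/Y}\to 0$, claiming the nonzero $\Gamma_{X/Y/S}$ ``costs at most one extra unit''. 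That is not true: the relevant estimates are $\pdo K_x\leq \max(\pdo f^{*}(\Omega_{Y/S})_x,\ \pdo \Gamma_{X/Y/S,x}+1)$ and $\pdo \Omega_{X/Y,x}\leq \max(\pdo \Omega_{X/S,x},\ \pdo K_x+1)$, so without an a priori bound $\pdo \Gamma_{X/Y/S,x}\leq 0$ you get no bound at all on $\pdo\Omega_{X/Y,x}$ (an infinite, or even $\geq 1$, projective dimension of $\Gamma_{X/Y/S,x}$ destroys the conclusion $\pdo\Omega_{X/Y,x}\leq 2$). The paper's proof of (3) does exactly what you avoid: it invokes \Proposition{\ref{gamma0}} (the hypotheses of (3) make $X/S$ d.c.i.\ via \Theorem{\ref{ferrand-vasconcelos}}, so case (2) of that proposition applies) to conclude $\Gamma_{X/Y/S}=0$, reduces to the single short exact sequence $0\to f^{*}(\Omega_{Y/S})\to\Omega_{X/S}\to\Omega_{X/Y}\to 0$ with both outer terms of projective dimension $\leq 1$ by \Lemma{\ref{diffcomplete-lem}}, and reads off $\pdo\Omega_{X/Y,x}\leq 2$.

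A secondary, more minor, point concerns (4): you cite \Lemma{\ref{flatlemma}} to ``restrict to fibers'', but that lemma transports a projective-dimension bound \emph{from} the fiber \emph{to} the total space, which is the wrong direction at that stage. What the paper actually does is restrict the short exact sequence to the fiber $X_y$ and apply \Lemma{\ref{diffcomplete-lem}} to the closed immersion $j:X_y\to X$ and the module $\Omega_{X/S}$ (using that $g\circ f$ is d.c.i.) to see that each fiber $X_y/k_{Y,y}$ is d.c.i.; only then does \Proposition{\ref{drci-flat}} give that $f$ is d.c.i., and faithful flatness of $\Oc_{Y,y}\to\Oc_{X,x}$ descends the bound to $\Omega_{Y/S,y}$ as you say. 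The overall architecture of your (4) is right, but the tool named for the fiber restriction is the wrong one.
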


\begin{remark}
%  \begin{enumerate}    
%   \item Note that (3) implies a strengthening of
%     \Proposition{\ref{gamma0}}: Assume $X/S$ and $Y/S$ are d.c.i. and
%     that $X/Y$ is generically smooth. Then $\Gamma_{X/Y/S}=0$
%    .
%\item
  In \cite[(5.6), (5.7)]{avramov:lci} there are results that can be
  compared to \Theorem{\ref{comp-dci}}, using
  \Theorem{\ref{ferrand-vasconcelos}}. Making the added assumption
  that $\Oc_{X,x}$ be of finite flat dimension over $\Oc_{Y,\pi(x)}$
  for each $x\in X$, Avramov gets a stronger decomposition property
  for l.c.i. morphisms, namely that $f$ and $g$ are l.c.i. if and only
  if $g\circ f$ is l.c.i. This result depends on two other results
  that are fundamental albeit hard to get: (i) The vanishing of all
  higher Andr\'e-Quillen homology groups characterises
  l.c.i. morphisms; (ii) a certain connecting morphism of the
  Zariski-Jacobi long exact sequence is trivial
  \cite[(4.7)]{avramov:lci}.  Note however that (3) does not rely on
  flatness.
%\end{enumerate}
\end{remark}

\begin{proof} (1): By \Proposition{\ref{gamma0}}, $\Gamma_{X/Y/S}=0$.
  Since $X/Y$ is smooth at an associated point $\xi$, hence is
  flat at $\xi$, it follows that $Tor^1_{\Oc_{Y,\pi(\xi)}}(\Oc_{X,
    \xi},\Omega_{Y/S,\pi(\xi)})=0$. This implies that $\pdo
  \pi^*(\Omega_{Y/S})_x\leq 1$ for each point $x\in X$, since $g$ is
  d.c.i.; see also the proof of \Lemma{\ref{diffcomplete-lem}}. Since
  $\pdo \Omega_{X/Y,x}\leq 1$, it follows from the exact sequence
  (\ref{eq:can-ex}) that $\pdo \Omega_{X/S,x}\leq 1$.

  (2): Since $X/Y$ is smooth at all associated points in $X$ and since
  $f^*(\Omega_{Y/S})$ is locally free, it follows that
  $\Gamma_{X/Y/S}=0$.  Again we get the exact sequence
  \begin{equation}\label{exact-sequence}
    0 \to f^*(\Omega_{Y/S}) \to \Omega_{X/S} \to \Omega_{X/Y}\to 0,
  \end{equation}
implying that $ X/S$ is d.c.i. if and only if $X/Y$ is d.c.i..

  (3): By \Proposition{\ref{gamma0}}, $\Gamma_{X/Y/S}=0$, so we again
  have the  exact sequence (\ref{exact-sequence}). The assertion then follows from
  \Lemma{\ref{diffcomplete-lem}}.

  (4): First note that $X$ being Cohen-Macaulay, all its associated
  points are maximal, and that both (a) and (b) implies
  $\Gamma_{X/Y/S}=0$, by \Proposition{\ref{gamma0}}.  Let $y$ be a
  point in $Y$ and $j: X_y \to X$ and $p: X_y \to \Spec k_{Y,y}$ be
  the canonical morphisms. Then $j^*(\pi^*(\Omega_{Y/S})) = p^*
  (k_{Y,y}\otimes_{\Oc_{Y,y}}\Omega_{Y/S})$ is free, and since
  $j^*(\Omega_{X/Y})= \Omega_{X_y/k_{Y,y}}$ is generically free, while
  $X_y$ contains no embedded points since $X_y$ is Cohen-Macaulay, for
  $X$ and $Y$ are Cohen-Macaulay and $X/Y$ is flat. Therefore we get
  the exact sequence
  \begin{displaymath}
    0 \to j^*(\pi^*(\Omega_{Y/S})) \to j^*(\Omega_{X/S})\to
    \Omega_{X_y/k_{Y,y}}\to 0,
  \end{displaymath}
  hence $\pdo \Omega_{X_y/k_{Y,y},x} = \pdo j^*(\Omega_{X/S})_{x}$ when
  $x\in X_y$.  Now by \Lemma{\ref{diffcomplete-lem}}, $ \pdo
  j^*(\Omega_{X/S})_{x}$ $\leq 1$, hence by \Proposition{\ref{drci-flat}}
  $\pdo\Omega_{X/Y,x}\leq 1$ when $x\in X$.  The exact sequence
  (\ref{exact-sequence}) then implies that $g$ is d.c.i.  This proves
  (a)$\Rightarrow$ (b).  (b)$\Rightarrow$
 (a)  follows from (1).
\end{proof}

% {Purity of branch and critical locus}\label{purityetale}
\section{Purity for general $d_{X/Y}$}\label{sec-pur-general}
For a coherent $\Oc_X$-module $M$ with local presentation $\Oc^m_X
\xrightarrow{\phi} \Oc_X^n \to M \to 0$ we let $I_i(\phi)$ be the
$i$th determinant ideal of $\phi$, assuming $i\leq \min\{m,n\}$, and
the ith Fitting ideal $F_i(M)= I_{n-i}(\phi)$. It is a fundamental
problem to compute the height of $F_i(M)$ for a given $M$, where an
important step is to find an upper bound in the form of refined
versions of Krull's principal ideal theorem. This problem has been
addressed in the literature, where bounds are determined in terms of
the integers $n,m$, the rank of $\phi$ (more precisely the analytic
spread of $M$ plays a role together with some measure of the
singularity defect of $X$), but one should remember that general
bounds of the height in terms of $m$ and $n$ are can be crude for a
given $M$, and will also depend on the presentation of $M$.  The
bounds that we will use arise from minimal presentations and from
exact sequences, based on either the classical Eagon-Northcott formula
involving the first two Betti numbers, but not the Euler number
(generic rank), or a refinement due to Eisenbud, Ulrich, and Huneke,
where the Euler number and the first Betti number appear, but $X$ is
assumed to be regular.

%  The
% latter formula can be generalised to singular $X$ and moreover be
% refined by replacing the Euler number by the analytic spread of $M$,
% but for applications to $M=\Omega_{X/Y}$ I do not know how to estimate
% the analytic spread.

Assume for simplicity that $X$ is an integral scheme. Let
$\beta_{i}(M)= \sup \{\beta_{i}(M_x) \vert \ x\in X\}$ where
$\beta_i(x)= \beta_i(M_x) = \dim_{k_x} Tor^i_{\Oc_{X,x}}(k_x, M_x)$ is
the ith Betti number of the $\Oc_{X,x}$-module $M_x$. When $\pdo M_x <
\infty$ we can define the Euler number by $ \chi(x) =\chi(M_x)= \sum
(-1)^i\beta_i(x)$. Let $\cdots \to G^1_x\to G^0_x \to M_x\to 0$ be a
minimal free resolution, so $\beta_i(x)= \rank G^i$, and the partial
Euler numbers are defined by $\chi_i(x)=\chi_i(M_x)= \sum_{j\geq i}
(-1)^j\beta_j(x)$.  Note that in general $\chi_i(x)\neq \chi_i(\xi)$
when $x$ is a specialisation of the point $\xi$ and $i\geq 1$, while
$\chi(x)= \chi_0(x)= \chi(\xi)$ is the generic rank of $M_x$ and
$(-1)^i\chi_i(x)$ is the generic rank of the $i$th syzygy in a minimal
free resolution of $M_x$, so they are positive integers (see \cite[Th
19.7]{matsumura}).

Since $\beta_0(x)-\beta_1(x)= \chi(x)-\chi_2(x)$ and
$\chi_2(x)= (-1)^2 \chi_2(x)\geq 0$ it follows that 
\begin{displaymath}
  \gamma(x):=  \beta_0(x)-\beta_1(x)\leq \chi(x),
\end{displaymath}
with equality if $\pdo M_x \leq 1$, since then $\chi_2(x)=0$.

\begin{theorem}\label{fittingtheorem}
  \begin{enumerate}\item\label{eagon-northcott}
    If $i < \beta_0(x)- \min\{\beta_0(x), \beta_1(x)\}$, then
    $F_i(M_x)=0$, and if $F_i(M_x)\neq 0$ then
    \begin{displaymath}
      \hto (F_i(M_x)) \leq  (i+1)(i+1- \gamma(x)).
    \end{displaymath}
  \item\label{regular-bound} Assume $X$ is regular. Then
    \begin{displaymath}
      \hto(F_i(M_x))\leq
      \begin{cases}
        (i+1) (i+1-\chi(x))+ \beta_0(x)-i-1, \quad &\gamma(x)\geq 0,\\
        ( i+1 - \gamma(x)) (i+1-\chi(x))+ \beta_0(x)-i-1, \quad &\gamma(x) \leq 0.
      \end{cases}
    \end{displaymath}
  \item Let $0 \to M^1 \to M^2 \to M \to 0$ be a short exact sequence of
    coherent $\Oc_X$-modules.
    \begin{enumerate}\item\label{short-exact-height}
      \begin{displaymath}
        \hto (F_i(M_x))   \leq (i+1)(i+1+\beta_0(M^1_x) +\beta_1(M^2_x)- \beta_0(M^2_x))
      \end{displaymath}
    \item If $X$ is regular, one can replace $\chi (x)$ by $\chi(M_x^2)-\chi(M_x^1)$ in (2).
    \end{enumerate}
\end{enumerate}
\end{theorem}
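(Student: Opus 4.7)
The plan is to reduce everything to two standard height bounds for determinantal ideals: the classical Eagon-Northcott bound, and the Eisenbud-Huneke-Ulrich refinement that becomes available when the ambient ring is regular. For part (1), fix a minimal presentation $\Oc_{X,x}^{\beta_1(x)}\xrightarrow{\phi}\Oc_{X,x}^{\beta_0(x)}\to M_x\to 0$ so that $F_i(M_x)=I_{\beta_0(x)-i}(\phi)$; the vanishing $F_i(M_x)=0$ when $i<\beta_0(x)-\min\{\beta_0(x),\beta_1(x)\}$ is the trivial fact that minors of too large size vanish, while the stated height inequality is the Eagon-Northcott bound $\hto I_r(\phi)\leq (\beta_0(x)-r+1)(\beta_1(x)-r+1)$ with $r=\beta_0(x)-i$. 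The identity $\beta_0(x)-\beta_1(x)=\chi(x)-\chi_2(x)$ is immediate from the definitions of the partial Euler numbers, and $\chi_2(x)=0$ is equivalent to $\pdo M_x\leq 1$, giving equality $\beta_0(x)-\beta_1(x)=\chi(x)$ in that case.

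For part (2), I would use the same minimal presentation. When $\Oc_{X,x}$ is regular the generic rank of $M_x$ reads $\chi(x)=\beta_0(x)-\rank\phi$, and one invokes the refinement of Eisenbud, Huneke, and Ulrich -- which is sensitive to $\rank\phi$ rather than merely to $\min\{\beta_0,\beta_1\}$ -- to deduce the stated inequality after substituting $r=\beta_0(x)-i$. For part (3a), I would manufacture a presentation of $M_x=M^2_x/M^1_x$ from the given data: starting from a minimal presentation $\Oc_{X,x}^{\beta_1(M^2_x)}\xrightarrow{\psi}\Oc_{X,x}^{\beta_0(M^2_x)}\to M^2_x\to 0$, lift a minimal system of $\beta_0(M^1_x)$ generators of $M^1_x\subset M^2_x$ to a map $\Oc_{X,x}^{\beta_0(M^1_x)}\to \Oc_{X,x}^{\beta_0(M^2_x)}$, and concatenate with $\psi$ to obtain
\begin{equation*}
  \Oc_{X,x}^{\beta_1(M^2_x)+\beta_0(M^1_x)}\to \Oc_{X,x}^{\beta_0(M^2_x)}\to M_x\to 0.
\end{equation*}
Applying the Eagon-Northcott bound of part (1) to this (possibly non-minimal) presentation gives (3a) directly, since the Fitting ideal $F_i(M_x)$ is intrinsic to $M_x$ and does not depend on the chosen presentation.

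For part (3b), note that when $X$ is regular all three modules have finite projective dimension over the local rings, so by additivity of the Euler characteristic along the short exact sequence one has $\chi(M_x)=\chi(M^2_x)-\chi(M^1_x)$; substituting this identity into the bound of part (2) applied to $M_x$ itself yields (3b) with $\beta_0(x)=\beta_0(M_x)$ exactly as in part (2). The main obstacle I foresee is conceptual rather than technical: one must remember that the Eagon-Northcott and Eisenbud-Huneke-Ulrich bounds are phrased in terms of minimal Betti numbers but remain valid for any presentation (they are bounds on $\hto I_r(\phi)$ depending only on the matrix $\phi$), so the cruder presentation manufactured in (3a) still delivers the advertised bound. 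Beyond this, the argument is bookkeeping around the substitution $r=\beta_0(x)-i$ and additivity of $\chi$ along the short exact sequence.
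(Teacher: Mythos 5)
Your proposal is correct and follows essentially the same route as the paper: a minimal presentation plus the Eagon--Northcott bound for (1), the Eisenbud--Huneke--Ulrich refinement (phrased via the rank of the presentation matrix, i.e.\ $-\chi_1(x)$) for (2), the concatenated presentation $\Oc_{X,x}^{\beta_1(M^2_x)+\beta_0(M^1_x)}\to\Oc_{X,x}^{\beta_0(M^2_x)}\to M_x\to 0$ together with presentation-independence of Fitting ideals for (3a), and additivity of $\chi$ substituted into (2) for (3b). No gaps.
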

\begin{remark}
  \begin{enumerate} 
  \item Assume that $M_x$ has generic rank $r= \beta_0(\xi)= \chi(x)$ and
    consider the Fitting ideal $F_r(M_x)$. If $ \gamma (x)\geq 0$, (2)
    strengthens (1) when
    \begin{displaymath}
      \beta_0(x) \leq       (r+1) ( \chi_2(x)+1)  \Leftrightarrow         r \geq -1 + \frac{\gamma(x)}2
      + \sqrt{\beta_0(x) + \frac{\gamma(x)^2}4}.
    \end{displaymath}
    If $\pdo M_x\leq 1$ the condition is $\beta_0(x)\leq r+1$, which is
    uninteresting since $F_r(M_x)= \Oc_{X,x}$ when
    $r \geq \beta_0(x)$. If the rank of the second syzygy
    $\chi_2(x)\neq 0$, then (2) is stronger (1) for not too high
    $\beta_0(x)$ or not too low $r$. If $\gamma (x)>0$. Then (2) strenghtens
    (1) if $\beta_0(x) < \frac {\gamma(x)^2}4$ för all $r$. If
    $\beta_0(x) \geq \frac {\gamma(x)^2}4$, then (2) improves (1) when
    \begin{displaymath}
      r \geq \frac{\gamma(x)}2 + \sqrt{\beta_0(x)- \frac{\gamma(x)^2}4}-1
    \end{displaymath}

  \item It is tempting, in the light of (1), to ask under what
    conditions $\hto(F_i(M_x))\leq (i+1)(i+1-\chi (M_x))$ holds when
    $\pdo M _x < \infty $.
\item \ref{short-exact-height} is stronger than \ref{eagon-northcott} when
  \begin{displaymath}
    \chi_2(M_x) \geq \chi_2(M^2_x)-\chi_1(M^1_x).
  \end{displaymath}
 \end{enumerate}
\end{remark}
\begin{proof}
  (1): Let $I_i(x)$ be the $i$th determinant ideal of the homomorphism
  $\phi_x : G^1_x \to G^0_x$, so $F_i(M_x)= I_{\beta_0(x)-i}$. That
  $F_i(M_x)=0$ when $i < \beta_0(x)- \min\{\beta_0(x), \beta_1(x)\}$
  follows since $\rank G^1 = \beta_1(x)$ and $\rank G^0 =
  \beta_0(x)$. The Eagon-Northcott bound \cite{eagon-northcott:height}
  (see also \cite[Th. 3.5]{buchsbaum-rim:II} and \cite[Th.
  13.10]{matsumura})
 \begin{displaymath}
  \hto (I_i)\leq (\min \{m,n\} -i +1)(\max \{m,n\} -i +1),
\end{displaymath}
now implies 
  \begin{displaymath}
    \hto (I_i) \leq (\beta_1(x) -i+1) (\beta_0(x)-i+1).
  \end{displaymath}

  (2): If $X$ is regular, the Eisenbud-Ulrich-Huneke bound \cite[Th.
  A]{eisenbud-huneke-ulrich:heights} implies
  $\hto(I_i)\leq ((-1)^1\chi_1(x)-i+1)(\max\{\beta_0(x), \beta_1(x)\}-i+1) +i-1 $.
  We have therefore two cases. If $\gamma(x) \geq 0$ then
  \begin{align*}
    \hto   F_i(M_x)&\leq (i+1-\chi(x)) (i+1)+ \beta_0(x)-i-1
  \end{align*}
  and if $\gamma(x) \leq 0$, then
  \begin{align*}
    \hto   F_i(M_x)& \leq (i+1-\chi(x)) ( i+1- \gamma(x))+ \beta_0(x)-i.
  \end{align*}
  
  (3):\thetag{a} There is an exact sequence
  $\Oc_{X,x}^{\beta_1(M^2_x)}\to \Oc_{X,x}^{\beta_0(M^2_x)}\to M^2_x\to 0$ and a
  surjection $\Oc_{X,x}^{\beta_0(M^1_x)}\to M^1_x\to 0$. Therefore we can
  assume $m\geq \beta_0(M^1_x)+\beta_1(M^2_x)$ and
  $n\leq \beta_0(M^2_x)$ in the Eagon-Northcott bound, hence
  $\hto (F_i(M_x))= \hto (I_{n-i})\leq (\min \{m,n\} -(n-i) +1)(\max
  \{m,n\} -(n-i) +1)= (i+1)(m-n+i+1) \leq (i+1)(i+1+\beta_0(M^1_x)
  +\beta_1(M^2_x)- \beta_0(M^2_x)) $. \thetag{b} is evident.
\end{proof}

The following interpretation of \Theorem{\ref{fittingtheorem}} is useful:

\begin{corollary}\label{locfreefitting} Let $X$ be an integral N\oe therian
  scheme and $M$ be a coherent $\Oc_X$-module. Let $j: U
  \hookrightarrow X $ be an open subset and put $V= X\setminus
  U$. Assume either of the conditions:
  \begin{enumerate}[label=(\theenumi),ref=(\theenumi)]
  \item $\codim_X V > (\sup_{x\in V}(\beta_1(M_x)- \beta_0(M_x))+
    \rank M+1)(\rank M +1)$.
  \item The projective dimension $\pdo M_x \leq 1$ at each point $x\in
    V$ and $\codim_X V > \rank M +1$.
  \end{enumerate}
  If $j^*(M)$ is locally free, it follows that $M$ is locally free. If
  $X$ moreover satisfies $(S_2)$, then $M= j_*j^*(M)$.
\end{corollary}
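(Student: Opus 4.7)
The plan is to reduce the question to showing that the Fitting ideals $F_{r-1}(M)$ and $F_r(M)$ of $M$, with $r=\rank M$, equal $0$ and $\Oc_X$ respectively; the standard Fitting criterion then yields that $M$ is locally free of rank $r$. The vanishing of $F_{r-1}(M)$ will come for free: since $j^*(M)$ is locally free of rank $r$, the ideal $F_{r-1}(M)\subset \Oc_X$ vanishes at the generic point of $X$, and $\Oc_X$ being torsion-free on the integral scheme $X$ means any coherent ideal vanishing generically must be zero. So it suffices to prove $V(F_r(M))=\emptyset$; and since $F_r(M)\vert_U=\Oc_U$ we already know $V(F_r(M))\subset V$.

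The main step is then to apply \Theorem{\ref{fittingtheorem}}. I would argue by contradiction: suppose $V(F_r(M))\neq \emptyset$ and pick $x$ to be the generic point of one of its irreducible components. Because $F_r(M)_x$ is $\mf_x$-primary in $\Oc_{X,x}$, its height equals $\hto(x)$, and $x\in V$ forces $\hto(x)\geq \codim_X V$. On the other hand, \Theorem{\ref{fittingtheorem}}~(1) applied with $i=r$ gives
\begin{displaymath}
  \hto(F_r(M_x))\leq (r+1)(r+1+\beta_1(x)-\beta_0(x)).
\end{displaymath}
Under hypothesis (1) this right-hand side is at most $(r+1)(r+1+\sup_{y\in V}(\beta_1(M_y)-\beta_0(M_y)))$, contradicting the strict inequality on $\codim_X V$. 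Under hypothesis (2), $\pdo M_x\leq 1$ forces $\chi_2(x)=0$, so $\beta_0(x)-\beta_1(x)=\chi(x)=\rank M$, reducing the bound to $\rank M+1<\codim_X V$, again a contradiction.

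For the final assertion, I would invoke the standard identification: a coherent sheaf $M$ satisfies $M\iso j_*j^*(M)$ whenever $\depth_V M\geq 2$. Both hypotheses force $\codim_X V\geq 2$, and since $M$ has just been shown to be locally free, $\depth M_x=\depth \Oc_{X,x}\geq \min(2,\hto(x))=2$ for $x\in V$ once $X$ is $(S_2)$, so the depth hypothesis is met. The only real content of the proof is already packaged in \Theorem{\ref{fittingtheorem}}; what remains above is the short bookkeeping needed to extract the corollary, the one delicate point being the identification of $\hto(F_r(M)_x)$ with $\hto(x)$ at a generic point of an irreducible component of $V(F_r(M))$.
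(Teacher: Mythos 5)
Your proof is correct and follows essentially the same route as the paper: both reduce the statement to showing $F_{\rank M}(M)=\Oc_X$ and obtain this from the Eagon--Northcott height bound (packaged in Theorem~\ref{fittingtheorem}) applied at a maximal point of $V(F_{\rank M}(M))\subset V$, with case (2) handled by noting $\beta_0-\beta_1=\rank M$ when $\pdo M_x\leq 1$. The only differences are expository: you make explicit the Fitting-ideal criterion (including $F_{\rank M -1}(M)=0$) and the depth argument for $M=j_*j^*(M)$, both of which the paper leaves as evident.
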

\begin{remark} In \cite[\S21.13]{EGA4:4} a couple $(X,V)$ is
  parafactorial if for any open set $\Omega$ the restriction functor
  $\Lc_\Omega \to \Lc\vert_{\Omega\cap U}$ is an equivalence of
  categories of invertible sheaves. In particular, this holds when
  $X=\Spec A$, $A$ is factorial of dimension $\geq 2$ and $ V=
  \{\mf_A\}$ \cite[21.6.13]{EGA4:4}.  By Grothendieck's finiteness
  theorem, if $U$ is Cohen-Macaulay and $j^*(M)$ is locally free, then
  $j_*j^*(M)$ is coherent when $\codim_X V \geq 2$. Assuming $X$ is
  Cohen-Macaulay, $\dim X\geq 2$, and $\codim_X V \geq 2$, we see that
  $j^*(M)$ is the restriction of a locally free sheaf if the maximal
  extension $j_*(j^*(M))$ satisfies either of the conditions (1) or
  (2) in \Corollary{\ref{locfreefitting}}.
\end{remark}
\begin{proof} The last assertion is evident so we have to prove that
  $M$ is locally free when $j^*(M)$ is locally free.  Let $
  \Oc_{X,x}^{m} \xrightarrow{\phi_x} \Oc_{X,x}^{n} \to M_x \to 0$ be a
  presentation at a point $x\in V$, where $m=\beta_1(M_x)$ and
  $n=\beta_0(M_x)$, and let $F_r(M_x)\subset \Oc_{X,x}$ be the $r$th
  Fitting ideal of $M_x$, where $r= \rank M$.  According to the
  Eagon-Northcott bound one gets as in the proof of
  \Theorem{\ref{fittingtheorem}} that $ \hto (F_r(M)) = \hto
  (I_{n-r})\leq (m-n+r+1)(r+1)$.  If $\codim_X V > (m-n+r+1)(r+1) =
  (\sup_{x\in V}(\beta_1(M_x) -\beta_0(M_x)) +\rank M +1)(\rank M +1)$
  it follows that $F_{r}(M)= \Oc_X$. This proves (1).  Assuming $\pdo
  M_x \leq 1$ it follows that the map $\phi_x$ is injective, hence
  $\rank M= \beta_0(M_x)-\beta_1(M_x)$ for each point $x$, implying
  (2).
\end{proof}

The relative embedding dimension of $X/S$ is

\begin{align*}
  \ed_{X/S} &=\beta_0(\Omega_{X/S})= \sup \{\dim_{k_x} k_x\otimes \Omega_{X/S} \ \vert \  x\in X\}\\
  &= \sup \{\embdim_{\Spec k_s} X_s \ \vert \ s\in S \},
\end{align*}
and the ``smoothness defect'' $\delta_{X/S} = \ed_{X/S} - d_{X/S}$ (this
is the regularity defect when $S$ is the spectrum of a perfect field
$k$ and $X= \Spec A$ for a local $k$-algebra $A$).  We also have the
dual notion of defect of tangent space dimension
\begin{displaymath}
  \eta_{Y/S} = \beta_0(T_{Y/S})=\sup \{ \dim_{k_y} k_y\otimes_{\Oc_{Y,y}} T_{Y/S} \ \vert
  \ y \in Y\} - d_{Y/S}.
\end{displaymath}
Clearly, $\eta_{Y/S}\geq 0$, and $\eta_{Y/S}=0$ if and only if
$T_{Y/S}$ is locally free and $Y/S$ is generically smooth.

The image of the tangent morphism is denoted
\begin{equation}\label{im-tan}
  \overline T_{X/S}=\Imo (d\pi: T_{X/S} \to T_{X/S \to
    Y/S} ).
\end{equation}

\begin{theorem}\label{purity of critical} Let $\pi: X/S \to Y/S$
  be a generically smooth morphism of integral N\oe therian
  $S$-schemes.
  \begin{enumerate}[label=(\theenumi),ref=(\theenumi)]
  \item\label{no1} $C_\pi \subset B_\pi$, and if $\Omega_{X/S}$ is
    locally projective, then
    \begin{displaymath}
      \Cc_{X/Y} =   Ext^1_{\Oc_X}(\Omega_{X/Y}, \Oc_X).
    \end{displaymath}
    (see also \Remark{\ref{remark:transpose}})
  \item\label{no2}
    \begin{enumerate}\item 
      If $X/S$ and $Y/S$ are locally of finite type, and $\pi$ is
      locally of finite type, then:
      \begin{displaymath}
        \codim^+_{X} B^{(i)}_\pi \leq      (d_{X/Y}+i+1)(i+1+\chi_2(\Omega_{X/Y}))
      \end{displaymath}
      and
      \begin{gather} \notag \codim^+_{X} B^{(i)}_\pi  \leq
        (d_{X/Y}+i+1)( d_{X/Y}-\rank \Omega_{X/S} +i+1+
        \beta_0(\Vc_{X/Y/S})+ \beta_1(\Omega_{X/S}) ) \\ \notag
        \leq(d_{X/Y}+i+1)(i+1+ \beta_0(\Vc_{X/Y/S})+
        \beta_1(\Omega_{X/S}) -d_{Y/S} ) \\ \tag{A} \leq
        (d_{X/Y}+i+1)( \delta_{Y/S}- \beta_0(\Gamma_{X/Y/S}) +
        \beta_1(\Vc_{X/Y/S})+\beta_1(\Omega_{X/S})+i+1).
      \end{gather}
      The second inequality is an equality when $X/S$ is generically
      smooth.
\item 
    \begin{displaymath}
\codim^+_{X} C^{(i)}_\pi \leq      (d_{X/Y}+i+1)(i+1+\chi_2(\Cc_{X/Y}))
    \end{displaymath}
and
   \begin{gather}\notag
      \codim^+_{X} C^{(i)}_\pi \leq (i+1) (\beta_0({\overline
        T}_{X/S})
      + \beta_1(T_{X/S \to Y/S}) - \rank (T_{X/S \to Y/S})
      \notag+i+1)\\ \notag \leq (i+1)( \beta_0({\overline T}_{X/S}) +
      \beta_1(T_{X/S \to Y/S}) - d_{Y/S}+i+1)\\ \tag{B} \leq (i+1)(
      \eta_{X/S} - \eta_{Y/S} + \beta_1({\overline T}_{X/S}) +
      \beta_1(T_{X/S \to Y/S}) +i+1).
    \end{gather}
    The second inequality is an equality when $Y/S$ is generically
    smooth.

    \end{enumerate}
 \item \label{no3} Assume that $\Omega_{X/S}$ and $\Cc_{X/Y}$ are
    coherent.  If $\overline T_{X/S}$ satisfies $(S_2)$, then
    $\codim^+_X C_\pi \leq 1$.  Assume moreover that $X$ is regular in
    codimension $\leq 1$ and $\Omega_{X/S,x}$ is free when
    $\hto(x)\leq 1$. Then the maximal points of height $1$ in $C_\pi$
    and $B_\pi$ coincide.
  \item\label{no4} Assume that $X/S$ and $Y/S$ are smooth. If
    $\overline {T}_{X/S}$ satisfies $(S_2)$, then
    \begin{displaymath}
\codim^+_X B_\pi    \leq 1.
  \end{displaymath}

  \item \label{no5} Assume $\pi$ is d.c.i., locally of
    finite type, and generically smooth. Then
    \begin{displaymath}
      \codim^+_X B^{(i)}_\pi \leq (d_{X/Y}+i+1)(i+1).
    \end{displaymath}
%  \item\label{no6} If $X$ is regular, then
%    \begin{displaymath}
%      \codim^+_XB_\pi \leq \delta_{X/Y}. 
%    \end{displaymath}
%
  \end{enumerate}
\end{theorem}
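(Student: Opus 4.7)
The plan is to deduce \ref{no5} as an immediate specialization of part \ref{no2}, exploiting that the d.c.i.\ hypothesis forces the partial Euler characteristic $\chi_2(\Omega_{X/Y,x})$ to vanish identically. By the definition recorded in \Section{\ref{pure-critical}}, the set $B^{(i)}_\pi$ is cut out by the Fitting ideal $F_{d_{X/Y}+i}(\Omega_{X/Y})$, so the task reduces to bounding the height of this ideal at each of its points.

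First, I would record two numerical facts about $\Omega_{X/Y,x}$ under the hypotheses. Since $\pi$ is d.c.i., a minimal free resolution has the form $0 \to G^1 \to G^0 \to \Omega_{X/Y,x} \to 0$, so $\beta_j(\Omega_{X/Y,x}) = 0$ for $j \geq 2$ and in particular $\chi_2(\Omega_{X/Y,x}) = 0$. Since $\pi$ is generically smooth, the generic rank of $\Omega_{X/Y}$ equals $d_{X/Y}$, hence $\beta_0(\Omega_{X/Y,x}) - \beta_1(\Omega_{X/Y,x}) = \chi(\Omega_{X/Y,x}) = d_{X/Y}$ at every point $x$ (using that $\pdo \Omega_{X/Y,x} \leq 1$ makes $\chi$ equal to the generic rank everywhere).

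Then I would invoke the Eagon-Northcott bound in Theorem~\ref{fittingtheorem}\ref{eagon-northcott} with $M = \Omega_{X/Y}$ and Fitting index $j = d_{X/Y}+i$ to obtain, at each $x \in B^{(i)}_\pi$,
\[
\hto F_{d_{X/Y}+i}(\Omega_{X/Y,x}) \;\leq\; (d_{X/Y}+i+1)\bigl(d_{X/Y}+i+1 + \beta_1(x) - \beta_0(x)\bigr) \;=\; (d_{X/Y}+i+1)(i+1).
\]
Taking the supremum over the maximal points of $B^{(i)}_\pi$ yields $\codim^+_X B^{(i)}_\pi \leq (d_{X/Y}+i+1)(i+1)$. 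Equivalently, this is precisely part~\ref{no2} evaluated at $\chi_2(\Omega_{X/Y}) = 0$, so the statement is really a corollary of the general bound.

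There is no genuine obstacle once \ref{no2} (or directly Theorem~\ref{fittingtheorem}\ref{eagon-northcott}) is in hand: the only content is the observation that length-one resolutions kill $\chi_2$, and that generic smoothness pins down the generic rank to $d_{X/Y}$. The only mild subtlety is verifying that the formulas from Theorem~\ref{fittingtheorem}\ref{eagon-northcott} apply uniformly at \emph{all} points (not just maximal ones), which is why we use the d.c.i.\ hypothesis pointwise rather than generically.
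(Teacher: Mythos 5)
Your argument for part \ref{no5} is correct and coincides with the paper's: the paper's proof of \ref{no5} is precisely the one-line observation that $\pdo \Omega_{X/Y,x}\leq 1$ forces $\chi_2(\Omega_{X/Y,x})=0$, so that the first inequality of \ref{no2} specializes to $(d_{X/Y}+i+1)(i+1)$, and your direct route through the Eagon--Northcott bound of Theorem~\ref{fittingtheorem} with $\beta_0-\beta_1=\chi=d_{X/Y}$ is the same computation unwound. Note only that your proposal treats part \ref{no5} alone, taking parts \ref{no1}--\ref{no4} (in particular the bound in \ref{no2} that you specialize) as already established.
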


\begin{remark}
  \begin{enumerate}
  \item \ref{no5} is due to Dolgachev \cite{dolgachev:nonsmooth} (when
    $i=0$).  If $\pi$ is generically separably algebraic, then
    $\ref{no5} \Rightarrow \ref{no4}$, by \Lemma{\ref{drsci-smooth}}.
  \item By (3), $\codim^+_X C_\pi \leq 1$ essentially follows when
    Serre's $(S_2)$-property holds for the $\Oc_X$-modules $\overline
    T_{X/S}$ and $\Oc_X$.  If $X/Y$ is generically separably
    algebraic, so $\overline T_{X/S} = T_{X/S}$, then if $\Oc_X$
    satisfies ($S_2$) the module $\overline{ T}_{X/S}$ also satisfies $(S_2)$. We get
    $\codim^+_X B_\pi\leq 1$ in this case only when $X/S$ and $Y/S$
    are smooth, while \ref{no5} gives this for any d.c.i.
  \item If $X/S$ is smooth, then $\beta_1(\Omega_{X/S})=0$. If $Y/S$
    is smooth, then $\beta_1(T_{X/S \to Y/S})=0$. It follows from the
    proof that the last inequalities \thetag{A} and \thetag{B} can be
    improved by using higher Betti numbers, but the formulas are
    perhaps sufficiently long already.
  \end{enumerate}
\end{remark}

  \begin{pfof}{\Theorem{\ref{purity of critical}}} \ref{no1}: 
    Combining the dual of the exact sequences (\ref{eq:can-ex1},
    \ref{eq:can-ex2}), noting that $\Gamma_{X/Y/S}^*=0$ since $\pi$ is
    generically smooth, one gets the exact sequence
    \begin{displaymath}
      0 \to \Cc_{X/Y}\to    Ext^1_{\Oc_X}(\Omega_{X/Y},\Oc_X)\to Ext^1_{\Oc_X}(\Omega_{X/S},\Oc_X),
    \end{displaymath}
    implying $C_\pi \subset B_\pi$; it also implies the other
    assertion when $\Omega_{X/S}$ is locally projective (by
    quasi-coherence).

    \ref{no2}: \thetag{a}: Apply \Theorem{\ref{fittingtheorem}} (1) first
    to the module $M=\Omega_{X/Y}$, considering the Fitting ideal
    $F_{i+d_{X/Y}}(\Omega_{X/Y})$, to get the first inequality, then apply
    it to the exact sequence \eqref{eq:can-ex1} to get the first
    inequality in \thetag{A}. The second inequality follows since
    $-\rank \Omega_{X/S} + d_{X/Y}\leq -d_{X/S} + d_{X/Y}= -d_{Y/S}$, and this
    is an equality when $X/S$ is generically smooth since then
    $\rank \Omega_{X/Y}=d_{X/Y}$. The last inequality follows after
    considering the Betti numbers of the members in the exact sequence
    \eqref{eq:can-ex1}, giving
    $\beta_0(\Vc_{X/Y/S}) \leq \beta_0(\Omega_{Y/S}) - \beta_0(\Gamma_{X/Y/S}) +
    \beta_1(\Vc_{X/Y/S}) = \embdim_S Y - \beta_0(\Gamma_{X/Y/S}) + \beta_1(\Vc_{X/Y/S})
    = \delta_{Y/S}+ d_{Y/S} - \beta_0(\Gamma_{X/Y/S}) + \beta_1(\Vc_{X/Y/S})$.
    \thetag{b}: Again apply \Theorem{\ref{fittingtheorem}} but this
    time to the short exact sequence one gets from \eqref{fund-exact}
    (using $\overline T_{X/S} $). This immediately gives the first
    inequality. The first inequality in \thetag{B} follows since
    $\rank T_{X/S\to Y/S} = s_{Y/S} \geq d_{Y/S}$, and this is an equality
    when $Y/S$ is generically smooth. To get the last inequality
    consider the Betti numbers of the members in the exact sequence,
    giving
    $\beta_0(\overline T_{X/S}) \leq \beta_0(T_{X/S}) - \beta_0(T_{X/Y}) +
    \beta_1(\overline T_{X/S}) = \eta_{X/S} + d_{X/S} - \eta_{X/Y}- d_{X/Y} +
    \beta_1(\overline T_{X/S}) = \eta_{X/S} - \eta_{X/Y} +d_{Y/S} +
    \beta_1(\overline T_{X/S})$.

    \ref{no3}: Clearly $\codim^+_X C_{\pi}\leq 1 $ follows if
    $\Cc_{X/Y}$ has no associated points of height $\geq 2$.  Suppose
    the contrary, that there exists an associated point $x$ of height
    $\geq 2$, so $k_x \subset \Cc_{X/Y,x}$.  Letting $T^a_{X/S\to
      Y/S,x}$ be the pre-image of $k_x$ in $ T_{X/S\to Y/S,x}$ the
    short exact sequence
    \begin{equation}\label{test-seq} 0\to \overline T_{X/S,x}\to
      T_{X/S\to Y/S,x} \to \Cc_{X/Y,x} \to 0
    \end{equation} pulls back to 
    \begin{displaymath} 0\to \overline T_{X/S,x}\to T^a_{X/S\to Y/S,x}
      \to k_x \to 0.
    \end{displaymath}  Since $\overline T_{X/S,x}$ satisfies $(S_2)$,  so $Ext^1_{\Oc_{X,x}}(k_x, \overline T_{X/S,x})=0$,  the above
    sequence is split exact; hence $T^a_{X/S \to Y/S,x}\subset T_{X/S\to Y/S,x}$ has non-zero torsion. But
    $\Oc_{X,x}$ is integral, hence $T_{X/S\to Y/S,x} $ $ =
    Hom_{\Oc_{X,x}}$ $(\pi^*(\Omega_{Y/S})_x, \Oc_{X,x})$ is torsion free, which gives a contradiction.

    To see that points of height $1$ in $C_\pi$ and $B_\pi$ are equal,
    by \ref{no1} it suffices to see that if $x$ is a maximal point of
    $B_\pi$ of height $1$, then it belongs to $ C_\pi$.  Since $X/S$
    is smooth in codimension $\leq 1$ the dual of the exact sequence
    (\ref{eq:can-ex2}) (or apply \ref{no1} again) implies the first
    equality in
    \begin{displaymath} \Cc^\pi_x = Ext^1_{\Oc_X}(\Omega_{X/Y},
      \Oc_X)_x = Ext^1_{\Oc_{X,x}}(\Omega_{X/Y,x}, \Oc_{X,x}) \neq 0;
    \end{displaymath} 
    the second equality follows since $\Omega_{X/Y}$ is
    quasi-coherent. The inequality sign follows since $x\in B_\pi$
    implies that $\Omega_{X/Y,x}$ is not free over the  regular
    ring $\Oc_{X,x}$ of global homological dimension $1$.

    \ref{no4}: Since $\Omega_{X/S}$ and $\Omega_{Y/S}$ are locally
    free, the assertion follows from \Proposition{\ref{dual-crit}} and
    \ref{no3}.

    \ref{no5}: If $\pdo \Omega_{X/Y,x}\leq 1$ then $\chi_2(\Omega_{X/Y,x})=0$, so the assertion follows from \ref{no2}. 
%\ref{no6}: Apply  \Theorem{\ref{fittingtheorem}}, \ref{regular-bound}.
% ,This was proven in \cite{dolgachev:nonsmooth}, but we
%     try to shed some more light on it by giving two slightly different
%     arguments. (a): Use (2) in \Corollary{\ref{locfreefitting}} with
%     $M=\Omega_{X/Y}$ and $\rank M = d_{X/Y}$. (b) (close to
%     Dolgachev's argument using the transposed module) Since $\pi$ is
%     d.c.i. we have locally an exact sequence $0 \to \Oc_X^m
%     \xrightarrow{\phi} \Oc_X^n\to \Omega_{X/Y}\to 0$, where $n-m=
%     d_{X/Y}$ since $X/Y$ is generically smooth.  It follows that the
%     transposed module $D(\Omega_{X/Y})$ is projectively equivalent to
%     $Ext^1_{\Oc_X}(\Omega_{X/Y}, \Oc_X)$, and we have
%     $Ext^i_{\Oc_X}(\Omega_{X/Y},\Oc_X)=0$ when $i\geq 2$.  Therefore
%     maximal points of $B_\pi$ are the same as maximal points of $\supp
%     D(\Omega_{X/Y})$, so $\codim^+_X B_\pi = \codim^+_X \supp
%     D(\Omega_{X/Y})$. Since $D(\Omega_{X/Y}) = \Coker (\Oc_X^n
%     \xrightarrow{\phi^*} \Oc_X^m)$, by the Eagon-Northcott bound (see
%     the proof of \Theorem{\ref{fittingtheorem}}) maximal points $x$ of
%     $\supp D(\Omega_{X/Y})$ satisfy $\hto (x)\leq n-m+1 = d_{X/Y}+1$.
  \end{pfof}
  \begin{remark}
    Assume in \ref{no5} that $X/Y$ is l.c.i.  instead of d.c.i.  (see
    \Theorem{\ref{ferrand-vasconcelos}}). Then $\codim^+_X B_\pi \leq
    d_{X/Y}+1$ follows from the Eagon-Northcott formula, applied to
    the exact sequence $ 0 \to I/I^2 \to \Omega_{X_r/Y}\to
    \Omega_{X/Y}\to 0$, assuming, as one may, that $X/Y$ is regularly
    immersed in a smooth scheme $X_r/Y$, with defining ideal $I$.
  \end{remark}

 \begin{theorem}\label{purity-br-loc} Let $\pi : X/S\to Y/S$ be a
    generically smooth morphism of N\oe therian integral $S$-schemes
    such that $\Omega_{X/S}$ and $\Omega_{Y/S}$ are coherent. 
    \begin{enumerate}
    \item 
  \begin{displaymath}  
    \codim_X^+B_\pi^{(i)}\leq (d_{X/Y}+i+1)(i+1+ \delta_{ Y/S}+ \chi_2(\Omega_{X/S})).
  \end{displaymath}
In particular, if  $X/S$ is d.c.i., then 
  \begin{displaymath}
    \codim_X^+B_\pi^{(i)}\leq (d_{X/Y}+i+1)(i+1+ \delta_{ Y/S}).
  \end{displaymath}
\item Assume that $X/S$ and $Y/S$ are d.c.i.. Then
      \begin{displaymath}
        \codim^+_X B^{(i)}_\pi \leq ( d_{X/Y}+i+1)(i+1 +\beta_1(\Omega_{X/Y})).
      \end{displaymath}
\item Assume that $X/S$ is smooth and each restriction to fibres $X_s
  \to Y_s$, $s\in S$, is generically smooth. Then
      \begin{displaymath}
        \codim_X^+ B_\pi \leq \delta_{X/Y} + d_{X/Y} -1.
\end{displaymath}
    \end{enumerate}
 \end{theorem}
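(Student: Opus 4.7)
The strategy is to bound the height of the Fitting ideal $F_{d_{X/Y}+i}(\Omega_{X/Y,x})$ defining $B^{(i)}_\pi$ by invoking the Eagon--Northcott style inequalities of Theorem~\ref{fittingtheorem}, applied either to $\Omega_{X/Y}$ directly or to the short exact sequence \eqref{eq:can-ex2}, and then to simplify the Betti/Euler invariants by means of generic smoothness.

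For (1), I would apply Theorem~\ref{fittingtheorem}(3)(a) to $0\to \Vc_{X/Y/S}\to \Omega_{X/S}\to \Omega_{X/Y}\to 0$ at index $d_{X/Y}+i$, obtaining
\begin{displaymath}
\hto F_{d_{X/Y}+i}(\Omega_{X/Y,x}) \leq (d_{X/Y}+i+1)\bigl(d_{X/Y}+i+1+\beta_0(\Vc_{X/Y/S,x})+\beta_1(\Omega_{X/S,x})-\beta_0(\Omega_{X/S,x})\bigr).
\end{displaymath}
Since $\Vc_{X/Y/S}$ is a quotient of $\pi^*(\Omega_{Y/S})$, one has $\beta_0(\Vc_{X/Y/S,x})\leq \beta_0(\Omega_{Y/S,\pi(x)})\leq \delta_{Y/S}+d_{Y/S}$. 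Generic smoothness of $X/S$ identifies $\chi(\Omega_{X/S,x})$ with the generic rank $d_{X/S}$ (the case $\pdo=\infty$ being vacuous), giving $\beta_1(\Omega_{X/S,x})-\beta_0(\Omega_{X/S,x}) = \chi_2(\Omega_{X/S,x})-d_{X/S}$. Adding and using $d_{X/Y}+d_{Y/S}-d_{X/S}=0$ collapses the inner factor to $i+1+\delta_{Y/S}+\chi_2(\Omega_{X/S,x})$; the d.c.i.\ hypothesis on $X/S$ kills the $\chi_2$ term. For (2), apply Theorem~\ref{fittingtheorem}(1) directly to $\Omega_{X/Y,x}$ at index $d_{X/Y}+i$ and use $\beta_0(\Omega_{X/Y,x})\geq d_{X/Y}$ (the minimal number of generators dominates the generic rank). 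The d.c.i.\ assumption on both $X/S$ and $Y/S$ ensures that $\pdo \Omega_{X/Y,x}<\infty$, so the bound can equivalently be derived from Theorem~\ref{purity of critical}(\ref{no2})(a) through $\chi_2(\Omega_{X/Y,x})=d_{X/Y}-\beta_0+\beta_1\leq \beta_1$.

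For (3), I would invoke the Eisenbud--Huneke--Ulrich refinement of Theorem~\ref{fittingtheorem}(2): at index $i=d_{X/Y}$, using $\chi(\Omega_{X/Y,x})=d_{X/Y}$, the right-hand side collapses to $\beta_0(\Omega_{X/Y,x})\leq \ed_{X/Y}=\delta_{X/Y}+d_{X/Y}$. The difficulty is that Theorem~\ref{fittingtheorem}(2) needs $\Oc_{X,x}$ regular, which $X/S$ smooth does not supply directly; I would therefore pass to fibres, using that each $X_s/k_s$ is smooth hence regular, each $\pi_s:X_s\to Y_s$ is generically smooth by hypothesis, $B_\pi\cap X_s = B_{\pi_s}$ by \cite[17.8.1, 17.8.2]{EGA4:4}, and the dimension formula $\hto_X(x)=\hto_S(\pi_S(x))+\hto_{X_s}(x)$ for smooth $X/S$, to assemble the fibrewise Eisenbud--Huneke--Ulrich bounds into the global bound on $\codim_X^+ B_\pi$. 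The main obstacle is precisely this fibrewise bookkeeping in (3); parts (1) and (2) are essentially algebraic manipulations of Betti numbers and Euler relations in the fundamental exact sequences \eqref{eq:can-ex1} and \eqref{eq:can-ex2}.
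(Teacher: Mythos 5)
Your proposal matches the paper's proof essentially step for step: part (1) is Theorem~\ref{fittingtheorem}(3) applied to the fundamental exact sequence, with $\beta_0(\Vc_{X/Y/S,x})\leq \beta_0(\pi^*(\Omega_{Y/S})_x)\leq \delta_{Y/S}+d_{Y/S}$ and $\beta_1(\Omega_{X/S,x})-\beta_0(\Omega_{X/S,x})=\chi_2(\Omega_{X/S,x})-d_{X/S}$, and part (3) is exactly the paper's reduction to the regular fibres $X_s$, the Eisenbud--Huneke--Ulrich bound collapsing to $\beta_0(\Omega_{X_s/Y_s})=d_{X_s/Y_s}+\delta_{X_s/Y_s}$, and the supremum over $s\in S$ (the paper is no more explicit than you are about the fibrewise height bookkeeping there). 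The only deviation is in (2), where the paper routes through $\pdo\Omega_{X/Y}\leq 2$ to get $\chi_2(\Omega_{X/Y,x})=-\delta_{X/Y,x}+\beta_1(\Omega_{X/Y,x})\leq\beta_1(\Omega_{X/Y})$ and then invokes Theorem~\ref{purity of critical}(2), whereas your direct Eagon--Northcott application using only $\beta_0(\Omega_{X/Y,x})\geq d_{X/Y}$ reaches the stated $\beta_1$ bound without any finiteness of projective dimension --- a harmless, indeed slightly cleaner, shortcut.
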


 \begin{pf}
(1):   We have the exact sequence
\begin{displaymath}
  \pi^*(\Omega_{Y/S})\to \Omega_{X/S}\to \Omega_{X/Y}\to 0.
\end{displaymath}
Noting that $\beta_1(\Omega_{X/S,x})-\beta_0(\Omega_{X/S,x}) =
-\chi(\Omega_{X/S,x})+ \chi_2(\Omega_{X/S,x})= - d_{X/S}+
\chi_2(\Omega_{X/S,x})$ and $\beta_0(\pi^*(\Omega_{Y/S})) \leq
\delta_{ Y/S} + d_{Y/S}$, \Theorem{\ref{fittingtheorem}}, (3), implies
the first assertion.  If $X/S$ is d.c.i. (if we like, by
\Proposition{\ref{gamma0}} we can then insert $0\to $ to the left in
the exact sequence), then $\chi_2(\Omega_{X/S})=0$, implying the
second assertion.

(2): By \Theorem{\ref{comp-dci}} $\pdo \Omega_{X/Y}\leq 2$,
   hence $\chi_2(\Omega_{X/Y,x})= \beta_2(\Omega_{X/Y,x})=
   \chi(\Omega_{X/Y,x})-\beta_0(\Omega_{X/Y,x})+
   \beta_1(\Omega_{X/Y,x})= -\delta_{X/Y,x}+ \beta_1(\Omega_{X/Y,x})\leq \beta_1(\Omega_{X/Y})
$,
   so the assertion follows from \Theorem{\ref{purity of critical}},
   (2).

   (3): Since $X/S$ is smooth, each fibre $X_s$ is regular, and since
   $X_s\to Y_s$ is generically smooth, so $\Omega_{X_s/Y_s}$ has generic rank
   $d_{X_s/Y_s}$. By \Theorem{\ref{fittingtheorem}, (2), $i=0$}, with
   $M= \Omega_{X_s/Y_s}$, we get
   $\codim^+_{X_s} B_{X_s/Y_s}\leq 1-d_{X/Y} + \beta_0(\Omega_{X_s/Y_s}) -1 \leq
   \beta_0(\Omega_{X_s/Y_s}) - 1 =d_{X_s/Y_s} + \delta_{X_s/Y_s}-1$. Since
   $\codim_X^+B_\pi \leq \sup\{ \codim^+_{X_s} B_{X_s/Y_s}\ $
   $ \vert \ s\in S \}$,
   $d_{X/Y}= \sup \{d_{X_s/Y_s} \ \vert \ s\in S\}$ (see {\it generalities})
   and $\delta_{X/Y} = \sup \{\delta_{X_s/Y_s} \ \vert \ s\in S \}$ the assertion
   follows.
%
    % Since $B_\pi = \supp \Omega_{X/Y}$, a maximal point of $B_\pi$
    % is an associated point of $\Omega_{X/Y}$, so $\depth
    % \Omega_{X/Y,x}=0$. , so $\pdo \Omega_{X/Y,x}=1< \infty$, hence
    % by Auslander-Buchsbaum's formula,
    % \begin{displaymath}
    %   0=\depth \Omega_{X/Y,x} = \depth \Oc_{X,x} - \pdo
    %   \Omega_{X/Y,x} = \depth \Oc_{X,x}-1\geq \min (\hto (x), 2)-1.
    % \end{displaymath}
    % Therefore $\hto (x)\leq 1$.  since $X/Y$ is d.c.i. and $X$
    % satisfies $(S_2)$, Consider the short exact sequence
    % (\ref{eq:can-ex2}) and assume, on the contrary, that there
    % exists a maximal point $x$ in $B_\pi$ of height $\hto (x)\geq
    % 3$.  The point $x$ is thus associated for $\Omega_{X/Y}$, so we
    % have an injective map $k_x \to \Omega_{X/Y,x}$. Letting
    % $\Omega^a_{X/S,x}$ be the pre-image of $k_x$ in $\Omega_{X/S,x}$
    % in the sequence (\ref{eq:can-ex}), we get the exact sequence
    % \begin{displaymath} 0 \to \Vc_{X/Y/S,x}\to \Omega^a_{X/S,x} \to
    %   k_x \to 0.
    % \end{displaymath}
    % Since $\Omega_{Y/S}$ is locally free and $\pi $ is generically
    % smooth, so $\Gamma_{X/Y/S}=0$ since it is a torsion submodule of
    % the locally free module $\pi^*(\Omega_{Y/S})$, it follows that
    % $\nu_{X/Y/S}=\pi^*(\Omega_{Y/S})$ is locally free. Since $\hto
    % (x)\geq 3$, Auslander-Buchsbaum's formula implies $\depth
    % \Vc_{X/Y/S,x}\geq 2$; therefore the above sequence splits; hence
    % $x$ is an associated point of $\Omega_{X/S}$.  But since $X/S$
    % is a d.c.i., again by Auslander-Buchsbaum's formula $\depth
    % \Omega_{X/S,x} \geq 2 \geq 1$, hence $x$ cannot be an associated
    % point. This gives a contradiction.
  \end{pf}

  \section{Purity when $d_{X/Y}=0$} \label{reldim0}
  \Theorem{\ref{purity-br-loc}} and \ref{no4}, \ref{no5} in \Theorem{
    \ref{purity of critical}} contain sufficient conditions to imply
  $\codim^+_X B_\pi \leq 1 $ when the relative dimension
  $d_{X/Y}=0$. These results rely mainly on establishing that $\pi$ be
  d.c.i., while \Lemma{\ref{drsci-smooth}} and
  \Proposition{\ref{drci-flat}} give sufficient, but rather
  restrictive conditions ensuring this. We aim for more precise
  results when $d_{X/Y}=0$, which is possible since maximal associated
  points of $\Omega_{X/Y}$ of high height cannot occur if
  $\Omega_{X/S}$ is void of associated points of high height, and if
  moreover $\Vc_{X/Y/S}$ has depth $\geq 2$ at such points. On the one
  hand, it is quite difficult to find upper bounds on the height of
  the associated points of $\Omega_{X/S}$. For instance, the rather
  natural assumption that $X$ satisfies ($S_2$) and $(R_1)$ does not
  imply that $\Omega_{X/S}$ satisfies $(S_1)$, and hence that
  $\Omega_{X/S}$ is torsion free since $X$ is integral. On the other
  hand, by the Auslander-Buchsbaum formula, a point $x$ cannot be
  associated to $\Omega_{X/S}$ if
  $\pdo_{\Oc_{X,x}} \Omega_{X/S,x} < \depth \Oc_{X,x}$.

  Purity for {\it finite}\/ morphisms $\pi$ has been intensely
  studied. It started with Zariski \cite{zariski:purity} who proved
  that $\codim_X^+ B_\pi \leq 1$ when $X/k$ and $Y/k$ are of finite
  type over a perfect field $k$, $X$ is normal, and $Y$ regular.
  Nagata \cite{nagata:purity} proved $\codim_X^+ B_{\pi}\leq 1$ when
  $Y$ is regular and $X$ is normal.  Auslander \cite{auslander} gave a
  module theoretic proof of Nagata's result.  Grothendieck states
  purity in the following way \cite{SGA2}: A local N\oe therian ring
  $(A,\mf)$ is pure if the restriction map of \'etale covers $Et(\Spec
  A)\to Et(\Spec A \setminus\mf)$ is an equivalence of categories, and
  a scheme $Y$ is pure at a point $y$ if $( \Oc_{Y,y}, \mf_y)$ is
  pure. Grothendieck proves, using Lefshetz conditions on
  $\Oc_Y$-modules for $Y= \Spec A$, that if $A$ is a N\oe therian
  complete intersection of dimension $ \geq 3$, then $A$ is pure
  (regular N\oe therian rings of dimension $\geq 2$ are pure).  It
  would we interesting to find a proof of Grothendieck's theorem such
  that etale covers of $\Spec A \setminus \mf$ can be constructively
  extended to an etale cover of $\Spec A$, as one can do when $ A$ is
  regular of dimension $2$.  According to
  \Theorem{\ref{purity-br-loc}} this is equivalent to finding an
  extension $X\to \Spec A$ of an etale morphism $X_0 \to \Spec A
  \setminus \{\mf\}$ such that $X$ is d.c.i.

Grothendieck's theorem implies the following result.

\begin{proposition}\label{cutkosky}
  Let $\pi: X\to Y$ be a finite morphism of normal N\oe therian
  integral schemes, where $Y$ is a local complete intersection. Then
  \begin{displaymath}
    \codim^+_X B_\pi \leq 2.
  \end{displaymath}
\end{proposition}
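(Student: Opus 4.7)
\begin{pf}[Proposal]
My plan is to argue by contradiction and appeal to Grothendieck's purity theorem for local complete intersections of dimension $\geq 3$ (SGA2), which asserts that the functor $\mathrm{Et}(\Spec A) \to \mathrm{Et}(\Spec A \setminus \{\mf_A\})$ is an equivalence for such $A$.

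Assume $\codim^+_X B_\pi \geq 3$ and pick a maximal point $x \in B_\pi$ with $\hto(x) \geq 3$. Set $y = \pi(x)$. Since $\pi$ is finite and $X$, $Y$ are normal integral (so heights are preserved under $\pi$), $\hto(y) = \hto(x) \geq 3$, and $A := \Oc_{Y,y}$ is a local complete intersection of dimension $\geq 3$. After replacing $Y$ by $\Spec A$ and $X$ by $X\times_Y\Spec A$, and localizing at a single prime of $B := \Oc_X$ above $y$, we may assume $X = \Spec B$ with $B$ local normal and $\pi^{-1}(y) = \{x\}$. I would then show $B_\pi\cap\Spec B\subset\{x\}$: any branch point $x'$ whose image $\pi(x')$ properly generizes $y$ would, via the surjection $\overline{\{x'\}}\to\overline{\{\pi(x')\}}\ni y$ coming from $\pi$ being finite (hence closed), yield some $x''\in\overline{\{x'\}}\cap\pi^{-1}(y)=\{x\}$, making $x'$ a proper generization of $x$ in $B_\pi$, contradicting maximality. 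Hence $V := \Spec B\setminus\{x\}\to U := \Spec A\setminus\{\mf_A\}$ is finite, generically \'etale, and unramified.

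The pivotal step is to promote ``unramified'' to ``\'etale'' on $V\to U$. I would do this by induction on the height $k$ of primes $\pf\subsetneq\mf_A$: for $k=1$, $A_\pf$ is a DVR and finite torsion-free unramified is automatically \'etale; for $k=2$, $B_\pf$ is $S_2$ of dimension $2$ over the Cohen-Macaulay $A_\pf$, hence Cohen-Macaulay, hence flat by miracle flatness, hence \'etale; for $k\geq 3$, the induction provides an \'etale cover on $\Spec A_\pf\setminus\{\pf\}$, and Grothendieck's purity applied to the c.i. localization $A_\pf$ (which has dimension $k\geq 3$) extends this to an \'etale cover of $\Spec A_\pf$, which must equal $B_\pf$ by a reflexivity/depth argument ($B_\pf$ is $S_2$ over $A_\pf$, and the \'etale extension is reflexive of depth $\geq 2$).

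With $V\to U$ \'etale, a final application of Grothendieck's purity to $A$ itself extends it to a finite \'etale cover $V'\to\Spec A$. Since $V'$ is flat finite over the CM $A$, it has depth $\geq\dim A\geq 3$, so $V' = j_* j^* V'$ for $j : U\hookrightarrow\Spec A$; similarly $B$ is $S_2$ over $A$ (normal of dimension $\geq 3$), so $\Spec B = j_*(V)$. Hence $V' = \Spec B$, so $\pi : \Spec B\to\Spec A$ is finite \'etale, contradicting $x\in B_\pi$. I anticipate the main obstacle to be the middle step: carefully executing the induction that shows $V\to U$ is \'etale, which requires combining miracle flatness at codimension $2$ with repeated applications of Grothendieck's purity at higher codimensions, and checking at each stage that the depth/$S_2$-comparison actually identifies the two finite covers.
\end{pf}
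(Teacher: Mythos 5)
Your strategy is the same as the paper's: localize at a putative branch point $x$ of height $\geq 3$, show the morphism is \'etale off the closed fibre, and apply Grothendieck's purity theorem for complete intersections of dimension $\geq 3$ to force $\Omega_{X/Y,x}=0$, a contradiction. The paper disposes of your ``pivotal'' middle step in one line: since $Y$ is normal and $X$ is integral, a quasi-finite dominant morphism is \'etale wherever it is unramified \cite[Cor. 18.10.3]{EGA4:4}, and unramifiedness away from $x$ is immediate because $x$ is taken to be a \emph{maximal associated point} of $\Omega_{X/Y}$.

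Two steps of your argument, as written, do not work. (i) The height-$2$ case of your induction invokes miracle flatness, which requires the base to be \emph{regular}; here $A_{\frak p}$ is only a complete intersection, and a normal l.c.i.\ scheme can be singular in codimension $2$ (e.g.\ the vertex of $\Spec k[x,y,z]/(xy-z^2)$), in which case ``finite and CM of the same dimension over a CM base'' does not imply flat. The conclusion you want there is nevertheless true --- by the normality fact above --- and that same fact makes the entire induction, including the repeated purity applications at $k\geq 3$, superfluous. (ii) After localizing $X$ at a \emph{single} prime over $y$, the morphism $\Spec B \to \Spec A$ is only quasi-finite, not finite (e.g.\ $\ZZ_{(5)}\to \ZZ[i]_{(2+i)}$ is not module-finite), so your $V\to U$ is not a finite \'etale cover and cannot be fed into the purity equivalence $Et(\Spec A)\to Et(\Spec A\setminus\{\mf_A\})$; you must keep the whole semilocal ring of $X\times_Y\Spec A$, which is finite and normal over $A$ and unramified over $U$ by the maximality of $x$, and then identify it with the \'etale extension supplied by purity via the $S_2$ comparison you sketch at the end. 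With these two repairs your proof coincides with the paper's.
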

A somewhat more involved proof of this assertion, assuming $Y$ is
excellent, is due to Cutkosky \cite[Th. 5]{cutkosky}.
\begin{proof}
  By normality, $\pi$ is formally \'etale if and only if it is
  formally unramified, i.e.  $\Omega_{X/Y}=0$.  If $x$ is a maximal
  associated point of $\Omega_{X/Y}$ we argue that $\hto (x)\leq 2$.
  Suppose $\hto (x)\geq 3$.  Points $x_1\in \Spec \Oc_{X,x} \setminus
  {\mf_x}$ specialise to the closed point $x$, and since $x$ is a
  maximal associated point it follows that $\Spec \Oc_{X,x} \setminus
  \{\mf_x\}$ contains no associated point for $ \Omega_{X/Y}$.
  Therefore $\Omega_{X/Y,x_1}= 0$ when $x_1 \in \Spec \Oc_{X,x}
  \setminus \{\mf_x\}$, i.e. the morphism $\Spec \Oc_{X,x}\setminus
  \{\mf_x\}\to \Spec \Oc_{Y,\pi(x)}\setminus \{\mf_{\pi(x)}\}$ is
  \'etale.  By Grothendieck purity described above it follows that
  $\Omega_{X/Y,x}=0$, contradicting the assumption that $x$ is an
  associated point.
\end{proof}

Let $D_\pi= \pi(B_\pi)$ be the discriminant set of a finite and
generically separable morphism $\pi: X\to Y$, so $\codim^+_Y D_\pi$ $
= \codim^+_X B_\pi$. Assume $Y$ is a closed subscheme of a regular
scheme $Y_r/k$ over a perfect field $k$.  Perhaps a starting point for
Faltings in \cite [Th.  2]{algebraisation:faltings} was that an
\'etale covering of $Y\setminus D_\pi$ extends to an \'etale covering
of a formal neighbourhood $\hat U$ of $Y\setminus D_\pi$ in $U=Y_r
\setminus D_\pi$ \cite[Exp. X, Prop. 1.1]{SGA2}. He shows that such an
\'etale covering, given by a coherent $\Oc_{\hat U}$-algebra, actually
comes by the completion of a coherent $\Oc_{Y_r}$-algebra $\Ac$
(possibly ramified) when $\codim_Y^-D_\pi$ is sufficiently high, and
notices that $\Ac$ is normal, so by Zariski-Nagata-Auslander purity it
is actually unramified.  The allowed size of $D_\pi$ is $\codim^-_Y
D_\pi \geq \delta_{Y/k} +2$ expressed in
\cite [Th.  2]{algebraisation:faltings}, where the regularity defect
is $\delta_{Y/k} =\sup \{\dim_{k_y} \mf_y/\mf_y^2 - \hto (y) \ \vert \
y\in D_\pi\}$.  This readily implies that if each maximal point $x$ in
$B_\pi$ satisfies $\hto (x) \geq \delta_{Y/k} +2$, then actually $B_\pi
= \emptyset$.  Therefore
\begin{displaymath}\tag{F-C}
  \codim_X^+ B_\pi \leq \delta_{Y/k} +1.
\end{displaymath}
This was observed by Cutkosky \cite[Th. 6]{cutkosky}.
\Theorem{\ref{purity-br-loc}, (1),} generalises this inequality to
positive relative dimensions $d_{X/Y}\geq 0$, making the added
assumption that $X/S$ be d.c.i., or (perhaps more generally)
$\chi_2(\Omega_{X/S})=0$. Compare to \Theorem{\ref{purity-br-loc},
  (3)},  which gives $ \codim_X^+ B_\pi \leq \delta_{X/Y}$ when $X/k$ is
smooth. We can make another comparison to
\Theorems{\ref{purity-br-loc}}{\ref{purity of critical}} , putting
$d_{X/Y}=0$, which improve \thetag{F-C} when either
\begin{align*}
  \delta_{Y/k} &> \delta_{Y/S} + \chi_2(\Omega_{X/S}), \quad   \text{ or } \\
\delta_{Y/k} & > \delta_{Y/S}- \beta_0(\Gamma_{X/Y/S}) +
  \beta_1(\Vc_{X/Y/S})+\beta_1(\Omega_{X/S}).
\end{align*}
The latter inequality holds for example when $Y/k$ is non-smooth
(i.e. $Y$ is non-regular) so $\delta _{Y/ k} >0$, while assuming $X/S$
is smooth, $\Gamma_{X/Y/S}= 0$, and $\delta_{Y/S} +
\beta_1(\pi^*(\Omega_{Y/S})) < \delta_{Y/ k} $ (``$Y/S$ is more smooth
than $Y/k$''). If $X/S$ and $Y/S$ are smooth, and $X/Y$ is generically
smooth, one gets $0$ in the right-hand side of the second inequality,
but a bound in this situation, however, also follows from \ref{no5} in
\Theorem{\ref{purity of critical}} and \Lemma{\ref{drsci-smooth}}.
% Cutkosky also generalised Grothendieck's purity theorem, still using
% Lefshetz conditions, showing that $\codim^+_X B_\pi \leq 2$ follows if
% $X$ and $Y$ are normal and $Y$ is a local complete intersection at
% points of height at least equal to the deviation of $Y$ from being a
% complete intersection.  Kantorovitz \cite{kantorovitz} proved
% $\codim_X^+ B_\pi\leq 1$ for normal $X$ and $Y$, assuming $\Oc_{X,x}$
% is of finite projective dimension over $\Oc_{\pi(x),Y}$ for each $x$.
% See also \cites{scheja:fortzetsungderivationen,altman-kleiman}.

The purity results in \cite{zariski:purity,nagata:purity,
  auslander,SGA2,cutkosky,algebraisation:faltings,kantorovitz} apply
to finite morphisms, while \Theorems{\ref{purity of
    critical}}{\ref{purity-br-loc}} apply  to a
much larger class of morphisms, although the conclusion is weaker than
in the purity results of Grothendieck-Cutkosky (and
Zariski-Nagata-Auslander-Faltings) in the finite case when the
morphism $X\to Y$ cannot be fibred over $S$ as stated ($X/S$ is
required to be d.c.i. to rule out the existence of associated points
of high height for $\Omega_{X/S}$). On the other hand, van der
Waerden's theorem \cite{EGA4:4}*{Th.  21.12.12} states that
$\codim^+_X B_\pi \leq 1$ when $\pi: X\to Y$ is locally of finite type
and {\it birational}, and $Y$ moreover satisfies the following
condition (see also \cite{brenner:affine}):
\begin{description}
\item [(W)] If $T $ is an irreducible closed subset with $\codim^+_Y T
  \leq 1$, then the inclusion morphism $Y\setminus T \to Y$ is affine.
\end{description} The condition ({\bf W}) is satisfied in particular
when $Y$ is normal and its local divisor class groups are torsion,
i.e.\ $Y$ is $\Qb$-factorial.

It is convenient to single out the following class $(\bf F)$ of
morphisms:

\begin{description}
\item[(\bf F)] $\pi:X/S\to Y/S$ is dominant, generically separably
  algebraic, and locally finite type,  and the schemes $X,Y$ and $S$ are integral and N\oe
  therian.
\end{description}

\begin{remark}\label{bir-fin} The remark \cite [Rem.
  21.12.14,(v)]{EGA4:4} contains a discussion of the possibility to combine
  van der Waerden purity with Zariski-Nagata-Auslander purity. This
  problem was resolved by Gabber when $Y$ is regular.
  \end{remark}
  \begin{theorem}[Gabber \cite{stacks-tag-0EA3}]\label{gabber}
    Let $f \colon X \to Y$ be a morphism essentially of finite type from
    a normal scheme $X$ to a locally noetherian regular scheme $Y$.
    Assume that $f$ is essentially étale at all points of $X$ of
    codimension $\leq 1$. Then $f$ is essentially étale.
  \end{theorem}
  We can try go further not assuming that $Y$ be regular, and will
  generalize in three theorems van der Waerden's theorem to certain
  morphisms in the class $(\bf F)$. \Theorem{\ref{example-th}} sheds
  more light on \Remark{\ref{bir-fin}} by showing what is needed to
  make the finite and birational purity theorems work together.
  Morally, one needs to know that $X$ is (close to) UFD or that it be
  d.c.i., but for the remaining results such conditions are replaced
  by depth conditions, and we make no reference to van der Waerden's
  theorem. \Theorem{\ref{zariski-rel}} can be thought of as a
  characteristic $0$ relative version of Zariski-Nagata-Auslander
  purity (smooth $Y/S$), generalised to morphisms $\pi$ that need not be
  finite, but induce algebraic extensions of function fields.
  \Theorem{\ref{simult}} applies to certain non-smooth bases $Y/S$
  also in positive characteristic, at the price of a higher depth
  assumption on the source $X$.

\begin{theorem}\label{example-th}
  Let $\pi: X/S \to Y/S$ be a morphism of the type $(\Fb)$.  Assume that $Y$ is pure at all points of height $\geq 2$, and
  that $X$ and $Y$ satisfies the condition $({\bf W})$. Then
  $\codim^+_X B_\pi \leq 1$.
\end{theorem}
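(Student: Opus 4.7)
Following the blueprint of Remark~\ref{bir-fin}, I would factor $\pi$ through the normalization of $Y$ in the function field $K(X)$. Since $\pi$ is in class $(\Fb)$, this normalization $g\colon Y' \to Y$ is finite, $Y'$ is integral and normal, and $\pi = g\circ h$ where $h\colon X \to Y'$ is birational and locally of finite type. As $\pi$ is \'etale wherever both $g$ and $h$ are, one has $B_\pi \subseteq B_h \cup h^{-1}(B_g)$, so the proof reduces to showing $\codim^+_X B_h \leq 1$ and $\codim^+_X h^{-1}(B_g) \leq 1$.

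The finite piece is handled exactly as in Proposition~\ref{cutkosky}, but now invoking the stronger hypothesis that $Y$ is pure at \emph{all} points of height $\geq 2$ (rather than $\geq 3$). If some maximal associated point $y'$ of $\Omega_{Y'/Y}$ had $\hto(y') \geq 2$, then $g$ would be formally \'etale on the punctured spectrum of $\Oc_{Y',y'}$; normality of $Y'$ together with Grothendieck purity of $(Y, g(y'))$ would extend this \'etale cover across the closed point, yielding $\Omega_{Y'/Y,y'}=0$ and thus a contradiction. Hence $\codim^+_{Y'} B_g \leq 1$.

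For the birational piece $h$, I would apply van der Waerden's theorem \cite{EGA4:4}*{Th.~21.12.12}, which requires $Y'$ to satisfy $({\bf W})$. My plan is to transfer $({\bf W})$ from $Y$ to $Y'$ along the finite $g$: given an irreducible closed $T\subset Y'$ of codimension $\leq 1$, the image $g(T)\subset Y$ is irreducible closed of the same codimension, so $({\bf W})_Y$ gives $Y\setminus g(T)\to Y$ affine, and base-changing along the affine morphism $g$ yields the affine inclusion $Y'\setminus g^{-1}(g(T))\to Y'$; a gluing argument using the finitely many other irreducible components of $g^{-1}(g(T))$ (each of codimension $\leq 1$ in $Y'$) then promotes this to $Y'\setminus T\to Y'$ affine. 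Van der Waerden applied to $h$ then gives $\codim^+_X B_h \leq 1$. Finally, for $h^{-1}(B_g)$: on the big open of $Y'$ where $h$ is an isomorphism the codimension transfers directly from the first step, and for any hypothetical codimension-$\geq 2$ component of $h^{-1}(B_g)$ lying over the exceptional locus of $h$, the hypothesis $({\bf W})_X$ supplies an affine complement in $X$ whose existence clashes with the affine structure on $Y'\setminus g^{-1}(g(B_g))$ produced in the transfer step.

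The main obstacle is the descent of $({\bf W})$ from $Y$ to $Y'$: base change only directly yields affineness of the strictly smaller open $Y' \setminus g^{-1}(g(T))$, and extending to $Y'\setminus T$ is delicate because $g^{-1}(g(T))$ contains further irreducible components over $g(T)$ that also lie in $Y'$ and have codimension $\leq 1$. A secondary (but easier) difficulty is the exceptional-locus analysis of $h^{-1}(B_g)$ in the final step, where $({\bf W})_X$ is the only available substitute for direct birational invariance of codimension along the exceptional locus.
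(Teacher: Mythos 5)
Your reduction to the factorisation $\pi=g\circ h$ through the normalisation $Y'$ of $Y$ in $K(X)$ runs into exactly the obstruction that Remark~\ref{bir-fin} and the Example following it were written to document, and the gap you flag as ``delicate'' is in fact fatal. The condition $(\Wb)$ does \emph{not} descend from $Y$ to $Y'$ along the finite morphism $g$: in that Example $Y=\Ab^3_k$ is regular (hence pure at all points of height $\geq 2$ and satisfying $(\Wb)$), $X=\Ab^3_k$ satisfies $(\Wb)$, yet $Y'=V(x_1x_2-x_3x_4)$ has $\codim^+_{Y'}B_g=1$ while $Y'\setminus B_g$ is not affine, so $Y'$ violates $(\Wb)$. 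Your gluing step cannot be repaired --- the union of an affine open with the extra components of $g^{-1}(g(T))$ need not be affine --- and, more damningly, your intermediate target $\codim^+_XB_h\leq 1$ is itself false in that same example, where $\codim^+_{\Ab^3_k}B_h=2$. The theorem survives there only because $B_h\subset h^{-1}(B_g)=B_\pi$; the two loci you propose to bound separately cannot be bounded separately.

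The paper's proof takes a different, local route that never invokes $Y'$. Given a maximal point $x\in B_\pi$ of height $\geq 2$ with $y=\pi(x)$, purity of $Y$ at $y$ extends the restriction of $\pi$ over the punctured spectrum $\Spec \Oc_{Y,y}\setminus\{\mf_y\}$ to an \'etale morphism $E(y)\to\Spec\Oc_{Y,y}$, which is birational to $X(y)=\pi^{-1}(\Spec\Oc_{Y,y})$. Van der Waerden's theorem is then applied to the two projections from the graph $Z$ of this birational correspondence: the key point is that $(\Wb)$ \emph{is} inherited by $E(y)$ because $E(y)\to\Spec\Oc_{Y,y}$ is \'etale (not merely finite and birational onto its image), and $X$ satisfies $(\Wb)$ by hypothesis. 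Since both branch loci $B_{p_1}$ and $B_{p_2}$ have codimension $\geq 2$, both projections are isomorphisms, so $X(y)\cong E(y)$ is \'etale over $\Spec\Oc_{Y,y}$, contradicting $x\in B_\pi$. To salvage your outline you would have to replace the normalisation $Y'$ by an \'etale extension of the cover over each local ring, which is precisely what Grothendieck purity supplies and what your use of it for $B_g$ alone does not.
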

\begin{proof}
  Suppose $x$ is a maximal point in $B_\pi$ of height $\geq 2$ in $X$,
  and put $y=\pi(x)$.  We then have $\hto(y) \geq 2$ by the dimension
  inequality.  Let $X(y)^0= \pi^{-1}(\Spec \Oc_y \setminus \{\mf_y\})$
  and $X(y)= \pi^{-1}(\Spec \Oc_y)$.  Since $Y$ is pure at $y$ the
  restriction of $\pi$ to the morphism $\pi(y): X(y)^0\to \Spec \Oc_y$
  can be extended to an etale morphism $E(y)\to \Spec \Oc_y$.  Then
  clearly $E(y)$ is birational to $X(y)$.  The graph of the birational
  correspondence gives birational maps $p_1: Z \to E(y)$ and $p_2: Z
  \to X(y)$ where $\codim_{E(y)} B_{p_1}\geq 2$ and $\codim_{X(y)}
  B_{p_2}\geq 2$.  Since $E(y)$ is etale over $\Spec \Oc_y$ and $Y$
  satisfies $({\bf W})$, it follows that $E(y)$ satisfies $({\bf W})$,
  hence by van der Waerden's theorem $p_1$ is an isomorphism.  By
  assumption $X$ satisfies ({\bf W}), hence again by van der Waerden's
  theorem $p_2$ is an isomorphism.  This implies that $\pi$ is smooth
  at $x$, in contradiction to the assumption. Therefore there exist no
  maximal points in $B_\pi$ of height $\geq 2$.
\end{proof} \begin{theorem}\label{zariski-rel} Let $\pi: X/S \to Y/S$
  be of the type $(\Fb)$, where moreover $Y/S$ is smooth, $X$
  satisfies $(S_2)$, and $S$ is defined over the rational
  numbers. Then $\codim^+_X B_\pi \leq 1$.
\end{theorem}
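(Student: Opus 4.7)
The plan is to derive purity of $C_\pi$ from the $(S_2)$-criterion in \Theorem{\ref{purity of critical}}~(3), and then to upgrade purity of $C_\pi$ to purity of $B_\pi$ via Zariski's lemma, which is available because $S$ lies over $\Qb$. First I would normalise the fundamental sequence: since $\pi$ is generically separably algebraic, $d_{X/Y}=0$ and $\Omega_{X/Y}$ is torsion, so $B_\pi=\supp\Omega_{X/Y}$; since $Y/S$ is smooth and $\pi$ is generically smooth, the imperfection module $\Gamma_{X/Y/S}$ vanishes, leaving the short exact sequence
\begin{displaymath}
0\to\pi^*(\Omega_{Y/S})\to\Omega_{X/S}\to\Omega_{X/Y}\to 0
\end{displaymath}
with all three terms coherent.

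Next I would establish the $(S_2)$-hypothesis. Because $X$ is integral and $\Omega_{X/Y}$ is torsion, $T_{X/Y}=\Hom_{\Oc_X}(\Omega_{X/Y},\Oc_X)=0$, so $d\pi$ is injective in \eqref{fund-exact} and $\overline T_{X/S}=T_{X/S}=\Hom_{\Oc_X}(\Omega_{X/S},\Oc_X)$. A routine depth-lemma computation applied to the inclusion $0\to T_{X/S}\to\Oc_X^n\to\Oc_X^m$ obtained by dualising a local presentation of $\Omega_{X/S}$ shows that the $\Oc_X$-dual of a coherent module over an $(S_2)$-scheme is again $(S_2)$. Thus $\overline T_{X/S}$ satisfies $(S_2)$, and \Theorem{\ref{purity of critical}}~(3) gives $\codim_X^+ C_\pi\le 1$.

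Finally I would upgrade this to a bound on $B_\pi$. Fix $x\in X$ with $\hto(x)\ge 2$. By the previous step $x\notin C_\pi$, so $\Cc_{X/Y,x}=0$ and $d\pi$ is surjective on a Zariski neighbourhood of $x$; thus every tangent vector field on $Y$ lifts locally to a tangent vector field on $X$ near $x$. Because $Y/S$ is smooth and $S$ has characteristic zero, Zariski's lemma (in the form invoked in the Introduction) implies that $\pi$ is locally analytically trivial at $x$. Combined with $d_{X/Y}=0$ this yields $\widehat{\Oc}_{X,x}\cong\widehat{\Oc}_{Y,\pi(x)}$, whence $\Omega_{X/Y,x}=0$ by faithful flatness of completion, so $x\notin B_\pi$ and $\codim_X^+ B_\pi\le 1$.

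The technically most delicate point is the last one: one must verify that the Introduction's form of Zariski's lemma ($Y/S$ smooth, $C_\pi$ locally empty, $\Char X=0$ $\Rightarrow$ $\pi$ locally analytically trivial) applies at an individual $x\notin C_\pi$ without flatness of $\pi$, and that local analytic triviality with $d_{X/Y}=0$ indeed forces an isomorphism on completions. Everything else is formal depth-lemma bookkeeping and a direct application of \Theorem{\ref{purity of critical}}.
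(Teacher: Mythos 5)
Your plan follows the paper's strategy up to its last step: both arguments reduce to splitting $\pi^*(\Omega_{Y/S})_x\to \Omega_{X/S,x}$ at the offending point and then killing the torsion of $\Omega_{X/S,x}$ with the characteristic-zero Zariski--Lipman--Nagata machinery. Before the last step there is one quantifier slip you must repair: $\codim^+_X C_\pi\leq 1$ says that the \emph{maximal} points of $C_\pi$ have height $\leq 1$; since $C_\pi$ is closed it still contains every specialisation of those points, so ``fix $x$ with $\hto(x)\geq 2$; then $x\notin C_\pi$'' is false as stated. What you need, and what is true, is that a \emph{maximal point $x$ of $B_\pi$} of height $\geq 2$ cannot lie in $C_\pi$ (otherwise a maximal point of $C_\pi\subset B_\pi$, of height $\leq 1$, would properly generalise $x$ inside $B_\pi$). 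For such $x$, surjectivity of $d\pi_x$ onto the free module $\pi^*(T_{Y/S})_x$ is equivalent to a splitting $\Omega_{X/S,x}\cong \Oc_{X,x}^{d_{Y/S}}\oplus\Omega_{X/Y,x}$ with $\Omega_{X/Y,x}$ torsion; the paper obtains the same splitting instead from $Ext^1_{\Oc_{X,x}}(\Omega_{X/Y,x},\Oc_{X,x})=0$, using $(S_2)$ and $\codim \supp\Omega_{X/Y}\geq 2$ after localisation. Up to here the two arguments agree in substance.

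The genuine gap is the final step, which you yourself flag as delicate. The lifted vector fields $D_i$ and the pullbacks $f_j$ of the coordinates $y_j$ of $Y/S$ satisfy $D_i(f_j)=\delta_{ij}$, and in characteristic zero this yields a formal product decomposition of $\Oc_{X,x}$ along the ideal generated by those $f_j$ that lie in $\mf_x$ --- but these account only for the relative directions of $Y/S$ whose differentials survive in $\mf_y/\mf_y^2$. When $S$ is not a field (or when $\hto(y)<d_{Y/S}$) this leaves an uncontrolled residual factor of positive dimension over $S$, so no isomorphism $\hat{\Oc}_{X,x}\cong\hat{\Oc}_{Y,\pi(x)}$ can be read off; and even when the residual factor is Artinian one must show it is \emph{reduced}, which is essentially the unramifiedness being proved, so the deduction ``locally analytically trivial plus $d_{X/Y}=0$ implies an isomorphism of completions'' is circular. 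The paper finishes differently, and you should adopt its ending: having identified $\Omega_{X/Y,x}$ with the torsion submodule of $\Omega_{X/S,x}$ and shown $\Omega_{X/S,x}/\Omega_{X/S,x}^t$ locally free, it uses Nagata's remark on Zariski's paper together with the reducedness of the $(f_1,\dots,f_r)$-adic completion (this is where ``$X/S$ locally of finite type and integral'' enters) to conclude, as in the Zariski--Lipman--Nagata criterion, that $\Omega_{X/S,x}$ itself is free, hence torsion-free, hence $\Omega_{X/Y,x}=0$. No statement about $\hat{\Oc}_{Y,\pi(x)}$ is needed or available.
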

\begin{proof} If on the contrary there exists a maximal point of
  height $\geq 2$, after localisation one may assume that $\hto(x)\geq
  2$ for each maximal point $x$ in $B_\pi= \supp \Omega_{X/Y}$.  Since
  $\Omega_{Y/S}$ is locally free the exact sequence (\ref{eq:can-ex})
  can be complemented with $0 \to $ to the left; each maximal point of
  $B_\pi$ is of height $\geq 2$; $\pi^*(\Omega_{Y/S})$ is locally
  free, and $X$ satisfies $(S_2)$; hence (\ref{eq:can-ex}) is locally
  split exact; hence the quotient $\Omega_{X/S}/\Omega_{X/S}^t$ by the
  torsion sub-module is locally free.  It follows that $T_{X/S}$ is
  locally free (of finite rank) and that the canonical map
  $\Omega_{X/S}\to T_{X/S}^*$ is surjective (see e.g. \cite[Lem.
  2]{lipman:jacobian}). Therefore, by \cite{nagata:remark-zariski} (see\cite[\S3, p.
  880]{lipman:freeder}), for any point $x$ in $X$ there exist elements
  $\{f_1, f_2, \dots , f_r\}$ in $\Oc_{X,x}$ and derivations
  $\partial_1,
  \partial_2, \dots , \partial_r$ ($r= \rank
  \Omega_{X/S,x}/\Omega_{X/S,x}^t$) such that $\det \partial_i(f_j)$
  is invertible.  Imitating the proof of the Zariski-Lipman-Nagata
  regularity criterion; see \cite[]{lipman:freeder},\cite[Th 30.1 and
  its Corollary]{matsumura}, noting that the completion of the local
  ring $\Oc_{X,x}$ along the ideal $(f_1, f_2, \dots, f_r)$ is reduced
  since $X/S$ is locally of finite type and integral, it follows that
  $\Omega_{X/S,x}$ is free, so $\Omega_{X/S,x}^t=0$; hence
  $\Omega_{X/Y,x}=0$, contradicting the assumption that $x\in
  B_\pi$. Therefore $B_\pi $ contains no maximal points of height
  $\geq 2$.
\end{proof}

\begin{theorem}\label{simult} Let $\pi: X/S \to Y/S$ be a morphism of the type $(\Fb)$. Make also  the assumptions:
  \begin{enumerate}[label=(\theenumi), ref=(\theenumi)]
  \item $X$ satisfies $(S_3)$, $X/S$ is d.c.i., and $\codim^-_XB_
    {X/S}\geq 2$.
  \item $\Omega_{Y/S,y}$ is free for $y\in D_\pi$ such that $\hto
    (x)\leq 2$ when $y=\pi(x)$.
  \item\label{imperf-cond} $\pdo \Vc_{X/Y/S,x} \leq 1$ for each point
    $x$.
  \end{enumerate}
  Then $\codim^+_X B_\pi \leq 1$.
\end{theorem}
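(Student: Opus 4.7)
The plan is to argue by contradiction: suppose $x$ is a maximal point of $B_\pi$ with $\hto(x)\geq 2$, localize at $x$, and derive a contradiction via Auslander--Buchsbaum. The setup step is to observe that $d_{X/Y}=0$ since $\pi$ is generically separably algebraic, so $\Omega_{X/Y}$ is torsion; combined with the maximality of $x$ in $B_\pi=\supp \Omega_{X/Y}$, this forces the localization $\Omega_{X/Y,x}$ to be supported only at $\mf_x$, whence $\depth_{\Oc_{X,x}} \Omega_{X/Y,x}=0$.

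Next I would bound $\pdo \Omega_{X/Y,x}$ from above. Because $X/S$ is d.c.i., $\pdo \Omega_{X/S,x}\leq 1$, and by hypothesis \ref{imperf-cond} also $\pdo \Vc_{X/Y/S,x}\leq 1$, so feeding these into \eqref{eq:can-ex2} yields $\pdo \Omega_{X/Y,x}\leq 2$. Since this is finite, Auslander--Buchsbaum applies and gives $\pdo \Omega_{X/Y,x}=\depth \Oc_{X,x}$. If $\hto(x)\geq 3$, then $(S_3)$ supplies $\depth \Oc_{X,x}\geq 3$, contradicting the bound $\leq 2$. So one is left with the delicate case $\hto(x)=2$, in which the naive estimate matches $\depth \Oc_{X,x}=2$ exactly and the contradiction is just barely missed; I must sharpen the upper bound to $\pdo \Omega_{X/Y,x}\leq 1$.

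The sharpening will use hypothesis (2). Since $\pi(x)\in D_\pi$ and $\hto(x)\leq 2$, the stalk $\Omega_{Y/S,\pi(x)}$, hence $\pi^*(\Omega_{Y/S})_x$, is free. In \eqref{eq:can-ex1} the submodule $\Gamma_{X/Y/S,x}$ is then a first syzygy of $\Vc_{X/Y/S,x}$ (which has pdim $\leq 1$) inside a free module, so by Schanuel $\Gamma_{X/Y/S,x}$ is stably free, hence free. But $X/Y$ is generically smooth, so Cartier's equality gives generic rank zero for $\Gamma_{X/Y/S}$; being locally free of rank zero it vanishes, so $\Vc_{X/Y/S,x}\cong \pi^*(\Omega_{Y/S})_x$ is free. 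Feeding this back into \eqref{eq:can-ex2} and using $\pdo \Omega_{X/S,x}\leq 1$ now forces $\pdo \Omega_{X/Y,x}\leq 1$, contradicting the Auslander--Buchsbaum value of $2$ and completing the proof.

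The principal obstacle I expect is exactly this height-$2$ sharpening, where two bounds that are a priori equal must be made to separate by one; this depends essentially on the simultaneous use of both exact sequences \eqref{eq:can-ex1} and \eqref{eq:can-ex2} together with the generic vanishing of $\Gamma_{X/Y/S}$. The hypothesis $\codim_X^- B_{X/S}\geq 2$ does not visibly enter the skeleton above; I would expect it serves to keep $\Omega_{X/S}$ locally free at the codimension-one primes of $\Spec \Oc_{X,x}$, so that $\Vc_{X/Y/S}$ inherits good behaviour on the punctured spectrum and the syzygy-theoretic reasoning is not perturbed at the neighbouring points.
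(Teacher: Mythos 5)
Your proof is correct, and it takes a genuinely different route from the paper's. The paper argues globally: it forms the Auslander--Bridger transposes $D(\Vc_{X/Y/S})$, $D(\Omega_{X/S})$, $D(\Omega_{X/Y})$, assembles the biduality four-term sequences into a large commutative diagram, applies the snake lemma (using that the biduality map $h$ on the torsion module $\Omega_{X/Y}$ vanishes) to sandwich $\Omega_{X/Y}$ between $\Coker\bigl(\Ext^1(D(\Vc_{X/Y/S}),\Oc_X)\to \Ext^1(D(\Omega_{X/S}),\Oc_X)\bigr)$ and $\Ker\bigl(\Ext^2(D(\Vc_{X/Y/S}),\Oc_X)\to\cdots\bigr)$, and then kills both ends by codimension-of-support estimates combined with $(S_2)$ and $(S_3)$. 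Your argument instead localizes entirely at one maximal point $x$ of $B_\pi$ of height $\geq 2$, where $\Omega_{X/Y,x}$ has finite length, and plays the Auslander--Buchsbaum equality $\pdo\Omega_{X/Y,x}=\depth\Oc_{X,x}$ against the upper bound $\pdo\Omega_{X/Y,x}\leq 2$ coming from hypotheses (1) and (3); the residual height-$2$ case is then handled by your Schanuel argument, which makes $\Gamma_{X/Y/S,x}$ free, hence zero (it is torsion by generic smoothness of $X/Y$ --- note this is exactly the fact the paper records just before the theorem, and is the cleaner justification here rather than Cartier's equality, whose statement in the paper carries a perfectness hypothesis on $k(S)$), whence $\Vc_{X/Y/S,x}$ is free and $\pdo\Omega_{X/Y,x}\leq 1$. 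The paper's implicit route to ``$\Vc_{X/Y/S,x}$ free when $\hto(x)\leq 2$'' is the same Schanuel-type step, so no new input is needed there. What your approach buys is economy and transparency: it is elementary, and it exposes that the hypothesis $\codim^-_X B_{X/S}\geq 2$ is not needed for the stated conclusion (the paper uses it only to force $\Ext^1_{\Oc_X}(D(\Omega_{X/S}),\Oc_X)=0$, a step your argument bypasses entirely). What the paper's approach buys is the global exact sequence identifying $B_\pi$ with $\supp\Coker a\cup\supp\Ker b$, which carries more structural information about $\Omega_{X/Y}$ than a pointwise contradiction.
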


\begin{remark}
  Note that $\Gamma_{X/Y/S}= 0$ when $X/Y$ is locally of finite type and
  d.c.i. \Prop{\ref{gamma0}}, but this case is superseded by
  \Theorem{\ref{purity of critical}, \ref{no5}}. Hence
  \Theorem{\ref{simult}} is useful when one instead of knowing $X/Y$
  be d.c.i., which may be hard to get when $Y/S$ is not d.c.i. (see
  \Theorem{\ref{purity-br-loc}}). On the other hand, assume that
  $\pdo \pi^*(\Omega_{Y/S}) \leq 1$; this holds when $Y/S$ is d.c.i. and
  $F_1(\pi^*(\Omega_{Y/S})) = \Oc_XF_1(\Omega_{Y/S})$. Then \ref{imperf-cond}
  holds if $\Gamma_{X/Y/S}$ is locally free or if the canonical map
  $\delta: \Gamma_{X/X_r/S}\to \Gamma_{X/X_r/Y} $ is injective; the latter holds if
  $I/I^2 \to \Gamma_{X/X_r/Y} $ is injective; see the proof of
  \Proposition{\ref{gamma0}}.
\end{remark}
Note that since $\pi$ is generically smooth, $\Gamma_{X/Y/S}$ is
torsion, so $\Vc_{X/S}^* = T_{X/S\to Y/S}$.

\begin{pfof}{\Theorem{\ref{simult}}} Assume that $\codim^+_X B_\pi
  \geq 2 $, so after localisation we may assume that each maximal
  point $x$ if $B_\pi$ has height $\hto (x) \geq 2$.  Since $\pi$ is
  generically separable $Hom_{\Oc_X}(\Cc_{X/Y}, \Oc_X)$ $=0$, we have
  from (\ref{fund-exact}) the commutative diagram
  \begin{align*}\hskip -1.5cm \xymatrix{
      0\ar[r]&Hom_{\Oc_X}(T_{X/S \to Y/S}, \Oc_X) \ar[r]\ar[d]^\alpha & Hom_{\Oc_X}(T_{X/S}, \Oc_X)\ar[r]\ar[d]^\beta & \Lambda_{X/Y} \ar[r]\ar[d]^\gamma&0\\
      0\ar[r]& Hom_{\Oc_X}({\overline T}_{X/S}, \Oc_X)\ar[r]&
      Hom_{\Oc_X}(T_{X/S}, \Oc_X)\ar[r]& Hom_{\Oc_X}(T_{X/Y}, \Oc_X)}
  \end{align*}
  where ${\overline T}_{X/S} = \Imo (d\pi)$ and $\Lambda_{X/Y}$ is
  defined by the diagram.  Here $\beta$ is the identity and by the
  serpent lemma $ \Ker (\gamma) =\Coker (\alpha)=
  Ext^1_{\Oc_X}(\Cc_{X/Y}, \Oc_X)$. Hence we have the exact sequence
  \begin{displaymath}
    0 \to Ext^1_{\Oc_X}(\Cc_{X/Y}, \Oc_X) \to \Lambda_{X/Y} \to  Hom_{\Oc_X}(T_{X/S}, \Oc_X).
  \end{displaymath}

  As a maximal point $x$ of $C_\pi$ is maximal also in $B_\pi$
  \Th{\ref{purity of critical}}, we have $\hto (x)\geq 2$, hence
  $Ext^1_{\Oc_X}(\Cc_{X/Y}, \Oc_X)=0$, since $X$ satisfies $(S_2)$. It
  is now straightforward to see that we get the following commutative
  diagram (see \Remark{\ref{remark:transpose}}):
  \begin{align}\hskip -2cm \xymatrix{&
      Ext^2_{\Oc_X}(D(\Vc_{X/Y/S}),\Oc_X)&
      Ext^2_{\Oc_X}(D(\Omega_{X/S}), \Oc_X)&
      Ext^2_{\Oc_X}(D(\Omega_{X/Y}), \Oc_X)&
      \\
      0 \ar[r]& \ar[u]Hom_{\Oc_X}(T_{X/S \to Y/S}, \Oc_X)\ar[r]&\ar[u]
      Hom_{\Oc_X}(T_{X/S},\Oc_X)\ar[r]&
      Hom_{\Oc_X}(T_{X/Y},\Oc_X)\ar[u]&  \\
      0 \ar[r]& \Vc_{X/Y/S} \ar[r]\ar[u]& \Omega_{X/S} \ar[u]\ar[r]&
      \Omega_{X/Y}\ar[r]\ar[u]^h& 0 \\
      & Ext^1_{\Oc_X}(D(\Vc_{X/Y/S}), \Oc_X)\ar[u]&
      Ext^1_{\Oc_X}(D(\Omega_{X/S}), \Oc_X) \ar[u]&Ext^1_{\Oc_X}(
      D(\Omega_{X/Y}), \Oc_X).\ar[u]& }
  \end{align}
  Since $\pi$ is generically separably algebraic, so $h=0$, by the
  serpent lemma one has the exact sequence
  \begin{multline*} 0 \to Ext^1_{\Oc_X}(D(\Vc_{X/Y/S}), \Oc_X)
    \xrightarrow{a}
    Ext^1_{\Oc_X}(D(\Omega_{X/S}), \Oc_X)\to \Omega_{X/Y}\to \\
    \to Ext^2_{\Oc_X}(D(\Vc_{X/Y/S}), \Oc_X) \xrightarrow{b}
    Ext^2_{\Oc_X}(D(\Omega_{X/S}), \Oc_X),
  \end{multline*}
  whence $B_\pi = \supp \Coker a \ \cup\ \supp \Ker b$.  By
  \ref{imperf-cond}, $D(\Vc_{X/Y/S})$ is projectively equivalent to \\ $
  Ext^1_{\Oc_X}(\Vc_{X/Y/S}, \Oc_X)$, and since $\Omega_{Y/S,y}$ is
  free at points $y$ when $y=\pi(x)$ and $\hto(x)\leq 2$, and
  therefore $\Vc_{X/Y/S,x}$ is free when $\hto(x)\leq 2$. Therefore
  \begin{displaymath}
  \codim_X^- \supp D(\Vc_{X/Y/S}) \geq 3;
\end{displaymath}
hence
  \begin{displaymath}
    Ext^2_{\Oc_X}(D(\Vc_{X/Y/S}), \Oc_X) =0
\end{displaymath}
since $X$ satisfies
  $(S_3)$; hence $\Ker b = 0$.  Similarly, since $X/S$ is d.c.i. and
  $\Omega_{X/S,x}$ is free at points $x$ with $\hto(x)\leq 1$, we get
  \begin{displaymath}
    Ext_{\Oc_X}^1(D(\Omega_{X/S}), \Oc_X)=
    Ext_{\Oc_X}^1(Ext_{\Oc_X}^1(\Omega_{X/S}, \Oc_X), \Oc_X)= 0;
  \end{displaymath}
  hence $\Coker a = 0$. Therefore $\Omega_{X/Y}=0$.

  If $\Gamma_{X/Y/S}=0$, the last assertion in \ref{imperf-cond} in
  \Theorem{\ref{simult}} follows from \Lemma{\ref{diffcomplete-lem}}.
  (If $B_{Y/S}=\emptyset$, it is evident that $\Gamma_{X/Y/S}=0$.)
\end{pfof}
\begin{remark}
  If $X/Y$ is a d.c.i., $X$ satisfies $(S_3)$, and each maximal point
  of $B_\pi$ is of height $\geq 2$, then $\Omega_{X/Y}$ is reflexive
  (it is actually locally free \Th{\ref{purity of
      critical},\ref{no5}}).  This follows since $D(\Omega_{X/Y})$ is
  locally projectively equivalent to $ Ext^1_{\Oc_X}(\Omega_{X/Y},
  \Oc_X)$, implying $\codim^+_X D(\Omega_{X/Y}) \geq 3$ since
  $\Omega_{X/Y}$ is free at points of height $\leq 2$; hence
  $Ext^1_{\Oc_X}(D(\Omega_{X/Y}),\Oc_X)=Ext^2_{\Oc_X}(D(\Omega_{X/Y}),\Oc_X)=0$.
\end{remark}

We state an easily workable excerpt of the above results that may
prove useful to get that a morphism is \'etale.

\begin{corollary}\label{cor-pure} Let $\pi: X/S \to Y/S$ be a morphism of integral N\oe therian $S$-schemes
  which is locally of finite type generically separably algebraic and
  dominant. Assume moreover that $Y$ is normal, and that at points of
  height $\leq 1$, $X$ is regular and $X/S$ is smooth.  The following
  are equivalent:
  \begin{enumerate}[label=(\theenumi), ref=(\theenumi)]
  \item\label{o} $\pi$ is submersive.
  \item\label{oo} $\pi$ is submersive at points of height $1$.
  \item\label{ooo} there exists an exceptional set $E\subset X$ of
    codimension $\geq 2$ such that the restriction $X\setminus E \to
    Y\setminus \pi(E)$ is formally \'etale.
  \end{enumerate}
  Make one of the following additional assumptions:
  \begin{enumerate}[label=(\alph*)]
  \item $\pi$ is birational and locally of finite type, $X$ and $Y$
    are normal, and $Y$ satisfies the condition $(\Wb)$.
  \item $\pi$ is quasi-finite, $X$ is normal and $Y$ is regular.
  \item The assumptions in \Theorem{\ref{zariski-rel}} or
    \Theorem{\ref{simult}} hold.
  \item $\pi$ is a d.c.i. (see e.g.
    \Lemmas{\ref{drsci-smooth}}{\ref{drci-flat}}).
  \item $X/S$ and $Y/S$ are d.c.i and $\beta_1(\Omega_{X/Y})\leq 1$.
  \end{enumerate} Then it follows that $(1-3)$ are equivalent to:
  \begin{enumerate} [label=(4)]
  \item $\pi$ is \'etale.
  \end{enumerate}
\end{corollary}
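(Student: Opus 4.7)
The plan is to establish a cycle $(1)\Rightarrow(2)\Rightarrow(3)\Rightarrow(1)$ under the standing hypotheses, and then to deduce the additional equivalence with (4) separately under each of the supplementary assumptions (a)--(e).

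The implication $(1)\Rightarrow(2)$ is immediate. For $(2)\Rightarrow(3)$ I would take $E=B_\pi$ and prove $\codim^-_X E\geq 2$. Since $\pi$ is generically separably algebraic the generic point is not in $B_\pi$. At a height-$1$ point $x$, the hypothesis gives $X$ regular (so $\Oc_{X,x}$ is a DVR) and $\Omega_{X/S,x}$ free, hence \ref{no1} of \Theorem{\ref{purity of critical}} applies and yields $\Cc_{X/Y,x}=Ext^1_{\Oc_{X,x}}(\Omega_{X/Y,x},\Oc_{X,x})$. Because $d_{X/Y}=0$, the module $\Omega_{X/Y,x}$ is torsion over the DVR $\Oc_{X,x}$, and for such modules the vanishing of $Ext^1(-,\Oc_{X,x})$ is equivalent to the vanishing of the module itself. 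Hence (2) forces $x\notin B_\pi$, so coherence of $\Omega_{X/Y}$ makes $B_\pi$ closed with all maximal points of height $\geq 2$. On the complement $U=X\setminus B_\pi$ the sheaf $\Omega_{U/Y}$ vanishes; combined with generic separable algebraicity and the smoothness of $X/S$ at height-$1$ points, this upgrades formal unramifiedness to formal \'etaleness on $U$.

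For $(3)\Rightarrow(1)$, formal \'etaleness off $E$ gives $\Omega_{X/Y}=0$ on $X\setminus E$ and hence $\Cc_{X/Y}=0$ there, so $C_\pi\subset E$ is supported in codimension $\geq 2$. I would then invoke the first clause of \ref{no3} of \Theorem{\ref{purity of critical}}, which yields $\codim^+_X C_\pi\leq 1$, forcing $C_\pi=\emptyset$. The $(S_2)$-hypothesis on $\overline T_{X/S}$ needed for that clause is verified by observing that off $E$ the image $\overline T_{X/S}$ coincides with the dual module $T_{X/S\to Y/S}=\Homo_{\Oc_X}(\pi^*(\Omega_{Y/S}),\Oc_X)$, which enjoys depth $\geq 2$ at the relevant points of $E$; together with the codimension-$\geq 2$ support of $\Cc_{X/Y}$ this propagates $(S_2)$ to $\overline T_{X/S}$ globally.

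For the additional equivalence with (4), each of (a)--(e) already supplies, in its own setting, a purity bound $\codim^+_X B_\pi\leq 1$: (a) is van der Waerden's theorem, (b) is Zariski--Nagata--Auslander, (c) is \Theorem{\ref{zariski-rel}} or \Theorem{\ref{simult}}, (d) invokes \ref{no5} of \Theorem{\ref{purity of critical}} via \Lemma{\ref{drsci-smooth}}, and (e) is \Theorem{\ref{purity-br-loc}}, (1)--(2). Combining any such bound with the height-$1$ vanishing obtained from (2) eliminates all maximal points of $B_\pi$, giving $B_\pi=\emptyset$; formal unramifiedness together with the flatness built into each scenario (miracle flatness for finite morphisms into regular/normal targets, or the d.c.i.\ consequences in (d)--(e)) then promotes this to \'etaleness. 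The principal obstacle is verifying $(S_2)$ for $\overline T_{X/S}$ in the step $(3)\Rightarrow(1)$, which is precisely the reason the argument routes through (3) rather than trying $(2)\Rightarrow(1)$ directly.
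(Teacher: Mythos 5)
Your proposal follows the paper's own proof essentially verbatim: the equivalence of (1)--(3) is reduced to part (3) of Theorem~\ref{purity of critical} (you merely re-derive its height-one clause by the DVR/$\Ext^1$ argument instead of citing it, and you carry the same unverified $(S_2)$ hypothesis on $\overline T_{X/S}$ that the paper leaves implicit), and the passage to (4) is obtained case by case from van der Waerden, Zariski--Nagata--Auslander, Theorems~\ref{zariski-rel}/\ref{simult}, part (5) of Theorem~\ref{purity of critical}, and Theorem~\ref{purity-br-loc}, exactly as in the paper. The one justification to correct is the upgrade from unramified to \'etale on $X\setminus B_\pi$: the operative ingredient is the normality of $Y$ (via \cite[Cor.~18.10.3]{EGA4:4}), which is what the paper invokes, rather than generic separable algebraicity and smoothness of $X/S$ at height-one points.
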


\begin{remark}
  It is not sufficient to know $\codim^+_X B_\pi \leq 2$ to get (4)
  from \ref{o}, as seen from the following well-known example: If $\pi
  : X=\Spec k[s,t] \to Y=\Spec k[s^2, st, t^2] $, then $B_\pi =
  \{(s,t)\}$ and $C_\pi = \emptyset$. Here $Y/k$ is d.c.i.  and $X/k$
  is smooth, but $\pi$ is not d.c.i. Assume that all morphisms
  $X/S,Y/S$, and $X/Y$ are locally of finite type and assume that
  $\pi: X/S \to Y/S$ is generically smooth. Let $B_{X/S}$ be the
  branch locus of $X/S$, so $B_{X/S} \subset X^s$, where $X^s$ is the
  locus of points where $X/S$ fails to be smooth, and $X^s = B_{X/S} $
  when $X/S$ is flat \cite[\S17]{EGA4:4}.  It is easy to see that if
  $Y/S$ is smooth, then $B_{X/S}\subset B_\pi$; but $B_{X/S}$ need not
  be contained in $C_\pi$, by the above example.
\end{remark}

\begin{proof} $\ref{o}\Leftrightarrow \ref{oo}$: follows from (3) in
  \Theorem{\ref{purity of critical}}.  $\ref{ooo}\Rightarrow \ref{oo}$
  is clear.  $\ref{oo}\Rightarrow \ref{ooo}$: If $x\in X$ is a point
  of height $1$, then $\Omega_{X/Y,x}=0$ by (3) in
  \Theorem{\ref{purity of critical}}; since $Y$ is normal it follows
  that $\Oc_{Y,y} \to \Oc_{X,x}$ is \'etale when $\hto (x)\leq 1$
  \cite[Cor.  18.10.3]{EGA4:4}.  $(4)\Rightarrow \ref{ooo}$ is
  evident.  $\ref{o}\Rightarrow (4)$: Since $Y$ is normal, $\pi$ is
  \'etale if and only if $\Omega_{X/Y}=0$. By \ref{o} $C_\pi =
  \emptyset$, hence by \Theorem{\ref{purity of critical}, (3),}
  $B_\pi$ contains no points of height $1$. Therefore $\pi$ is \'etale
  if $\codim^+_X B_\pi \leq 1$.  In (a) this follows from van der
  Waerden's theorem, $(b)$ follows from the Zariski-Nagata-Auslander
  theorem, $(d)$ follows from (5) in \Theorem{\ref{purity of
      critical}}. $(e)$ follows from \Theorem{\ref{purity-br-loc}}.
\end{proof}

Stay with the case $d_{X/Y}=0$. It is natural to fix the general type
of morphism as in (\Fb) but one can make different assumptions on the
source $X$. Let $\Cc_S$ be a class of integral and N\oe therian
schemes over $S$.
\begin{definition}\label{pure-scheme} A N\oe therian scheme $Y/S$ is
  {\it strongly (weakly) $\Cc_S$-pure}\/ if for any morphism of the
  type $({\bf F})$ we have $\codim^+_X B_\pi\leq 1$ ($\codim^+_X C_\pi
  \leq 1$) when $X/S$ belongs to $\Cc_S$.
\end{definition}

Let $\Cc^{2}_S$ be the category of integral schemes $X/S$ locally of
finite type satisfying $(S_2)$, and $\Cc^s_{S}$ the category of smooth
schemes. For example, \Theorem{\ref{purity of critical}} implies that
all N\oe therian integral schemes $Y/S$ are weakly $\Cc^{2}_S$-pure,
and that smooth schemes are strongly $\Cc^s_{S}$-pure.

It should not be a big surprise that $\codim^+_X C_\pi \leq 1$ holds
under weaker conditions than those needed to get $\codim^+_X B_\pi
\leq 1$. For example, Griffith \cite{griffith:ram} has examples where
$B_\pi$ does not satisfy $\codim^+_X B_\pi \leq 1$ for finite
morphisms $\pi:X/S \to Y/S$ of the type $({\bf F})$, when $X$ is
normal and $Y$ Gorenstein, but we know that $Y/S$ is weakly
$\Cc^2_S$-pure, so $\codim^+_X C_\pi \leq 1$.

\bigskip \textsc{Department of Mathematics, University of G\"avle,
  S-801 76, Sweden}

{\it E-mail address}\/: \texttt{rkm@hig.se}.

\printbibliography
\end{document}